\theoremstyle{definition}
\newtheorem{thm}{Theorem}[section]
\newtheorem{cor}[thm]{Corollary}
\newtheorem{prop}[thm]{Proposition}
\newtheorem{lem}[thm]{Lemma}
\newtheorem{defn}[thm]{Definition}
\newtheorem{exmp}[thm]{Example}
\newtheorem{rmk}[thm]{Remark}
\newtheorem{obs}[thm]{Observation}
\newtheorem*{ack*}{Acknowledgements}
\subjclass[2010]{06A07(primary), 52B20(secondary)}
\begin{document}

\title{Geometric Symmetric Chain Decompositions}

\author{Stefan David}
\address{University of Cambridge, Trinity College, Trinity Street, CB21TQ}
\email{sd637@cam.ac.uk}
\thanks{The first author would like to thank Trinity College, Cambridge, for providing travel funding.}

\author{Hunter Spink}
\address{Harvard University, Cambridge, 1 Oxford Street, 02138}
\email{hspink@math.harvard.edu}
\thanks{The second author would like to thank Harvard University for providing travel funding.}

\author{Marius Tiba}
\address{University of Cambridge, Trinity Hall, Trinity Lane, CB21TJ}
\email{mt576@cam.ac.uk}
\thanks{The third author would like to thank Trinity Hall, Cambridge, for support through the Trinity Hall Research Studentship.}

\begin{abstract}
We create a framework for studying symmetric chain decompositions of families of finite posets based on the geometry of polytopes. Our framework unifies almost all known results regarding symmetric chain decompositions of the Young posets $L(m,n)$ --- arising as cells in the Bruhat decomposition of quotients of $SL_{m+n+1}$ --- and yields unexpected new results. The methods we provide are geometric in nature, systematic, and totally amenable to human analysis. This allows us to discover new phenomena which are impenetrable to casework and brute force computer search. In particular, our method yields perfect and near perfect decompositions of various families of posets, which are intractable by known methods. A fundamental tool we use is geometrical projection, which in our framework cleanly unifies many different types of induction; as we move a point from which we project between faces of our polytope, we alter the type of induction. Moreover, projection allows us to decrease dimension and therefore obtain a clear geometric intuition. We also provide additional tools for producing decompositions, and discuss how the various decompositions behave under products.
\end{abstract}

\maketitle

\section{Introduction}
In 1970, in his seminal paper \cite{Kleitman}, Kleitman gave perhaps the first non-trivial application of symmetric chain decompositions. There, he used a recursive construction of symmetric chain decompositions of the hypercube $\{0,1\}^n$ to solve the Littlewood-Offord problem on concentrations of sums of Bernoulli random variables in arbitrary dimensions. Since then, a large number of interesting rank-symmetric posets have been shown to admit symmetric chain decompositions and a plethora of necessary conditions and sufficient conditions for such decompositions to exist have appeared in the literature (see e.g. Griggs \cite{Griggs} and Stanley \cite{Stanley2}).

Recently, focus in the area has shifted to using methods of Algebraic Geometry to construct new such posets and decompositions (see e.g. Dhand \cite{dhand2}). The first results in this direction started in the groundbreaking paper of Stanley \cite{stanley}. Posets arising from these constructions typically have the form $\{0,1\}^n/G$, for $G$ a group acting on the hypercube. In this direction, the only known results treat special group actions such as the natural action of $\mathbb{Z}/n\mathbb{Z}$ as in Dhand, Hersh-Schilling and Jordan \cite{Dhand, Hersh, Jordan}, or generalizations as in Duffus \cite{Duffus}. However, the posets considered in Stanley's original paper, which have a more geometrical interpretation, remain largely open as to the existence of symmetric chain decompositions. These are the Young posets $L(m,n)$, which are a central object of study in the present paper (called such because they arise from the poset of Young diagrams under the natural inclusion relation). Interest in symmetric chain decompositions of the Young posets arose from Stanley's reinterpretation of the posets as labelings of the cells in certain cellular decompositions (induced by the Bruhat decomposition) of quotients of $SL_{m+n+1}$ by certain maximal parabolic subgroups \cite{stanley}. By applying the Lefschetz hyperplane theorem, a complexified analogue of a symmetric chain decomposition was shown to exist for all $m$ and $n$, bolstering evidence that these posets admit symmetric chain decompositions.

Perhaps the only general result on symmetric chain decompositions of Young posets is due to O'Hara \cite{Ohara}, giving an explicit combinatorial proof of unimodality. Lindstrom, West, and Wen \cite{ Lindstrom,west,xiangdong} give various decompositions of $L(m, n)$ for $m \le 4$; however, decomposing $L(m,n)$ for $m \ge 5$ remains a wide open problem.

The natural geometric interpretation of the posets $L(m,n)$ has never been properly exploited as previous ad-hoc algebraic methods of construction and verification are difficult without computer assistance. This paper provides a bridge between the combinatorics of symmetric chain decompositions and the geometry of polytopes.

A good place to start would be to describe our natural geometric interpretation of $L(m,n)$. Classically, we may view it as the set of $m$-tuples of integers $0 \le x_1 \le \ldots \le x_m \le n$, under the partial ordering $(x_i) \le (y_i)$ if $x_i \le y_i$ for all $i$. Note that if we define the polytope $$L(m)=\{(x_1,\ldots,x_m) \in \mathbb{R}^m \mid 0 \le x_1 \le \ldots \le x_m \le 1\},$$ endowed with the partial ordering $(x_i) \le (y_i)$ if $x_i \le y_i$ for all $i$, then $$L(m,n)=L(m) \cap \frac{1}{n} \mathbb{Z}^m.$$

This already yields the observation that for any $k \in \mathbb{N}$, $L(m,n)$ naturally lies inside $L(m,kn)$ by treating them as subposets of $L(m)$, in a way that preserves the notion of ``symmetry about the middle rank'' given by their respective rank functions. Consequently, we could consider the compatibility of their symmetric chain decompositions. Before pursuing this idea, let's frame our discussion in a more general context.

Given a polytope $P$, we define $P(n)$ to be the poset whose elements are the points of denominator $n$ contained inside $P$, with the partial order $(x_i) \le (y_i)$ if $x_i \le y_i$ for all $i$. In the above discussion, we had $P=L(m)$, and so $L(m)(n)=L(m,n)$. The most natural compatibility condition between symmetric chain decompositions of $P(n)$ and $P(kn)$ one might impose is that the symmetric chain decomposition of $P(n)$ is the restriction of the symmetric chain decomposition of $P(kn)$. It turns out that from a geometric perspective, this condition is quite natural, as we shall see below.

As an example, take $P$ to be either a $4\times 6$ rectangle or a $4\times 4$ right triangle, axis-aligned with the bottom left point at the origin and consider the poset $P(1)$ as depicted in Figure 1. We indicate in thick black lines one possible symmetric chain decomposition of each of these posets.
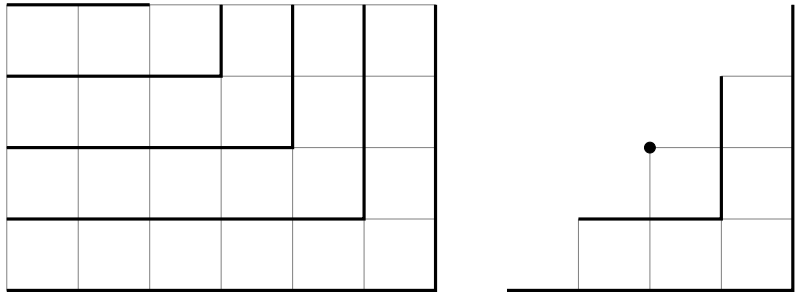
\begin{figure}[H]
\centering
\begin{tikzpicture}[scale=0.95]
\draw[step=1cm,gray,very thin] (0,0) grid (6,4);
\draw[very thick] (0,0)--(6,0)--(6,4);
\draw[very thick] (0,1)--(5,1)--(5,4);
\draw[very thick] (0,2)--(4,2)--(4,4);
\draw[very thick] (0,3)--(3,3)--(3,4);
\draw[very thick] (0,4)--(2,4);
\draw[gray,very thin] (7,0)--(11,0);
\draw[gray,very thin] (8,1)--(11,1);
\draw[gray,very thin] (9,2)--(11,2);
\draw[gray,very thin] (10,3)--(11,3);
\draw[gray,very thin] (11,0)--(11,4);
\draw[gray,very thin] (10,0)--(10,3);
\draw[gray,very thin] (9,0)--(9,2);
\draw[gray,very thin] (8,0)--(8,1);
\draw[very thick] (7,0)--(11,0)--(11,4);
\draw[very thick] (8,1)--(10,1)--(10,3);
\draw[black,fill=black] (9,2) circle (0.5ex);
\end{tikzpicture}
\caption{Symmetric chain decompositions of $P(1)$ for a $4 \times 6$ rectangle and $4 \times 4$ right triangle.}
\end{figure}

Suppose we want to decompose $P(4)$ next. Using Figure 1 as inspiration, we can produce the decompositions depicted in Figure 2.

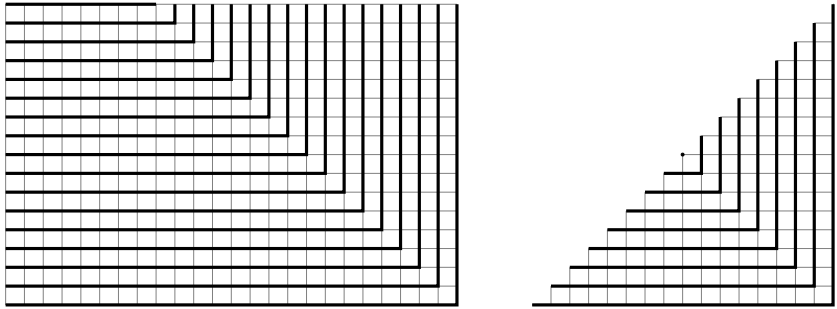
\begin{figure}[H]
\centering
\begin{tikzpicture}
\draw[step=0.25cm,gray,very thin] (0,0) grid (6,4);
\foreach \x in {0,1,...,16}
{
\draw[very thick] (0,\x/4)--(6-\x/4,\x/4)--(6-\x/4,4);
}

\foreach \x in {0,1,...,16}
{
\draw[gray,very thin] (7+\x/4,\x/4)--(11,\x/4);
\draw[gray,very thin] (11-\x/4,0)--(11-\x/4,4-\x/4);
}
\foreach \x in {0,1,...,8}
{
\draw[very thick] (7+\x/4,\x/4)--(11-\x/4,\x/4)--(11-\x/4,4-\x/4);
}
\draw[black,fill=black] (9,2) circle (0.125ex);

\end{tikzpicture}
\caption{Symmetric chain decompositions of $P(4)$ for a $4 \times 6$ rectangle and $4 \times 4$ right triangle.}
\end{figure}

Note that our inspiration has naturally led to the above compatibility condition being satisfied between $P(1)$ and $P(4)$. It is tempting to try and encode the pattern which allows us to produce symmetric chain decompositions of the various posets $P(n)$ by sight, as depicted in Figure 3.
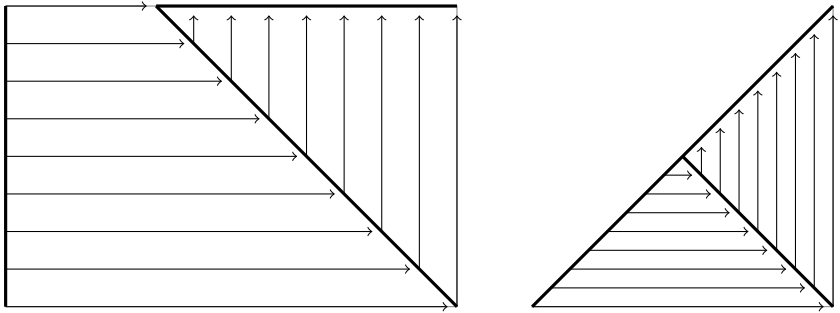
\begin{figure}[H]
\centering
\begin{tikzpicture}
\draw[gray, very thin] (0,0)--(6,0)--(6,4)--(0,4)--cycle;
\draw[very thick] (2,4)--(6,0);
\draw[very thick] (0,0)--(0,4);
\draw[very thick] (2,4)--(6,4);
\foreach \x in {0,1,2,3,4,5,6,7,8}
{
\draw [->, shorten >=0.125cm] (0,\x/2)--(6-\x/2,\x/2);
}
\foreach \x in {1,2,3,4,5,6,7,8}
{
\draw [->, shorten >=0.125cm] (2+\x/2,4-\x/2)--(2+\x/2,4);
}
\draw[gray, very thin] (7,0)--(11,4)--(11,0)--cycle;
\draw[very thick] (7,0)--(11,4);
\draw[very thick] (9,2)--(11,0);
\foreach \x in {0,1,2,3,4,5,6,7}
{
\draw [->, shorten >=0.125cm] (7+\x/4,\x/4)--(11-\x/4,\x/4);
\draw [->, shorten >=0.125cm] (11-\x/4,\x/4)--(11-\x/4,4-\x/4);
}
\end{tikzpicture}
\caption{Geometric symmetric chain decompositions for a $4 \times 6$ rectangle and $4 \times 4$ right triangle.}
\end{figure}

It appears that in the ``limit'', we have created some sort of analogue of a symmetric chain decomposition for the polytope $P$. Whatever this symmetric chain decomposition of $P$ is, if it correctly induces symmetric chain decompositions on the various $P(n)$, then compatibility should be a natural consequence.

We shall now be more precise, and define in full generality a symmetric chain decomposition of a (not necessarily full-dimensional) non self-intersecting polytope $P$ in $\mathbb{R}^m$. To do this, we have to define what a chain is, and what it means for a chain to be symmetric. Recall the points in $P$ are partially ordered by setting $(x_i) \le (y_i)$ if $x_i \le y_i$ for all $1 \leq i \leq m$. Say that the \textit{rank} of a point is the sum of its coordinates, and as a matter of convenience, say the \textit{rank} of a polytope $P$ is the sum of the maximum and minimum ranks of points in $P$. Hence we regard a chain that skips no ranks as a curve inside $P$ on which the rank function is increasing. For our purposes we demand that a chain be made up of broken line segments with sides parallel to the coordinate axes (as in Figure $3$), and that it contains either both or neither of its endpoints (a natural condition in the context of symmetric chains).

To that end, we define a \textit{closed chain} inside a polytope $P$ to be given by the union of closed line segments between $v_0$ and $v_1$, $v_1$ and $v_2$, $\ldots$, $v_{k-1}$ and $v_k$, for $v_i$ some points inside $P$, with $v_i-v_{i-1}$ a positive multiple of some standard basis vector. A single point of $P$ is considered to be a closed chain by taking $k=0$. We define an \textit{open chain} similarly, but it omits the two endpoints $v_0$ and $v_k$. A chain (open or closed) is called \textit{symmetric} if the sum of the ranks of $v_0$ and $v_k$ is equal to the rank of $P$. We say a \textit{symmetric chain decomposition} is a partition of $P$ into symmetric chains.

In this paper, we consider the interplay between symmetric chain decompositions of polytopes $P$, and symmetric chain decompositions of the underlying posets $P(n)$.

The diagram in Figure $3$ depicts a much more ``geometric'' decomposition than what one might expect a random symmetric chain decomposition of $P$ to look like. Indeed, the chains appear to move cohesively in groups rather than separately --- this is precisely what we encode in our definition of a ``geometric symmetric chain decomposition'' (presented in Section~2).

Roughly speaking, a ``geometric symmetric chain decomposition'' consists of a collection of simplices called ``starting sets'', which are continuously sheared in a sequence of positive coordinate directions, forming a sequence of ``swipes'', and changing directions at certain intermediate ``turning sets''. This is done in such a way that the resulting chains that are swept out form a symmetric chain decomposition of the polytope $P$. In both of the decompositions from Figure 3, we have three turning sets which are bolded closed line segments, one of which is the starting set, and two swipes.

What will we gain by codifying this notion? Ideally, we will be able to take the geometric picture (produced by observing small examples as above, or purely through geometric insight), and read off instructions on how to decompose $P(n)$ into symmetric chains for any given $n$ by restricting the chains in $P$ to $P(n)$.

We return to the polytope $L(m)$ after a small digression, asking which conditions need to be imposed on the ``geometric symmetric chain decomposition'' of a polytope $P$ to hope for a decomposition of the underlying posets. Surprisingly, the conditions are rather mild, and address three problems which may arise. Firstly, as we shall see below, a certain density condition must be satisfied, so that there are asymptotically the same number of discrete chains coming towards a turning set as there are leaving it. Secondly, we demand that the discrete chains actually land on the turning sets without hopping over, otherwise issues arise for chains near the boundary. Thirdly, it might be that a turning set only contains points of $P(n)$ for $n$ a multiple of a certain number. For the first two of these problems, we call the corresponding conditions on the turning sets the ``weak'' and ``strong'' hyperplane conditions respectively. For the third problem we introduce the notion of ``complexity'' of a turning set. As it turns out, the second problem is contained in the first problem, so the ``strong'' hyperplane condition implies the ``weak'' hyperplane condition.

In Figure~4 on the left, we can see the second problem occuring: in $P(n)$ the points on the bottom line never biject to the points on the slanted line. Hence if such a figure appeared in the decomposition, it would not decompose any discrete poset perfectly. Even if we don't require the chains to exactly land on the slanted line (being cavalier about boundary concerns), we see the first problem occurring:  for $n$ very large, the number of chains leaving the slanted line is approximately $\frac{2}{3}$ times the number of chains entering the slanted line, so there is no way to match them up! In Figure~4 on the right we see an example of the third problem: in $P(n)$ the points on the bottom line biject to the points on the top line only for $n$ a multiple of $2$.

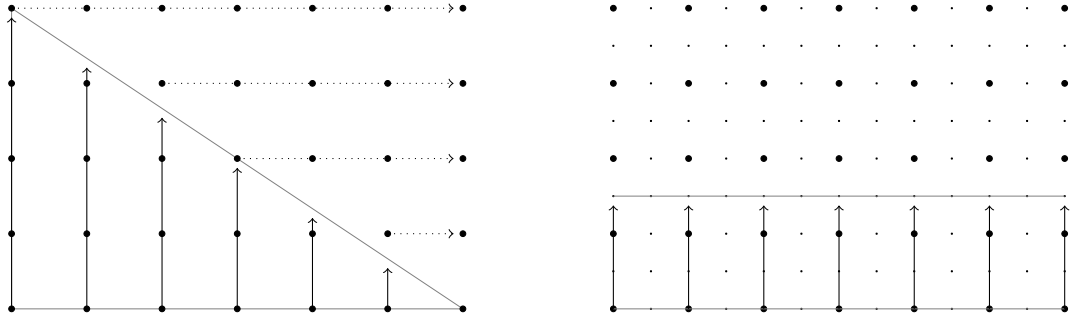
\begin{figure}[H]
\centering
\begin{tikzpicture}
\draw[gray,very thin] (0,0)--(6,0)--(0,4)--cycle;
\foreach \x in {1,2,...,6}
{
\draw [->, shorten >=0.125cm] (6-\x,0)--(6-\x,2*\x/3);
}
\draw[->,shorten >=0.125cm, dotted]  (0,4)--(6,4);
\draw[->,shorten >=0.125cm, dotted]  (2,3)--(6,3);
\draw[->,shorten >=0.125cm, dotted]  (3,2)--(6,2);
\draw[->,shorten >=0.125cm, dotted]  (5,1)--(6,1);
\foreach \x in {0,1,...,6}
	\foreach \y in {0,1,...,4}
    {
		\draw[black,fill=black] (\x,\y) circle (0.25ex);
        \draw[black,fill=black] (\x+8,\y) circle (0.25ex);
    }
\foreach \x in {0,1,...,12}
\foreach \y in {0,1,...,8}
{
\draw[black,fill=gray] (\x/2+8,\y/2) circle (0.05ex);
}
\draw[gray,very thin] (8,0)--(14,0);
\draw[gray,very thin] (8,1.5)--(14,1.5);
\foreach \x in {0,1,2,...,6}
{
\draw [->, shorten >=0.125cm] (\x+8,0)--(\x+8,1.5);
}
\end{tikzpicture}
\caption{Examples of obstructions to discretization.}
\end{figure}

It is very surprising that there are no other problems that can possibly arise. By one of our general main results, Theorem~3.3, given a ``geometric symmetric chain decomposition'' with the ``strong hyperplane condition'', we get induced symmetric chain decompositions of all $P(n)$ for $n$ which are a multiple of the ``complexities'' of the turning sets. In particular, if the ``complexities'' of all turning sets are $1$, then we get induced symmetric chain decompositions of $P(n)$ for all $n$.

Suppose that we had instead only asked for an \textit{asymptotic} decomposition of $P(n)$, i.e. a symmetric chain decomposition of all but $O(\frac{1}{n})$ of the points in $P(n)$. Then intuitively, we would only require a ``geometric symmetric chain decomposition'' in which codimension $1$ concerns can be ignored --- we call this an ``asymptotic geometric symmetric chain decomposition'' and define it rigorously in Section 2. Additionally, concerns near the boundary can be ignored in this situation, so we only need to care about the chain density condition.  By another general main result, Theorem~3.6, provided $P$ is of the same dimension as the ambient space, then given an ``asymptotic geometric symmetric chain decomposition'' of $P$ satisfying the ``weak hyperplane condition'', we have an asymptotic decomposition of $P(n)$.

As it turns out, the ``weak hyperplane condition'' is a purely geometric condition, while the ``strong hyperplane condition'' is a number theoretic one. Consequently, it is most natural to seek decompositions with the ``weak hyperplane condition'' first, before checking if the ``strong hyperplane condition'' is satisfied.

Finally, suppose we have a ``geometric symmetric chain decomposition'' with rational ``turning sets''. Even though the densities between the different ``swipes'' differ, by another general main result, Theorem~3.5, we have that there exists a number $M$ such that for all $n$, there is a disjoint collection of symmetric chains in $P(nM)$ which covers $P(n) \subseteq P(nM)$. Intuitively, this happens because every time a symmetric chain in $P$ which passes through an element of $P(n)$ hits a turning set, the size of the denominator needed to define the point of intersection only increases by a bounded factor.

We now return to the Young posets $L(m,n)$ and the polytopes $L(m)$. In Section~4, we fully apply our framework to study the polytopes $L(m)$ and other related polytopes. A fundamental tool for our investigations is the notion of projection. Given a polytope $P$ and a point $x \in P$ around which $P$ is star-shaped, we define the \textit{projected polytope} $Q$ to be the result of projecting $P$ onto its boundary, or equivalently the union of all faces of $P$ which do not contain $x$. Given a ``geometric symmetric chain decomposition'' of $Q$, we can \textit{cone off at x}, i.e. reverse the procedure to get an induced decomposition of $P$. Of course, as $Q$ only depends on which face $x$ lies in, we can first decompose $Q$, and then solve for the coordinates of $x$ in order to satisfy the various hyperplane conditions.

Combinatorially, projecting from $x$ encodes mathematical induction: we are morally representing $P(n)$ as a union of homothetic shells about $x$, so that decomposing the shell associated to the boundary represents the inductive step. For a concrete example of this, suppose we have the following statement (as appeared for example in a similar form in West's paper \cite{west}): ``By embedding $L(4,n-2)$ inside of $L(4,n)$ as the set of points $1 \le x_1 \le \ldots \le x_4 \le n-1$, it suffices to decompose by induction the remaining set of points where $x_1=0$ or $x_4=n$''. Then geometrically, what we have done is projected from the point $(\frac{1}{2},\frac{1}{2},\frac{1}{2},\frac{1}{2})\in L(4)$, and reduced our considerations to the projected polytope, which is the union of the three-dimensional facets $x_1=0$ and $x_4=1$. More precisely, while projection occasionally encodes a straightforward inductive step as above, it is often the case that it implicitly handles a rather complicated modularity casework, arising exactly when $P(n)$ is not perfectly represented as the union of homothetic shells. This is a difficulty implicitly encountered in previous works, which in our framework is made obsolete.

The importance of projection is twofold. On the one hand, it cleanly unifies in our framework many different types of induction, some of which might even be too complicated to express cleanly from a purely combinatorial point of view. As the point we project from moves between faces of our polytope, we alter the type of mathematical induction (a phenomenon which to our knowledge has no purely combinatorial analogue). On the other hand, projection allows us to decrease dimension and therefore obtain a clear geometric intuition.

In Section 4, we present one ``geometric symmetric chain decomposition'' with the ``strong hyperplane condition'' for $L(3)$, and two for $L(4)$. Inspired by these decompositions, we give a ``geometric symmetric chain decomposition'' for $L(5)$ which happens to not satisfy the hyperplane conditions. From this, we immediately read off symmetric chain decompositions of $L(3,n)$ for all $n$, and two different symmetric chain decompositions of $L(4,n)$. For $L(5,n)$, we can conclude that there exists a collection of disjoint symmetric chains inside $L(5,27n)$ which covers $L(5,n)$.

One can also extract all of the symmetric chain decompositions of $L(3)$ and $L(4)$ that we discuss from the works of West \cite{west} and Wen \cite{xiangdong}. However, the methods used seem to be ad hoc, and computer brute force search in this vein has yielded no results for $m \ge 5$ \cite{xiangdong}. The methods of the present paper give rise to the construction of all of these decompositions in a systematic way by hand, giving a clear geometric insight into the decompositions of the polytopes and their underlying posets.

The similarities of all the decompositions we provide, including the ones which have appeared in the literature in some form, cannot be seen at the algebraic level, but become completely transparent with our method. It will be clear in Section 4.4 how the heuristic similarities of the decompositions enabled us to prove the new result above for $L(5,n)$.

As another application of our method, we further exploit these similarities by considering in Section 4.3 the polytope $L(4)_{a,b,c,d}$, a weighted version of $L(4)$ where the coordinate directions are stretched by factors $a,b,c,d$ respectively. In the discrete poset, rank symmetry fails asymptotically unless $a=d$ and $b=c$. However, if we remove a region from $L(4)_{a,b,c,d}$, we can potentially recover (asymptotic) rank symmetry. Using only basic Euclidean geometry, we are led to discover an ``asymptotic geometric symmetric chain decomposition'' with the ``weak hyperplane condition'' of an entire one-parameter family of polytopes formed by removing a naturally occurring region from $L(4)_{1,\frac{1}{t},t,1}$ to restore rank-symmetry. Even though these polytopes are $4$-dimensional, we are able to produce a clear geometrical intuition, aided largely by the dimension reducing property of projection.

Finally, we harken back to the beginning of the introduction, to discuss how Kleitman's solution to the Littlewood-Offord problem \cite{Kleitman} fits into the general context of products of symmetric chain decompositions, and how ``(asymptotic) geometric symmetric chain decompositions'' interact with products. Kleitman's ``duplication technique'' introduced in \cite{Kleitman} to inductively decompose $\{0,1\}^n=\{0,1\}^{n-1}\times \{0,1\}^1$ into symmetric chains is a special case of the general observation that to decompose the product of two posets with symmetric chain decompositions into symmetric chains, it suffices to decompose the product of each chain in the first decomposition with each chain in the second.

One would expect no difference between this situation and ours --- given symmetric chain decompositions of polytopes $P$ and $Q$, we would naturally expect to be able to take the products of chains in $P$ with chains in $Q$, and decompose the products into chains. However, a serious problem arises with the product of a closed chain and an open chain which prevents us from doing so. More surprisingly, we show in Example 6.10 that if we take $P$ to be the boundary of the unit square and $Q$ to be a unit line segment, then even though both $P$ and $Q$ have ``geometric symmetric chain decompositions'' with the ``strong hyperplane condition'', their product does not even have a symmetric chain decomposition! However, taking the product of an open chain with a closed chain is the only problem that arises. We show in Theorems 6.7 and 6.8 that the product of two ``asymptotic symmetric chain decompositions'' with the ``weak hyperplane condition'' has such a decomposition with the ``weak hyperplane condition'', and the product of two ``geometric symmetric chain decompositions'' with the ``strong hyperplane condition'' made up entirely of either all closed chains or all open chains has such a decomposition with the ``strong hyperplane condition''.

We also provide in Section 6 various easily calculable statistics of polytopes that might aid in producing other geometric decompositions with the ``weak'' and ``strong'' hyperplane conditions.

The structure of this paper is as follows.
\begin{itemize}
\item In Section 2, we introduce our definitions.
\item In Section 3, we state our main results.
\item In Section 4, we restrict ourselves to $L(m)$ and related polytopes, and detail the finding and verification of the decompositions up to $m=5$ (proving Theorems 3.1 and 3.7).
\item In Section 5, we prove the rest of our main results.
\item In Section 6, we fully investigate products of polytopes with geometric decompositions and develop some useful tools and techniques for finding various types of decompositions.
\item In Section 7, we make concluding remarks and give directions for further research.
\end{itemize}

\section{Definitions}

Fix a (not necessarily full-dimensional) polytope $P$. Given a simplex, a sub-simplex is the convex hull of some subset of vertices, and an open sub-simplex is the relative interior of a sub-simplex. We define a \textit{partial simplex} to be a simplex with some open sub-simplices removed that still contains the relative interior of the original simplex.

\begin{defn}
We define a \textit{swipe} to be a (convex) set which is formed by first specifying:
\begin{enumerate}
\item a coordinate direction $e_i$, and
\item two partial simplices $S^s$ and $S^e$ called the starting and ending \textit{turning sets} respectively
\end{enumerate}
such that the points of $S^s$ biject via translation by non-negative multiples of $e_i$ to the points of $S^e$.

We then take the swipe to be the union of all line segments in the $e_i$ direction connecting $S^s$ and $S^e$. The closure of a swipe is naturally a convex polytope.
\end{defn}

\begin{defn}
We say a sequence of swipes $S_0,\ldots, S_k$ is \textit{compatible} if for all $0\le i<k$, $S_i^e=S_{i+1}^s$.
\end{defn}

\begin{defn}
We define a \textit{real snake} $R$ to be a polytope formed by the union of a sequence of compatible swipes $S_0,\ldots, S_k$, which satisfies the non self-intersecting condition that if $i<j$ and $x \in S_i \cap S_j$, then $x$ lies in $S_i^e$, $S_j^s$, and $S_k^s,S_k^e$ for all $i<k<j$.

A point $x$ in $S_i$ is said to be \textit{i-halted} if either:
\begin{enumerate}
\item it lies in $S_k^s,S_k^e$ for all $k<i$ and also in $S_i^s$ or
\item it lies in $S_k^s,S_k^e$ for all $k>i$ and also in $S_i^e$.
\end{enumerate}

We define a \textit{fake snake} $F$ to be similarly formed by starting with a sequence of compatible swipes $S_0,\ldots, S_k$, and taking the union of $T_i=S_i\setminus\{\text{all }i\text{-halted points}\}$, provided the $T_i$ satisfy the non self-intersecting condition that if $i<j$ and $x \in T_i \cap T_j$, then $x$ lies in $T_i\cap S_i^e$, $T_j \cap S_j^s$, and $T_k\cap S_k^s, T_k \cap S_k^e$ for all $i<k<j$.
\end{defn}

\begin{rmk}
In the context of symmetric chain decompositions, real snakes will correspond to groups of closed chains, and fake snakes will correspond to groups of open chains.
\end{rmk}

\begin{defn}
In a compatible sequence $S_0,\ldots, S_k$, the points of $S_0^s$ canonically biject with the points of $S_k^e$. We say a snake contained in $P$ is $\textit{symmetric}$ if for every corresponding pair of vertices in $\bar{S_0^s}$ and $\bar{S_k^e}$, the sum of the ranks is equal to the rank of $P$.
\end{defn}

\begin{defn}
We define a $\textit{geometric symmetric chain decomposition}$ of $P$ to be a partition of $P$ into symmetric snakes.
\end{defn}

\begin{defn}
A geometric symmetric chain decomposition of the (not necessarily full-dimensional) polytope $P$ is said to satisfy the \textit{strong hyperplane condition} if given any turning set of a swipe of direction $e_i$, then some hyperplane containing this turning set can be written in the form $\sum a_jx_j = b$ with all $a_j$ integers, $b$ rational, and $a_i=\pm 1$. Call the denominator of $b$ the \textit{complexity} of the hyperplane. We define the \textit{complexity} of a turning set to be the minimum complexity over all such hyperplanes.
\end{defn}

\begin{defn}
We define an \textit{asymptotic geometric symmetric chain decomposition} of a polytope $P\subset \mathbb{R}^m$ of dimension $m$ to be a disjoint collection of snakes inside $P$ of dimension $m$ such that the set of points in $P$ which do not lie in any snake has codimension $\ge 1$.
\end{defn}

\begin{defn}
An asymptotic geometric symmetric chain decomposition of a polytope $P \subset \mathbb{R}^m$ of dimension $m$ is said to satisfy the \textit{weak hyperplane condition} if for any turning set inside a snake associated to two swipes of directions $e_i$ and $e_j$, the hyperplane that contains it can be written in the form $\sum a_kx_k = b$, with $a_i=a_j$.
\end{defn}

\begin{rmk}
The weak hyperplane condition precisely says that the density of discrete chains coming into a given turning set is the same as the density of discrete chains coming out of the turning set. The strong hyperplane condition additionally ensures that the discrete chains actually land on the turning set before turning.
\end{rmk}

Paramount to the decompositions we shall produce is the notion of projection.
\begin{defn}
Given a point $v$ of middle rank in $P$, such that $P$ is star-shaped around $v$, we define the \textit{projected polytope} $Q$ to be the union of all faces of $P$ in which $v$ doesn't lie. We say that $Q$ is obtained by \textit{projecting} $P$ from $v$, or that $P$ is \textit{coned off} from $Q$ by $v$.
\end{defn}

\begin{obs}
Any geometric symmetric chain decomposition, or symmetric chain decomposition of the projected polytope $Q$, naturally translates to such a decomposition of $P$.
\end{obs}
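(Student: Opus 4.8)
The plan is to exhibit $P$ as a nested family of homothetic copies of $Q$ and to transport the decomposition shell by shell, coning the shells together into the new snakes. Write $v$ for the chosen point of middle rank. Since $P$ is convex and star-shaped about $v$, every ray out of $v$ meets $P$ in a segment whose far endpoint $q$ lies in the relative interior of a unique face $F$; as $F$ cannot contain an interior point of that segment, $v\notin F$, so $q\in Q$. Hence $\Phi\colon Q\times(0,1]\to P\setminus\{v\}$ given by $\Phi(q,t)=v+t(q-v)$ is a bijection, and for fixed $t$ it is the homothety $h_t$ of ratio $t$ centred at $v$, carrying $Q$ onto the shell $Q_t:=v+t(Q-v)$. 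First I would record two elementary facts: (i) $Q$ has the same rank as $P$ and $v$ is also of middle rank for $Q$ --- indeed a vertex of $P$ of minimal (resp.\ maximal) rank is $\ne v$ and so lies in a face not containing $v$, hence in $Q$; and (ii) each $h_t$ carries coordinate-parallel segments to coordinate-parallel segments and satisfies $\operatorname{rank}(h_t(q))=\operatorname{rank}(v)+t(\operatorname{rank}(q)-\operatorname{rank}(v))$, so $h_t$ sends (open or closed) chains to chains, symmetric chains to symmetric chains by (i), swipes to swipes with the same coordinate direction, and turning sets to turning sets.

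For the plain statement: given a symmetric chain decomposition $\{C_\alpha\}$ of $Q$, I would take the family $\{h_t(C_\alpha):\alpha,\ t\in(0,1]\}\cup\{\{v\}\}$. By (ii) each $h_t(C_\alpha)$ is a symmetric chain of the same type in $P$, $\{v\}$ is a symmetric chain because $v$ has middle rank, and since $\Phi$ is a bijection and the $C_\alpha$ partition $Q$ these chains partition $P$.

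For the geometric statement: given a partition of $Q$ into symmetric snakes $\{R_\beta\}$, for a snake $R_\beta$ built from compatible swipes $S_0,\dots,S_k$ with directions $e_{j_i}$ I would set $\widehat R_\beta:=\bigcup_{t\in(0,1]}h_t(R_\beta)$, built from $\widehat S_i:=\bigcup_t h_t(S_i)$ with turning sets $\widehat S_i^{\,s}:=\bigcup_t h_t(S_i^{\,s})$ and $\widehat S_i^{\,e}:=\bigcup_t h_t(S_i^{\,e})$; concretely $\widehat S_i$ is the cone over $S_i$ from $v$ with the apex deleted. The bijection $S_i^{\,s}\to S_i^{\,e}$, $y\mapsto y+c_i(y)e_{j_i}$, cones to $v+t(y-v)\mapsto v+t(y-v)+t\,c_i(y)\,e_{j_i}$, a translation by a non-negative multiple of $e_{j_i}$, so each $\widehat S_i$ is again a swipe; $\widehat S_i^{\,e}=\widehat S_{i+1}^{\,s}$, so compatibility persists; and injectivity of $\Phi$ gives $\widehat S_i\cap\widehat S_j=\bigcup_t h_t(S_i\cap S_j)$ and, more generally, that whether a point is $i$-halted depends only on its image under $\Phi^{-1}$ in $Q$, whence the non--self-intersection conditions (for real and for fake snakes) are inherited from $R_\beta$. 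Extending the swipe bijections to closures, the canonical pairing between $\overline{\widehat S_0^{\,s}}$ and $\overline{\widehat S_k^{\,e}}$ fixes $v$ (the $t\to0$ limit of corresponding points on both sides) and agrees on the $t=1$ shell with the canonical pairing of $R_\beta$; hence corresponding vertices are either $(v,v)$, with rank sum $2\operatorname{rank}(v)=\operatorname{rank}(P)$, or a corresponding pair of $R_\beta$, with rank sum $\operatorname{rank}(Q)=\operatorname{rank}(P)$, so $\widehat R_\beta$ is symmetric. Finally $\bigsqcup_\beta\widehat R_\beta=\bigcup_t\bigsqcup_\beta h_t(R_\beta)=\bigcup_t Q_t=P\setminus\{v\}$, so $\{\widehat R_\beta\}\cup\{\{v\}\}$ is a geometric symmetric chain decomposition of $P$.

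The routine part is fact (ii) together with the bookkeeping transferring the snake axioms. The one genuine subtlety --- the main obstacle --- is verifying that each coned turning set $\widehat S_i^{\,s}$, $\widehat S_i^{\,e}$ is again a partial simplex and that the coned swipe bijection is well defined (no two points of a coned turning set differing by a coordinate vector in a way that conflicts with the intended map); this comes down to showing $v$ does not lie in the affine hull of a turning set of $Q$ and that a ray from $v$ meets such a turning set at most once. Both are automatic when $v$ is interior to $P$ (then $v$ is off the affine hull of every proper face, and rays from $v$ meet $\partial P$ transversally); when $v$ lies on $\partial P$ they require a short separate argument ruling out rays from $v$ running along a face of $Q$, which is straightforward to check for the specific polytopes and points used in Section~4.
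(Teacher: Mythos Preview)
The paper states this as an observation and gives no proof whatsoever; the coning construction you carry out --- foliating $P\setminus\{v\}$ by the homothetic shells $Q_t=h_t(Q)$ and transporting each snake of $Q$ to the cone $\bigcup_{t\in(0,1]}h_t(R_\beta)$, then adjoining $\{v\}$ as a trivial real snake --- is precisely what the authors intend by the phrase ``cone off at $v$'' used throughout Section~4, and your argument fills in the details they leave implicit. Your verification that homotheties preserve swipe directions, compatibility, symmetry (via the rank identity $\operatorname{rank}(h_t(q))=\operatorname{rank}(v)+t(\operatorname{rank}(q)-\operatorname{rank}(v))$), and the non--self-intersection conditions is correct.

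Two small refinements to your closing paragraph. First, you assume $P$ convex; only star-shapedness about $v$ is hypothesised or needed, and your argument uses nothing more. Second, the subtlety you flag is real but slightly mis-stated: for $\widehat S_i$ to be a swipe in direction $e_{j_i}$ one needs that no two distinct points of the coned turning set $\widehat S_i^{\,s}$ differ by a multiple of $e_{j_i}$. Working this out, the condition is that $v$ not lie in the affine span of $S_i^{\,s}\cup(S_i^{\,s}+e_{j_i})$, i.e.\ in the affine hull of the swipe $S_i$ itself, which is a bit stronger than ``$v$ off the affine hull of the turning set'' and not literally the same as ``a ray from $v$ meets the turning set once.'' When $v$ is interior to $P$ this does follow, since each swipe of $Q$ (being convex) lies in some proper face of $P$, whose affine hull misses $v$; for $v$ on the boundary it is, as you say, a short case check in the specific decompositions of Section~4.
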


If the strong hyperplane condition is satisfied in the projected polytope $Q$, then it is not necessarily satisfied in $P$, which most notably happens for our decomposition of $L(5)$. However, it is necessary for the strong hyperplane condition to be satisfied in $Q$ for it to be satisfied in $P$.

\begin{rmk}If we sought to decompose $L(m,n)$ by induction, then we might consider the natural inclusion of $L(m,n-2)$ as $\{\frac{1}{n} \le x_1 \le x_2 \le \ldots \le x_m \le 1-\frac{1}{n}\}$ inside $L(m,n)$, decompose this set by induction, and then proceed to decompose the remainder (as was done be West for $m=4$ in \cite{west}). But this remainder simply corresponds to the poset associated to the polytope which we get by projecting $L(m)$ from $(\frac{1}{2},\frac{1}{2},\ldots,\frac{1}{2})$.
\end{rmk}

Projection will be used for all polytopes we decompose.

\begin{exmp}

We illustrate our methods for the polytope $L(2)$. First, we create a geometric symmetric chain decomposition, and then verify that the strong hyperplane condition is satisfied. For this example, set $\textbf{0}=(0,0)$, $\textbf{1}=(0,1)$, $\textbf{2}=(1,1)$, and write $\overline{\textbf{ij}}$ for the midpoint of $\textbf{i}\textbf{j}$.

We start by projecting from $\overline{\textbf{02}}$. The projected polytope of $L(2)$ is the union of the facets \textbf{01} and \textbf{12}. This has a geometric symmetric chain decomposition with turning sets $\textbf{0},\textbf{1},\textbf{2}$. The swipes \textbf{01} and \textbf{12} form a single real snake, as \textbf{0} and \textbf{2} have complementary ranks.

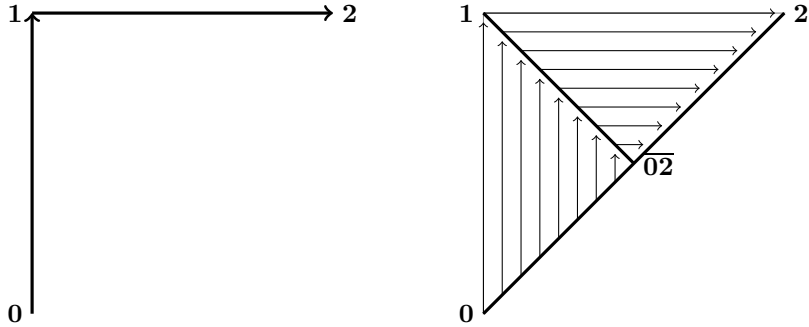
\begin{figure}[H]
\centering
\begin{tikzpicture}
\draw[gray, very thin] (0,0)--(4,4)--(0,4)--cycle;
\draw (0,0) node[left] {\textbf{0}};
\draw (0,4) node[left] {\textbf{1}};
\draw (4,4) node[right] {\textbf{2}};
\draw (2,2) node[right] {$\overline{\textbf{02}}$};
\draw[very thick] (0,0)--(4,4);
\draw[very thick] (2,2)--(0,4);
\foreach \x in {0,1,2,3,4,5,6,7}
{
\draw [->, shorten >=0.125cm] (\x/4,\x/4)--(\x/4,4-\x/4);
\draw [->, shorten >=0.125cm] (\x/4,4-\x/4)--(4-\x/4,4-\x/4);
}
\draw[very thick, ->] (-6,0)--(-6,4);
\draw[very thick, ->] (-6,4)--(-2,4);
\draw (-6,0) node[left] {\textbf{0}};
\draw (-6,4) node[left] {\textbf{1}};
\draw (-2,4) node[right] {\textbf{2}};
\end{tikzpicture}
\caption{Symmetric chain decomposition for the polytope $L(2)$.}
\end{figure}

Coning off at $\overline{\textbf{02}}$, we get an induced geometric symmetric chain decomposition of $L(2)$ with turning sets the three line segments $\textbf{0}\text{ } \overline{\textbf{02}},\textbf{1}\text{ } \overline{\textbf{02}},\textbf{2}\text{ } \overline{\textbf{02}}$. As illustrated in Figure 5, we have the swipes $\textbf{0}\text{ }\textbf{1}\text{ }\overline{\textbf{02}}$ and $\textbf{1}\text{ }\textbf{2}\text{ }\overline{\textbf{02}}$, with directions $e_2$ and $e_1$ respectively. These swipes form a real snake, as each of the pairs $\textbf{0}$ and $\textbf{2}$, and $\overline{\textbf{02}}$ and $\overline{\textbf{02}}$ has complementary ranks.

The table below shows that the strong hyperplane condition is satisfied. For example, we see in the second line of the table that the two directions of swipes adjacent to the second turning set are $x$ and $y$, and we have the hyperplane equation $x+y=1$ with the coefficients of $x$ and $y$ both $1$. As the complexities of all turning sets are $1$, by Theorem 3.3 we get induced symmetric chain decomposition of $L(2,n)$ for all $n$.

\begin{center}
\begin{tabular}{| c | c | c | c|}
\hline
Turning Set & Complexity & Directions In & Directions Out\\
\hline
$x-y=0$ & 1 &  & y\\
$x+y=1$ & 1 & y & x\\
$x-y=0$ & 1 & x & \\
\hline
\end{tabular}
\end{center}

\end{exmp}

In the Section 4 we proceed similarly, decomposing $L(m)$ for $m=3,4,5$, and a family of polytopes derived from $L(4)$.

\section{Main Results}
In this section, we state our main results. We postpone the proofs of the general theorems to Section 5. Theorems 3.1 and 3.7 are proved in Section 4 using the general main results. Further results appear in Section 6. Recall $L(4)_{a,b,c,d}$ was defined to be $L(4)$ with the coordinate directions stretched by factors $a,b,c,d$ respectively.

\begin{thm}
The polytopes $L(m)$ have geometric symmetric chain decompositions for $m \le 5$, and they satisfy the strong hyperplane condition for $m \le 4$. If we restrict our chains to points of denominator $n$, then we obtain a symmetric chain decomposition of $L(m,n)$ for all $n$. For $m=5$, there exists a disjoint collection of symmetric chains in $L(5,27n)$ which cover $L(5,n)$.
\end{thm}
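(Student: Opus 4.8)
The plan is to prove Theorem~3.1 constructively: for each $m\le 5$ we exhibit an explicit geometric symmetric chain decomposition of $L(m)$, and then feed it into the general machinery of Section~3 (Theorems~3.3 and~3.5) to deduce the statements about the discrete posets $L(m,n)$. The organizing principle is projection. Recall that $L(m)=\{0\le x_1\le\cdots\le x_m\le 1\}$ is the $m$-dimensional order simplex, so it is star-shaped around each of its points, in particular around the middle-rank point $c=(\frac{1}{2},\ldots,\frac{1}{2})$ and around suitable points on its faces. By the Observation it therefore suffices to build a geometric symmetric chain decomposition of the projected polytope $Q$ --- the union of the facets of $L(m)$ not containing the chosen projection point $v$ --- and then cone off at $v$; this lowers the dimension by one and, by the Remark following the Observation, is exactly the geometric incarnation of the inductive embedding $L(m,n-2)\hookrightarrow L(m,n)$. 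The recursion is controlled by the boundary structure of $L(m)$: each facet --- one of $\{x_1=0\}$, $\{x_m=1\}$, or $\{x_i=x_{i+1}\}$ --- is isomorphic as a ranked polytope to $L(m-1)$, and $\{x_1=0\}\cap\{x_m=1\}$ is a copy of $L(m-2)$ along which $\{x_1=0\}$ and $\{x_m=1\}$ meet. The base cases are immediate: $L(1)=[0,1]$ is a single swipe, and $L(2)$ is the worked Example above, all of whose turning sets have complexity $1$.

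For $m=3$ and $m=4$ I would project from $c$, so that $Q=\{x_1=0\}\cup\{x_m=1\}$ is a union of two copies of $L(m-1)$ glued along a copy of $L(m-2)$; decompose each copy of $L(m-1)$ using the lower-dimensional decompositions already in hand, arranging the turning sets to agree along the shared $L(m-2)$ so that compatible swipes across the two facets fuse into genuine symmetric, non-self-intersecting snakes of $Q$; and then cone off at $c$ --- re-solving for the coordinates of the projection point if necessary to repair complexities --- to obtain a decomposition of $L(m)$. It then remains to verify the strong hyperplane condition: for each turning set one writes down a defining hyperplane $\sum a_jx_j=b$ and checks that the $a_j$ are integers, that the coefficient of each adjacent swipe direction is $\pm1$, and that $b$ has denominator $1$. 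This is a finite, essentially mechanical check of the same shape as the table in the $L(2)$ Example, and once it is done Theorem~3.3 yields the symmetric chain decomposition of $L(m,n)$ for every $n$; the two decompositions of $L(4)$ correspond to two natural ways of organizing this construction.

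For $m=5$ the plan is to imitate the $m=4$ construction one dimension higher --- project, decompose the facets (now copies of $L(4)$, for which we have just produced decompositions), glue along shared faces, and cone off --- obtaining a geometric symmetric chain decomposition of $L(5)$ with rational turning sets which, however, fails the hyperplane conditions, since at some turning set the swipe-direction coefficient is not $\pm1$, or the constant $b$ does not have denominator $1$. Theorem~3.5 then applies and produces an $M$ with the stated covering property, and the remaining content is that $M=27$ works. For this I would follow a symmetric chain through a point of $L(5,n)$ and, at each of the finitely many turning sets it meets, bound the factor by which the denominator of the intersection point can grow --- each such factor divides a small integer read off the defining hyperplane of that turning set --- and check that the product of these factors over the worst-case chain divides $27=3^3$. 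The main obstacle is precisely this $m=5$ step: producing an explicit decomposition whose snakes are simultaneously symmetric and non-self-intersecting (the gluing hypotheses in the definitions of real and fake snakes become delicate when several facets meet along the turning sets), and then carrying out the denominator bookkeeping tightly enough to land on $27$ rather than a larger multiple. The cases $m\le 4$, by contrast, reduce to a finite verification once the right projection points and facet decompositions have been chosen.
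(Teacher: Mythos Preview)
Your plan diverges from the paper's in one essential way, and that divergence is where the gap lies. The paper does not project once from $c$ and then try to recursively decompose an $(m-1)$-dimensional shell. Instead it projects from \emph{several} middle-rank points simultaneously---one for $L(3)$ (the interior barycenter $\overline{\textbf{0123}}$, not $c$), two for $L(4)$ (namely $\overline{\textbf{04}}$ and $\overline{\textbf{123}}$), and three for $L(5)$ (the middle-rank points on the edges $\textbf{05}$, $\textbf{13}$, $\textbf{24}$)---so that the projected polytope is always a \emph{two}-dimensional simplicial complex. On a $2$-complex one can literally draw the snakes (Figures~6, 7, 8, 10), read off the turning sets, and check the hyperplane condition by writing down five or six linear equations in a table. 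The recursive flavour you describe is visible, but only at the level of these $2$-complexes: the paper remarks that each $2$-dimensional snake for $L(4)$ is obtained by gluing two truncated $2$-dimensional snakes from $L(3)$.

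Your alternative---project once from $c$, obtain $Q=\{x_1=0\}\cup\{x_m=1\}$, and decompose each facet by the already-built $L(m-1)$ decomposition---does not work as stated. The snakes of the $L(m-1)$ decomposition placed on $\{x_1=0\}$ are symmetric about rank $(m-1)/2$, not $m/2$, and their starting and ending turning sets lie on the diagonal faces $x_i=x_{i+1}$, not on the shared ridge $\{x_1=0,\,x_m=1\}$. So there is nothing to ``fuse'': the chains from the two facets neither begin nor end on the common $L(m-2)$, and no rearrangement of turning sets is offered. This is precisely the step the paper circumvents by the second (and third) projection, which collapses the $(m-1)$-dimensional shell to a planar picture in which the fusing is done by hand. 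For $L(5)$ in particular, your plan would leave you with a $4$-dimensional shell built from two copies of $L(4)$---exactly the object the paper refuses to touch directly---and the value $27$ is specific to the paper's decomposition: it comes from the coefficients $3$ in the hyperplane equations $b-c-3d+3e=0$ and $3a-3b-c+d=0$ of that particular $2$-dimensional construction, so there is no reason your (unconstructed) decomposition would reproduce it.
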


\begin{prop}
A geometric symmetric chain decomposition of $P$ induces a symmetric chain decomposition of $P$.
\end{prop}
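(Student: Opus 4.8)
The plan is to reduce the statement to a single snake. By definition a geometric symmetric chain decomposition of $P$ is a partition of $P$ into symmetric snakes, and a symmetric chain decomposition of $P$ is a partition of $P$ into symmetric chains; so it suffices to show that each symmetric snake $N$ occurring in the decomposition admits a canonical partition into symmetric chains --- closed chains when $N$ is a real snake, open chains when $N$ is a fake snake --- and then to take the union of these partitions over all the snakes.

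So fix a symmetric real snake $N$ built from a compatible sequence of swipes $S_0,\dots,S_k$ with directions $e_{j_0},\dots,e_{j_k}$. For each point $p\in S_0^s$ I would form a broken line by setting $v_0=p$, following inside $S_0$ the connecting segment in direction $e_{j_0}$ from $v_0$ to its other endpoint $v_1\in S_0^e=S_1^s$, then following inside $S_1$ the connecting segment from $v_1$ to $v_2\in S_1^e=S_2^s$, and so on until reaching $v_{k+1}\in S_k^e$. After discarding repeated consecutive vertices arising from length-zero segments, this is a legitimate closed chain: consecutive differences are positive multiples of standard basis vectors, so the rank strictly increases along it. To see that it is symmetric, note that the map $S_0^s\to S_k^e$ carrying $v_0$ to $v_{k+1}$ is the composite of the connecting-segment maps of the individual swipes, each of which is affine (this is exactly the condition that makes a swipe a convex set fibered by its $e_{j_i}$-segments); hence the composite is affine, so $\operatorname{rank}(v_0)+\operatorname{rank}(v_{k+1})$ is an affine function of $v_0$ on the simplex $\overline{S_0^s}$, and since the definition of a symmetric snake makes it equal to $\operatorname{rank}(P)$ at every vertex of $\overline{S_0^s}$, it is identically $\operatorname{rank}(P)$ there.

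Next I would check that these chains partition $N$. Every $x\in N$ lies in some swipe, and by the non self-intersecting condition the set of indices $i$ with $x\in S_i$ is an interval $[i_0,i_1]$ along which $x$ lies in $S_{i_0}^e$ (and in the relevant intermediate turning sets and in $S_{i_1}^s$); so $x$ lies on the connecting segment of $S_{i_0}$ whose initial endpoint is the unique preimage of $x$ in $S_{i_0}^s$, and pulling this preimage back through $S_{i_0-1},\dots,S_0$ produces a unique $p\in S_0^s$ whose chain passes through $x$. Coverage follows, and disjointness reduces to two cases for chains from $p\ne p'$ meeting at $x$: if they meet inside a single swipe, then because its connecting segments are pairwise disjoint we get $p=p'$ after pulling back; if they meet at a point lying in two swipes, that point lies in turning sets and the previous paragraph's uniqueness forces $p=p'$. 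For a fake snake the only modification is that one deletes the $i$-halted points; I would check that for each chain this removes exactly the two endpoints $v_0$ and $v_{k+1}$ and nothing else, turning each closed chain into precisely its open counterpart, and that the non self-intersecting condition for fake snakes preserves the partition property.

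The main obstacle I expect is the bookkeeping at the turning sets rather than any single deep point: one must match the combinatorial non self-intersecting conditions --- and, for fake snakes, the precise definition of $i$-halted --- against the geometry, handle degenerate length-zero connecting segments, account for turning sets meeting the boundary of $P$, and note that turning sets are transverse to their swipe directions so that the connecting segments genuinely fiber each swipe. The one genuinely geometric ingredient, namely that each swipe's connecting-segment map is affine so that ``symmetric at the vertices'' upgrades to ``symmetric on all of $S_0^s$'', should be routine; the rest is unwinding the definitions in the right order.
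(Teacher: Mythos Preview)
Your proposal is correct and follows essentially the same approach as the paper's proof: construct chains by following the swipes from each point of $S_0^s$, use the non self-intersecting condition to obtain disjointness, and use affinity of the map $S_0^s\to S_k^e$ (the paper phrases this as ``linear interpolation of the ranks of the vertices'') to deduce symmetry from the vertex condition. Your write-up is simply more explicit about the bookkeeping at turning sets and the fake-snake endpoint removal, which the paper leaves terse.
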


\begin{thm}
Suppose there is a geometric symmetric chain decomposition of the polytope $P$ satisfying the strong hyperplane condition. Then there is an induced symmetric chain decomposition of the poset $P(kN)$ for all $k$, where $N$ is the least common multiple of the complexities of the turning sets.
\end{thm}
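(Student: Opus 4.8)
The plan is to show that when we restrict a geometric symmetric chain decomposition of $P$ to the lattice $\frac{1}{kN}\mathbb{Z}^m$, each symmetric snake breaks up into a disjoint union of honest symmetric chains in the poset $P(kN)$. Since the snakes partition $P$, their restrictions partition $P(kN)$, and it will remain only to check that each piece is a symmetric chain. So the whole argument is local: fix one symmetric snake built from a compatible sequence of swipes $S_0,\ldots,S_\ell$ and analyze $P(kN)$ inside it.

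First I would set up the combinatorics of a single swipe $S_i$ in direction $e_{j(i)}$ with turning sets $S_i^s$ and $S_i^e$. The key point is the strong hyperplane condition: the hyperplane through $S_i^s$ can be written $\sum a_k x_k = b$ with integer $a_k$, $b$ rational with denominator dividing $N$ (hence dividing $kN$), and the coefficient in the swipe direction equal to $\pm 1$. I would observe two consequences. (a) Because $a_{j(i)}=\pm 1$ and the remaining $a_k$ are integers, a point of $\frac{1}{kN}\mathbb{Z}^m$ whose non-$j(i)$ coordinates are fixed and which lies on this hyperplane automatically has its $j(i)$-coordinate in $\frac{1}{kN}\mathbb{Z}$ as well, and moreover the hyperplane actually contains lattice points of $P(kN)$ in the relative interior of the turning set — this is exactly the "chains land on the turning set without hopping over" phenomenon, and it is what makes the denominator-$kN$ points on $S_i^s$ biject, via translation by nonnegative multiples of $e_{j(i)}$, with those on $S_i^e$. (b) Within the swipe, a maximal segment of $P(kN)$ in the $e_{j(i)}$ direction is a lattice chain whose two endpoints lie one in $S_i^s$ and one in $S_i^e$.

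Next I would splice these per-swipe pieces together across the whole snake. Using compatibility $S_i^e = S_{i+1}^s$, the denominator-$kN$ points of the common turning set match up, so the lattice segment of $P(kN)$ exiting $S_i$ at a point of $S_i^e$ is continued by the lattice segment of $P(kN)$ entering $S_{i+1}$ at the same point; iterating, each denominator-$kN$ point of $S_0^s$ generates one lattice chain threading the whole snake and terminating at the corresponding point of $S_\ell^e$. The non-self-intersection condition for snakes guarantees these chains are pairwise disjoint and that no point of $P(kN)$ in the snake is missed, while for a fake snake the removal of $i$-halted points is exactly what deletes the two endpoints, producing open lattice chains; I would treat the real and fake cases in parallel here. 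Finally, symmetry of the snake says the ranks of corresponding vertices of $\bar{S_0^s}$ and $\bar{S_\ell^e}$ sum to the rank of $P$; since the lattice endpoints of each chain lie on these turning sets and the rank function is affine, the sum of the ranks of the two endpoints of each lattice chain equals $\mathrm{rank}(P)$ — i.e.\ it is a symmetric chain in $P(kN)$ — and a point-chain ($k=0$ case) sits at exactly middle rank.

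The main obstacle is the boundary bookkeeping in step (a): verifying that the denominator-$kN$ lattice points genuinely land in the (relatively open part of the) partial simplices $S_i^s, S_i^e$ and that the bijection by translation descends to the lattice, rather than some lattice points "falling off" a turning set or landing on a removed open sub-simplex. This is precisely the issue illustrated by Figure~4, and it is where the integrality of the $a_k$ together with $a_{j(i)}=\pm 1$ and the denominator condition on $b$ must be used in full force — a purely generic or dimension-count argument will not suffice. Once that lemma about a single swipe is nailed down, assembling the snake and checking symmetry is routine.
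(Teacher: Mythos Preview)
Your proposal is correct and takes essentially the same approach as the paper: restrict the continuous chains to $P(kN)$ and use the strong hyperplane condition, via exactly the computation you sketch in (a), to show that every turning point (and endpoint) of a continuous chain through a lattice point is itself a lattice point. The boundary bookkeeping you flag as the main obstacle is not actually needed---since the snakes already partition $P$, each lattice point lies on a unique continuous chain whose turning points are, by construction of the swipes, in the (partial-simplex) turning sets, so nothing can ``fall off'' onto a removed face; only the denominator check remains, and that is precisely your (a).
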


\begin{cor}
If all vertices of a polytope $P$ have integral coordinates,  and $P$ has a geometric symmetric chain decomposition satisfying the strong hyperplane conditions with the affine spans of the turning sets all passing through at least one vertex of $P$, then we get a symmetric chain decomposition of $P(k)$ for all $k$.
\end{cor}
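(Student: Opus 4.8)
The plan is to reduce everything to Theorem 3.3. That theorem already produces a symmetric chain decomposition of $P(kN)$ for all $k$, where $N$ is the least common multiple of the complexities of the turning sets; so it suffices to prove that, under the stated hypotheses, every turning set has complexity $1$, whence $N = 1$ and $P(kN) = P(k)$ for all $k$.

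To establish this, fix a turning set $T$ belonging to a swipe of some coordinate direction $e_i$. The strong hyperplane condition furnishes a hyperplane $H$, say $\sum_j a_j x_j = b$, with all $a_j \in \mathbb{Z}$, $b \in \mathbb{Q}$, $a_i = \pm 1$, and $T \subseteq H$. Since $H$ is an affine subspace, the affine span of $T$ is contained in $H$. By hypothesis this affine span passes through some vertex $v$ of $P$, so $v \in H$; that is, $b = \sum_j a_j v_j$. But $v$ has integral coordinates and the $a_j$ are integers, so $b \in \mathbb{Z}$, i.e.\ $H$ has complexity $1$. As the complexity of a turning set is the minimum of the complexities of admissible hyperplanes, and complexities are positive integers, $T$ has complexity exactly $1$.

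Since this argument applies to every turning set, $N = 1$, and Theorem 3.3 delivers a symmetric chain decomposition of $P(k)$ for all $k$. I do not expect a genuine obstacle here: the corollary is essentially an integrality bookkeeping consequence of Theorem 3.3. The only point needing a moment's care is that the hyperplane produced by the strong hyperplane condition automatically contains the distinguished integral vertex of $P$ — which is forced, because that hyperplane contains $T$ and hence its affine span, and the latter contains the vertex by hypothesis.
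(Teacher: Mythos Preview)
Your proposal is correct and matches the paper's intent: Corollary~3.4 is stated without a separate proof precisely because it follows from Theorem~3.3 by the integrality observation you give---namely, that any admissible hyperplane for a turning set must contain the affine span of that set, hence contain the integral vertex it passes through, forcing $b=\sum_j a_j v_j\in\mathbb{Z}$ and complexity~$1$. There is nothing to add.
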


\begin{thm}
Given a geometric symmetric chain decomposition of a polytope $P$ with the affine spans of all turning sets rational, there exists an integer $M$ such that for every $k \ge 1$, we can find a disjoint collection of symmetric chains in $P(kM)$, with each point of $P(k)$ contained in some chain.
\end{thm}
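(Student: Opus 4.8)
The plan is to follow a single symmetric chain in $P$ as it is swept out through the compatible swipes of a given snake, and to track how the denominator of a point of $P(k)$ lying on that chain is multiplied each time the chain crosses a turning set. First I would fix a geometric symmetric chain decomposition of $P$ with all turning-set affine spans rational, and fix a snake built from compatible swipes $S_0,\dots,S_\ell$ with directions $e_{i_0},\dots,e_{i_\ell}$. A point $x\in P(k)$ lies in the relative interior of exactly one $S_t$ (or is an $t$-halted point for some $t$); the chain through $x$ is determined by sliding back along $e_{i_t},e_{i_{t-1}},\dots$ to the starting turning set $\bar{S_0^s}$, and forward along $e_{i_t},e_{i_{t+1}},\dots$ to $\bar{S_\ell^e}$. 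The key observation is that passing from one swipe to the next means intersecting the line $\{x + s\, e_{i_{t+1}} : s\in\mathbb{R}\}$ with the affine hyperplane $H_{t+1}$ containing the turning set $S_{t+1}^s = S_t^e$. Since $H_{t+1}$ is rational, it has an equation $\sum_j a_j y_j = b$ with integer $a_j$ and rational $b$; writing $D$ for a common denominator of all the $b$'s and $A$ for a bound on $|a_j|$ over all (finitely many) turning hyperplanes in the decomposition, the intersection point has each coordinate with denominator dividing $a_{i_{t+1}}\cdot(\text{denominator of current point})\cdot D$, hence multiplied by at most $A\cdot D$.

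Next I would make this quantitative. Let $T$ be the total number of turning sets appearing across all (finitely many) snakes of the decomposition, and set $M = (A D)^{T}$, or more carefully the least common multiple of the products one gets along each individual snake — either works since the decomposition is finite. Starting from a point of denominator dividing $k$, after crossing at most $T$ turning hyperplanes all the vertices $v_0,\dots,v_{k'}$ defining the broken-line chain through $x$ have coordinates of denominator dividing $kM$, so the chain is a genuine chain in $P(kM)$ once we also include all lattice points of $P(kM)$ lying on the line segments between consecutive $v_j$'s. Symmetry of the resulting discrete chain is inherited from the symmetry of the snake: the endpoints lie in $\bar{S_0^s}$ and $\bar{S_\ell^e}$ and correspond under the canonical bijection, so their ranks sum to the rank of $P$. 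Disjointness of the collection of chains we obtain this way follows from the non-self-intersecting condition in the definition of snake together with the fact that distinct snakes partition $P$: two points of $P(k)$ either lie on chains coming from the same snake (and then the snake's internal non-intersection, pushed down to the lattice, keeps the discrete chains disjoint) or from different snakes (and then the chains are contained in disjoint subsets of $P$). Finally, every point of $P(k)$ lies in some snake of the decomposition hence on one of these chains, giving the covering statement.

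The main obstacle I expect is the bookkeeping around halted points and the boundary behaviour: a point of $P(k)$ might be $t$-halted, meaning the chain through it does not actually extend through all swipes in the naive way, and one has to check that the ``stub'' chain one gets is still symmetric and still fits in $P(kM)$ — this is where the definitions of real versus fake snake and of $i$-halted points have to be used carefully, and it is the place a sloppy argument would go wrong. A secondary subtlety is that we only claimed a \emph{disjoint collection} covering $P(k)$, not a partition of $P(kM)$: we must resist the temptation to prove the stronger statement, since the chain densities between different swipes genuinely differ and the leftover points of $P(kM)$ need not be coverable by the induced chains at all — so the proof should stop once covering and disjointness are established, without attempting completeness in $P(kM)$.
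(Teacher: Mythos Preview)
Your proposal is correct and follows essentially the same route as the paper: restrict the continuous symmetric chain decomposition of $P$ to the chains that meet $P(k)$, and show by tracking denominators across each rational turning hyperplane that every turning point (hence every endpoint) of such a chain lies in $P(kM)$ for a fixed $M$ built as a product of the per-swipe denominator factors. The halted-point bookkeeping you flag as the main obstacle is in fact handled automatically once you phrase the argument as ``restrict the continuous chains to $P(kM)$'' (as the paper does via Proposition~3.2), so it is less delicate than you anticipate; also note a small index slip --- when passing from $S_t$ to $S_{t+1}$ you intersect the line in direction $e_{i_t}$, not $e_{i_{t+1}}$, with the turning hyperplane.
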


\begin{thm}
Let $P$ be a polytope of dimension $m$ in $\mathbb{R}^m$. Then, given an asymptotic geometric symmetric chain decomposition of $P$ with the weak hyperplane condition, we can delete $O(\frac{1}{k})$ of the points of the poset $P(k)$ such that we can decompose the remainder into symmetric chains.
\end{thm}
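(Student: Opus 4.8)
The plan is to work one snake at a time and, inside each snake, to splice together the ``obvious'' straight discrete chains running along its swipes, using the weak hyperplane condition to turn the splicing at each turning set into a bijection.

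First I would throw away the codimension-$1$ debris. There are finitely many snakes, they cover all of $P$ outside a set of codimension $\ge 1$, and each snake has finitely many turning sets, again of codimension $\ge 1$; hence all but $O(k^{m-1}) = O(\tfrac1k)\,|P(k)|$ of the points of $P(k)$ lie in the relative interior of a unique swipe of a unique snake (here $\dim P = m$ is used to see $|P(k)| = \Theta(k^m)$). Discarding the rest, it suffices, for a fixed snake $R = S_0\cup\dots\cup S_\ell$ with swipe directions $\epsilon_0,\dots,\epsilon_\ell$ and turning sets $\Delta_0,\dots,\Delta_{\ell+1}$, to partition all but $O(k^{m-1})$ of the remaining points of $P(k)\cap R$ into symmetric chains. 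Every such point lies on a unique maximal lattice segment in its swipe's direction --- call these \emph{fibres} --- and the fibres of $S_j$ biject with $\tfrac1k\mathbb{Z}^{m-1}\cap\pi_{\epsilon_j}(\Delta_j)$, where $\pi_{\epsilon_j}$ forgets the $\epsilon_j$-coordinate; since the snakes are $m$-dimensional there are $\Theta(k^{m-1})$ of them in each swipe. Each discrete chain I build will consist of one fibre from each of $S_0,\dots,S_\ell$, joined by short \emph{bridges}.

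The crux is the splicing across an internal turning set $\Delta = \Delta_{j+1}$, between the incoming swipe $S_j$ (direction $a$, say) and the outgoing swipe $S_{j+1}$ (direction $b$), lying in a hyperplane $\sum_i c_i x_i = d$; the weak hyperplane condition says precisely $c_a = c_b$. An incoming fibre with $\pi_a$-coordinates $\bar x$ meets $\Delta$ at $x_a = \tau(\bar x) = (d - \sum_{i\ne a}c_i x_i)/c_a$, so its last lattice point sits at $x_a = \lfloor k\tau\rfloor/k$; I send it to the outgoing fibre obtained by freezing $x_a = \lfloor k\tau\rfloor/k$ and keeping the remaining coordinates. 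Since $c_a = c_b$, the quantity $k\tau$ depends on the coordinate $x_b$ only through the integer $-kx_b$, and this is exactly what makes the assignment a bijection from incoming fibres to outgoing fibres --- up to the $O(k^{m-2})$ fibres whose partner leaves the relevant cross-section, which I discard together with a $\tfrac1k$-neighbourhood of $\partial\Delta$ and of any locus where fibres degenerate (still only $O(k^{m-1})$ points). The same computation shows the last lattice point of an incoming fibre and the first lattice point of its assigned outgoing fibre differ by at most $\tfrac1k$ in coordinates $a$ and $b$ and agree in the others, so they can be joined by a bridge of at most one unit step lying inside $\overline{S_j}\cup\overline{S_{j+1}}\subseteq R$. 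Morally, the weak hyperplane condition says the incoming and outgoing discrete-chain densities at $\Delta$ coincide --- both are $\tfrac{|c_a|}{\|c\|}\operatorname{vol}_H(\Delta)$ by a Gram-determinant computation, as already noted after its definition --- and this is what promotes the crude ``round to the nearest fibre'' map to an honest bijection with $O(\tfrac1k)$ displacement.

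Splicing along the whole snake, each fibre of $S_0$ then extends uniquely through one bridge-and-fibre per turning set to a lattice chain running from near $\Delta_0$ to near $\Delta_{\ell+1}$; since $\ell = O(1)$, the total displacement from the corresponding continuous chain stays $O(\tfrac1k)$, and these chains partition $P(k)\cap R$ off the discarded points. To make a chain symmetric I note its endpoints $P_{\mathrm{start}}, P_{\mathrm{end}}$ are $O(\tfrac1k)$-close to a complementary-rank pair on $\Delta_0$ and $\Delta_{\ell+1}$ (the continuous flow along $R$ being affine and pairing ranks at vertices, hence everywhere on the closures of the outermost turning sets), so $\operatorname{rank}(P_{\mathrm{start}}) + \operatorname{rank}(P_{\mathrm{end}})$ differs from $\operatorname{rank}(P)$ by some $\rho\in\tfrac1k\mathbb{Z}$ with $|k\rho|$ bounded uniformly in $k$; trimming $|k\rho|$ lattice points off one end repairs this, the trimmed points (at most $O(1)$ per chain, over $O(k^{m-1})$ chains) being discarded. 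This trimming is possible unless the chain is too short, which happens only for chains passing $O(\tfrac1k)$-close to the middle-rank hyperplane --- a slab containing $O(k^{m-1})$ points, which I also discard. Summing the $O(k^{m-1})$ deletions over the finitely many snakes (fake snakes being handled the same way, their omitted halted points sitting in already-discarded turning sets) completes the argument. I expect the real obstacle to be exactly the turning-set bijection: arranging the floor-rounding and the weak hyperplane condition to interlock so that the fibre-to-fibre map is genuinely a bijection with $O(\tfrac1k)$ displacement, and making this survive composition through all $\ell$ turning sets without the discretization error snowballing; the rest is the (substantial but routine) bookkeeping that each exceptional set really is of size $O(k^{m-1})$.
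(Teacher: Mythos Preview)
Your argument is correct and follows the same global architecture as the paper's proof (work snake by snake, discard codimension-$1$ debris, build discrete chains along swipes, throw away the $O(k^{m-2})$ chains that fall off near facets of turning sets, then trim $O(1)$ points per surviving chain to achieve symmetry). The genuine difference is in how the splicing across an internal turning set is carried out. The paper does not build an explicit fibre-to-fibre bijection at all: instead it lets each discrete chain \emph{overshoot} the hyperplane by one lattice step in the old direction before switching, and then proves directly that any two of the resulting chains $C_x,C_y$ are either disjoint or nested, by examining the first intersection point $z$ and using $c_a=c_b$ to derive a contradiction from $H(x')=H(y')$ for the predecessors $x',y'$ of $z$. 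Taking maximal chains then gives the partition for free. Your approach instead \emph{undershoots} (stopping at the last lattice point before the hyperplane), and uses the identity $\lfloor k\tau(\bar x)\rfloor + kx_b = \lfloor (kd-\sum_{i\ne a,b}c_i kx_i)/c_a\rfloor$ --- valid precisely because $c_a=c_b$ pulls the integer $kx_b$ cleanly through the floor --- to exhibit the fibre map as an honest bijection on $\tfrac1k\mathbb{Z}^{m-1}$. The paper's route is slicker in that it never names the bijection and handles turning-set lattice points and swipe-interior points uniformly; yours is more constructive and makes the arithmetic role of $c_a=c_b$ completely explicit. One small imprecision: your ``bridge of at most one unit step'' is really at most two when $\{k\tau\}=0$ (the intermediate point then lies on $\Delta$ itself), but these cases are already $O(k^{m-2})$ per turning set and are absorbed into your boundary discard, so nothing breaks.
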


\begin{thm}
There exists a natural one-parameter deformation of $L(4)$ formed by removing an (open) simplex from $L(4)_{1,\frac{1}{t},t,1}$ to achieve volume rank symmetry, such that for each $t \ge 1$ there exists an asymptotic geometric symmetric chain decomposition with the weak hyperplane condition.
\end{thm}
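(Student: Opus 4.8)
The plan is to transport one of the two geometric symmetric chain decompositions of $L(4)$ constructed in Section~4 through the diagonal stretch and track exactly where it degenerates. Write $D_t=\mathrm{diag}(1,\tfrac1t,t,1)$, so that $L(4)_{1,\frac1t,t,1}=D_t\cdot L(4)$, and pull the rank functional $y_1+y_2+y_3+y_4$ back to the weighted functional $r_t(x)=x_1+\tfrac1t x_2+t x_3+x_4$ on $L(4)$. The standard volume‑preserving involution $\iota\colon x\mapsto(1-x_4,1-x_3,1-x_2,1-x_1)$ of $L(4)$ satisfies $r_t(\iota x)=(2+t+\tfrac1t)-r_t(x)+(t-\tfrac1t)(x_3-x_2)$, and since $t\ge 1$ and $x_2\le x_3$ on $L(4)$ the ``defect'' term $(t-\tfrac1t)(x_3-x_2)$ is non‑negative — this is precisely the obstruction to volume rank symmetry. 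The first step is therefore to pin down the open simplex $\Delta_t\subset L(4)$, cut out by the natural inequality controlling where this defect is ``too large'' (a region near the face $x_2=x_3$, rescaled by $t$), whose removal makes the rank measure symmetric about $\tfrac12(2+t+\tfrac1t)$; equivalently, so that $\iota$ (or a suitably modified rank‑reversing volume‑preserving map) restricts to a symmetry of $L(4)\setminus\Delta_t$. Verifying that the removal exactly balances the two tails of the pushforward measure is a volume computation in dimension $4$, and it establishes the ``volume rank symmetry'' that is a prerequisite for any decomposition into symmetric snakes to exist.

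Next I would cut the dimension by projection. Coning the whole picture off from the image $D_t(\tfrac12,\tfrac12,\tfrac12,\tfrac12)=(\tfrac12,\tfrac1{2t},\tfrac t2,\tfrac12)$ of the center of $L(4)$, Observation~2.12 reduces the problem to producing an asymptotic geometric symmetric chain decomposition of the $3$‑dimensional projected polytope — the union of the facets of $D_t(L(4)\setminus\Delta_t)$ not containing this center, namely stretched copies of the facets $x_1=0$ and $x_4=1$ of $L(4)$ together with the new facet exposed by removing $\Delta_t$ — and then coning back off. This is where the ``basic Euclidean geometry'' enters: I would write down explicitly, on each of the finitely many $3$‑dimensional pieces, the simplicial starting sets, the ordered sequence of coordinate directions of the swipes, and the intermediate turning sets, choosing them so that each turning set between swipes of directions $e_i$ and $e_j$ lies on a hyperplane $\sum a_k x_k=b$ with $a_i=a_j$. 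Since this is the weak hyperplane condition of Definition~2.9 — a purely geometric condition with no number theory — its verification is linear‑algebra bookkeeping; combinatorially one just has to arrange that, after applying $D_t$, the density of discrete chains entering each turning set matches the density leaving it, for every $t\ge1$ at once. Being cavalier about codimension‑$1$ and boundary phenomena is legitimate here because we are only building an asymptotic decomposition.

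The main obstacle I anticipate is uniformity in the parameter $t$: one must produce a single combinatorial template — the same swipes, the same adjacency pattern of turning sets, and the same family $\Delta_t$ up to the affine rescaling by $t$ — that remains a valid, non‑self‑intersecting arrangement of snakes (in the sense of Definition~2.3) and continues to satisfy the weak hyperplane condition uniformly across the whole range $t\ge1$, including the limits $t=1$ and $t\to\infty$. Concretely, this means checking that $\Delta_t$ never collides with a turning set, that no swipe collapses to lower dimension as $t$ varies, that the snakes still cover all of $D_t(L(4)\setminus\Delta_t)$ outside a codimension‑$\ge1$ set, and that $L(4)\setminus\Delta_t$ remains star‑shaped about the chosen center so that projection is justified. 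Once this template is in hand and these degeneracy checks pass, the desired conclusion is immediate — and, via Theorem~3.6, it further yields for every $k$ a symmetric chain decomposition of all but $O(1/k)$ of the points of the poset of $D_t(L(4)\setminus\Delta_t)$ of denominator $k$.
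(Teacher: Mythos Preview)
Your proposal has the right overall architecture (stretch, remove a simplex, project, decompose), but it misses the key structural idea and gets the order of operations backward. In the paper the removed simplex is not found by a volume computation; it is \emph{forced} by the decomposition. The paper keeps the $L(4)$ decomposition from Figure~7 as a template and projects from \emph{two} points, $\textbf{B}=\overline{\textbf{04}}$ and a second point $\textbf{B'}$ which at $t=1$ is $\overline{\textbf{123}}$. For $t>1$ the weak hyperplane conditions on the coned-off turning sets become linear constraints on $\textbf{B'}$: three hyperplanes $H_1,H_2,H_3$ must pass through it, and a Ceva-type argument in the facet $\textbf{0123}$ shows they meet in a line $\textbf{0r}$, on which the unique middle-rank point is taken as $\textbf{B'}$. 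Since $\textbf{B'}$ is no longer on the $2$-face $\textbf{123}$, coning off at $\textbf{B}$ and $\textbf{B'}$ covers only $P_t=\overline{L(4)_t\setminus\mathrm{conv}(\textbf{B'1234})}$; the removed open simplex is thus $\mathrm{conv}(\textbf{B'1234})$, and volume rank symmetry is a \emph{consequence} (Remark~4.3), not the selection criterion.

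By contrast, your plan first tries to pin down $\Delta_t$ from the involution defect $(t-\tfrac1t)(x_3-x_2)$, but this is underdetermined: many simplices restore the rank measure's symmetry, and there is no ``natural inequality'' that singles out the correct one without already knowing $\textbf{B'}$. (Also, the defect is \emph{large} when $x_3-x_2$ is large, i.e.\ far from the face $x_2=x_3$, so your localisation is off.) Second, you project from only the single point $\textbf{B}$, leaving a genuinely $3$-dimensional projected polytope for which you give no decomposition; the paper's whole point is that the \emph{second} projection from $\textbf{B'}$ collapses the picture to the familiar $2$-dimensional shell of Figure~7, where the snake template is already in hand and only the hyperplane equations need to be recomputed. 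Without $\textbf{B'}$ you have neither the correct $\Delta_t$ nor a workable low-dimensional template.
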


\section{Geometric decompositions }
In this section, we construct various decompositions of $L(m)$ for $m=3,4,5$ and related polytopes, proving Theorems 3.1 and 3.7. The geometric decompositions we present, and in particular the coordinates of the projection points, were not found by trial and error, but were solved for at the end. At the start of Subsection 4.3 (in which Theorem 3.7 is proved), we shall discuss in detail the geometric intuition and tools we use to get our decompositions.

\subsection{Decomposing the polytope $L(3)$}
Set $\textbf{0}=(0,0,0)$, $\textbf{1}=(0,0,1)$, $\textbf{2}=(0,1,1)$, and $\textbf{3}=(1,1,1)$, and write $\overline{\textbf{ijk\ldots l}}$ for the barycenter of the face $\textbf{ijk} \ldots \textbf{l}$.

We create a geometric symmetric chain decomposition of $L(3)$ by projecting from $\overline{\textbf{0123}}$ (see Figure 6). The projected polytope is the union of the 4 facets $\textbf{012}$, $\textbf{013}$, $\textbf{023}$, and $\textbf{123}$, which form the boundary of $L(3)$. This has a geometric symmetric chain decomposition with one real snake $R$, and one fake snake $F$.

The real snake has turning sets $\textbf{0 }\overline{\textbf{02}}$, $\textbf{0 }\overline{\textbf{03}}$, $\textbf{1 }\overline{\textbf{13}}$, $\textbf{2 }\overline{\textbf{13}}$, 
$\textbf{3 }\overline{\textbf{13}}$.

The fake snake has turning sets
$\textbf{0 }\overline{\textbf{02}}$,
$\textbf{1 }\overline{\textbf{02}}$,
$\textbf{2 }\overline{\textbf{02}}$, $\textbf{3 }\overline{\textbf{03}}$, $\textbf{3 }\overline{\textbf{13}}$.

We note that each of the pairs $\textbf{0}$ and $\textbf{3}$, and $\overline{\textbf{02}}$ and $\overline{\textbf{13}}$ has complementary ranks.

Note the directions of the swipes are determined by the turning sets (e.g. the first swipe of the real snake is in the direction $e_1$).
\begin{figure}[H]
\centering
\begin{tikzpicture}
\def\ax{0}; \def\ay{8};
\def\bx{0}; \def\by{3};
\def\cx{4}; \def\cy{0};
\def\dx{11}; \def\dy{3};
\draw (\ax,\ay) node[left] {\textbf{0}};
\draw (\bx,\by) node[left] {\textbf{1}};
\draw (\cx,\cy) node[below] {\textbf{2}};
\draw (\dx,\dy) node[right] {\textbf{3}};

\foreach \k in {1,2,...,7}
{
\draw[->, shorten >=0.125cm] ({(\k/8)*(\cx/2-\ax/2)+\ax},{\ay-(\k/8)*(\ay/2-\cy/2)})--({(\k/8)*(\dx/2-\ax/2)+\ax},{\ay-(\k/8)*(\ay/2-\dy/2)});
\draw[->, shorten >=0.125cm] ({(\k/8)*(\dx/2-\ax/2)+\ax},{\ay-(\k/8)*(\ay/2-\dy/2)})--({(\k/8)*(\dx/2-\bx/2)+\bx},{\by-(\k/8)*(\by/2-\dy/2)});
\draw [->, shorten >=0.125cm] ({(\k/8)*(\dx/2-\bx/2)+\bx},{\by-(\k/8)*(\by/2-\dy/2)})--({(\k/8)*(\dx/2+\bx/2-\cx)+\cx},{(\k/8)*(\dy/2+\by/2-\cy)+\cy});
\draw [->, shorten >=0.125cm] ({(\k/8)*(\dx/2+\bx/2-\cx)+\cx},{(\k/8)*(\dy/2+\by/2-\cy)+\cy})-- ({(\k/8)*(\bx/2-\dx/2)+\dx},{\dy-(\k/8)*(\dy/2-\by/2)});
}

{
\def\k{0};
\draw[->, shorten >=0.125cm] ({(\k/8)*(\dx/2-\ax/2)+\ax},{\ay-(\k/8)*(\ay/2-\dy/2)})--({(\k/8)*(\dx/2-\bx/2)+\bx},{\by-(\k/8)*(\by/2-\dy/2)});
\draw [->, shorten >=0.125cm] ({(\k/8)*(\dx/2-\bx/2)+\bx},{\by-(\k/8)*(\by/2-\dy/2)})--({(\k/8)*(\dx/2+\bx/2-\cx)+\cx},{(\k/8)*(\dy/2+\by/2-\cy)+\cy});
\draw [->, shorten >=0.125cm] ({(\k/8)*(\dx/2+\bx/2-\cx)+\cx},{(\k/8)*(\dy/2+\by/2-\cy)+\cy})-- ({(\k/8)*(\bx/2-\dx/2)+\dx},{\dy-(\k/8)*(\dy/2-\by/2)});
}

{
\def\k{8}
\draw[->, shorten >=0.125cm] ({(\k/8)*(\cx/2-\ax/2)+\ax},{\ay-(\k/8)*(\ay/2-\cy/2)})--({(\k/8)*(\dx/2-\ax/2)+\ax},{\ay-(\k/8)*(\ay/2-\dy/2)});
\draw[->, shorten >=0.125cm] ({(\k/8)*(\dx/2-\ax/2)+\ax},{\ay-(\k/8)*(\ay/2-\dy/2)})--({(\k/8)*(\dx/2-\bx/2)+\bx},{\by-(\k/8)*(\by/2-\dy/2)});
}

\draw[gray, very thin] (\ax,\ay)--(\bx,\by)--(\cx,\cy)--(\dx,\dy)--cycle;
\draw[gray, very thin] (\bx,\by)--(\dx,\dy);
\draw[gray, very thin] (\ax,\ay)--(\cx,\cy);
\draw[gray, very thin] (\bx,\by)--(\ax/2+\cx/2,\ay/2+\cy/2);

\draw[gray,very thin] (\ax/2+\cx/2,\ay/2+\cy/2)--(\ax/2+\dx/2,\ay/2+\dy/2)--(\bx/2+\dx/2,\by/2+\dy/2)--(\cx,\cy);

\draw[very thick] (\ax,\ay)--(\ax/2+\cx/2,\ay/2+\cy/2);
\draw[very thick] (\ax,\ay)--(\ax/2+\dx/2,\ay/2+\dy/2);
\draw[very thick] (\bx,\by)--(\bx/2+\dx/2,\by/2+\dy/2);
\draw[very thick] (\cx,\cy)--(\bx/2+\dx/2,\by/2+\dy/2);
\draw[very thick] (\dx,\dy)--(\bx/2+\dx/2,\by/2+\dy/2);

\foreach \k in {1,2,...,7}
{
\draw[->, dotted, shorten >=0.125cm] ({(\k/8)*(\cx/2-\ax/2)+\ax},{\ay-(\k/8)*(\ay/2-\cy/2)})--({(\k/8)*(\ax/2+\cx/2)+\ax},{(\k/8)*(\ay/2+\cy/2-\by)+\by});
\draw[->, dotted, shorten >=0.125cm] ({(\k/8)*(\ax/2+\cx/2)+\ax},{(\k/8)*(\ay/2+\cy/2-\by)+\by})--({(\k/8)*(\ax/2-\cx/2)+\cx},{\cy-(\k/8)*(\cy/2-\ay/2)});
\draw[->, dotted, shorten >=0.125cm] ({(\k/8)*(\ax/2-\cx/2)+\cx},{\cy+(\k/8)*(\ay/2-\cy/2)})--({(\k/8)*(\ax/2-\dx/2)+\dx},{\dy+(\k/8)*(\ay/2-\dy/2)});
\draw[->, dotted, shorten >=0.125cm] ({(\k/8)*(\ax/2-\dx/2)+\dx},{\dy+(\k/8)*(\ay/2-\dy/2)})--({(\k/8)*(\bx/2-\dx/2)+\dx},{\dy+(\k/8)*(\by/2-\dy/2)});
}

{
\def\k{0};
\draw[->, dotted, shorten >=0.125cm] ({(\k/8)*(\cx/2-\ax/2)+\ax},{\ay-(\k/8)*(\ay/2-\cy/2)})--({(\k/8)*(\ax/2+\cx/2)+\ax},{(\k/8)*(\ay/2+\cy/2-\by)+\by});
\draw[->, dotted, shorten >=0.125cm] ({(\k/8)*(\ax/2+\cx/2)+\ax},{(\k/8)*(\ay/2+\cy/2-\by)+\by})--({(\k/8)*(\ax/2-\cx/2)+\cx},{\cy-(\k/8)*(\cy/2-\ay/2)});
\draw[->, dotted, shorten >=0.125cm] ({(\k/8)*(\ax/2-\cx/2)+\cx},{\cy+(\k/8)*(\ay/2-\cy/2)})--({(\k/8)*(\ax/2-\dx/2)+\dx},{\dy+(\k/8)*(\ay/2-\dy/2)});
}

{
\def\k{8}
\draw[->, dotted, shorten >=0.125cm] ({(\k/8)*(\ax/2-\cx/2)+\cx},{\cy+(\k/8)*(\ay/2-\cy/2)})--({(\k/8)*(\ax/2-\dx/2)+\dx},{\dy+(\k/8)*(\ay/2-\dy/2)});
\draw[->, dotted, shorten >=0.125cm] ({(\k/8)*(\ax/2-\dx/2)+\dx},{\dy+(\k/8)*(\ay/2-\dy/2)})--({(\k/8)*(\bx/2-\dx/2)+\dx},{\dy+(\k/8)*(\by/2-\dy/2)});
}

\end{tikzpicture}
\caption{Geometric symmetric chain decomposition for the projected polytope of $L(3)$.}
\end{figure}

The table below shows that the strong hyperplane condition is satisfied. Therefore we get a symmetric chain decomposition of $L(3,n)$ for all $n$ by Corollary 3.4.

\begin{center}
\begin{tabular}{| c | c | c | c | c| c |}
\hline
Turning Set & \# (Real)& \# (Fake)& Complexity & Directions In & Directions Out\\
\hline
$x+y-z=0$ &  & 0 &1 & & z\\
$-x+y+z=1$ &  & 1 & 1 & z & y\\
$x+y-z=0$ & 0 & 2 &1 & y & x\\
$x-2y+z=0$ & 1 & 3 & 1 & x & z\\
$-x+y+z=1$ & 2 & 4 & 1 & z & y\\
$x+y-z=0$ & 3 &  &1 & y & x\\
$-x+y+z=1$ & 4 &  & 1 & x & \\
\hline
\end{tabular}
\end{center}

This decomposition happens to be the same as the one found in \cite{xiangdong} by computer search (there, the decomposition and verification were done by computer). 

\subsection{Decomposing the polytope $L(4)$}
Set $\textbf{0}=(0,0,0,0)$, $\textbf{1}=(0,0,0,1)$, $\textbf{2}=(0,0,1,1)$, $\textbf{3}=(0,1,1,1)$,
$\textbf{4}=(1,1,1,1)$, and write $\overline{\textbf{ijk\ldots l}}$ for the barycenter of the face $\textbf{ijk}\ldots \textbf{l}$.

We create a geometric symmetric chain decomposition of $L(4)$ by projecting from the points $\overline{\textbf{123}}$ and $\overline{\textbf{04}}$ (see Figure 7). The projected polytope is the union of the 6 faces $\textbf{012}$, $\textbf{013}$, $\textbf{023}$, $\textbf{124}$, $\textbf{134}$, $\textbf{234}$. This has a geometric symmetric chain decomposition with one real snake $R$, and one fake snake $F$.

The real snake has turning sets $\textbf{0 }\overline{\textbf{02}}$, $\textbf{0 }\overline{\textbf{03}}$, $\textbf{1 }\overline{\textbf{13}}$, $\textbf{1 }\overline{\textbf{14}}$, 
$\textbf{2 }\overline{\textbf{24}}$
, 
$\textbf{3 }\overline{\textbf{24}}$
, 
$\textbf{4 }\overline{\textbf{24}}$.

The fake snake has turning sets
$\textbf{0 }\overline{\textbf{02}}$, $\textbf{1 }\overline{\textbf{02}}$, $\textbf{2 }\overline{\textbf{02}}$, $\textbf{3 }\overline{\textbf{03}}$, 
$\textbf{3 }\overline{\textbf{13}}$
, 
$\textbf{4 }\overline{\textbf{14}}$
, 
$\textbf{4 }\overline{\textbf{24}}$.

We note that each of the pairs $\textbf{0}$ and $\textbf{4}$, and $\overline{\textbf{02}}$ and $\overline{\textbf{24}}$ has complementary ranks.

\begin{figure}[H]
\centering
\begin{tikzpicture}[scale=0.87]
\def\ax{0}; \def\ay{8};
\def\bx{0}; \def\by{3};
\def\cx{4}; \def\cy{0};
\def\dx{11}; \def\dy{3};
\def\ex{4}; \def\ey{-5};
\draw (\ax,\ay) node[left] {\textbf{0}};
\draw (\bx,\by) node[left] {\textbf{1}};
\draw (\cx,\cy) node[above] {\textbf{2}};
\draw (\dx,\dy) node[right] {\textbf{3}};
\draw (\ex,\ey) node[below] {\textbf{4}};

\foreach \k in {1,2,...,8}
{
\draw[->, shorten >=0.125cm] ({(\k/8)*(\cx/2-\ax/2)+\ax},{\ay-(\k/8)*(\ay/2-\cy/2)})--({(\k/8)*(\dx/2-\ax/2)+\ax},{\ay-(\k/8)*(\ay/2-\dy/2)});
\draw[->, shorten >=0.125cm] ({(\k/8)*(\dx/2-\ax/2)+\ax},{\ay-(\k/8)*(\ay/2-\dy/2)})--({(\k/8)*(\dx/2-\bx/2)+\bx},{\by-(\k/8)*(\by/2-\dy/2)});
}

{
\def\k{0};
\draw[->, shorten >=0.125cm] ({(\k/8)*(\dx/2-\ax/2)+\ax},{\ay-(\k/8)*(\ay/2-\dy/2)})--({(\k/8)*(\dx/2-\bx/2)+\bx},{\by-(\k/8)*(\by/2-\dy/2)});
}

\draw[gray, very thin] (\ax,\ay)--(\bx,\by)--(\cx,\cy)--(\dx,\dy)--cycle;
\draw[gray, very thin] (\bx,\by)--(\dx,\dy);
\draw[gray, very thin] (\ax,\ay)--(\cx,\cy);

\draw[gray,very thin] (\ax/2+\cx/2,\ay/2+\cy/2)--(\ax/2+\dx/2,\ay/2+\dy/2)--(\bx/2+\dx/2,\by/2+\dy/2);

\draw[very thick] (\ax,\ay)--(\ax/2+\cx/2,\ay/2+\cy/2);
\draw[very thick] (\ax,\ay)--(\ax/2+\dx/2,\ay/2+\dy/2);
\draw[very thick] (\bx,\by)--(\bx/2+\dx/2,\by/2+\dy/2);
\draw[very thick] (\bx,\by)--(\bx/2+\ex/2,\by/2+\ey/2);

\foreach \k in {1,2,...,7}
{
\draw[->, dotted, shorten >=0.125cm] ({(\k/8)*(\cx/2-\ax/2)+\ax},{\ay-(\k/8)*(\ay/2-\cy/2)})--({(\k/8)*(\ax/2+\cx/2-\bx)+\bx},{(\k/8)*(\ay/2+\cy/2-\by)+\by});
\draw[->, dotted, shorten >=0.125cm] ({(\k/8)*(\ax/2+\cx/2-\bx)+\bx},{(\k/8)*(\ay/2+\cy/2-\by)+\by})--({(\k/8)*(\ax/2-\cx/2)+\cx},{\cy-(\k/8)*(\cy/2-\ay/2)});
\draw[->, dotted, shorten >=0.125cm] ({(\k/8)*(\ax/2-\cx/2)+\cx},{\cy+(\k/8)*(\ay/2-\cy/2)})--({(\k/8)*(\ax/2-\dx/2)+\dx},{\dy+(\k/8)*(\ay/2-\dy/2)});
\draw[->, dotted, shorten >=0.125cm] ({(\k/8)*(\ax/2-\dx/2)+\dx},{\dy+(\k/8)*(\ay/2-\dy/2)})--({(\k/8)*(\bx/2-\dx/2)+\dx},{\dy+(\k/8)*(\by/2-\dy/2)});
}

{
\def\k{0};
\draw[->, dotted, shorten >=0.125cm] ({(\k/8)*(\cx/2-\ax/2)+\ax},{\ay-(\k/8)*(\ay/2-\cy/2)})--({(\k/8)*(\ax/2+\cx/2-\bx)+\bx},{(\k/8)*(\ay/2+\cy/2-\by)+\by});
\draw[->, dotted, shorten >=0.125cm] ({(\k/8)*(\ax/2+\cx/2-\bx)+\bx},{(\k/8)*(\ay/2+\cy/2-\by)+\by})--({(\k/8)*(\ax/2-\cx/2)+\cx},{\cy-(\k/8)*(\cy/2-\ay/2)});
\draw[->, dotted, shorten >=0.125cm] ({(\k/8)*(\ax/2-\cx/2)+\cx},{\cy+(\k/8)*(\ay/2-\cy/2)})--({(\k/8)*(\ax/2-\dx/2)+\dx},{\dy+(\k/8)*(\ay/2-\dy/2)});
}

{
\def\k{8};
\draw[->, dotted, shorten >=0.125cm] ({(\k/8)*(\ax/2-\cx/2)+\cx},{\cy+(\k/8)*(\ay/2-\cy/2)})--({(\k/8)*(\ax/2-\dx/2)+\dx},{\dy+(\k/8)*(\ay/2-\dy/2)});
\draw[->, dotted, shorten >=0.125cm] ({(\k/8)*(\ax/2-\dx/2)+\dx},{\dy+(\k/8)*(\ay/2-\dy/2)})--({(\k/8)*(\bx/2-\dx/2)+\dx},{\dy+(\k/8)*(\by/2-\dy/2)});
}

\draw[gray, very thin] (\ex,\ey)--(\bx,\by)--(\cx,\cy)--(\dx,\dy)--cycle;
\draw[gray, very thin] (\ex,\ey)--(\cx,\cy);

\draw[very thick] (\dx,\dy)--(\ex/2+\cx/2,\ey/2+\cy/2);
\draw[very thick] (\ex,\ey)--(\cx,\cy);

\foreach \k in {1,2,...,8}
{
\draw[<-,dotted, shorten <=0.125cm] ({(\k/8)*(\cx/2-\ex/2)+\ex},{\ey-(\k/8)*(\ey/2-\cy/2)})--({(\k/8)*(\bx/2-\ex/2)+\ex},{\ey-(\k/8)*(\ey/2-\by/2)});
\draw[<-,dotted, shorten <=0.125cm] ({(\k/8)*(\bx/2-\ex/2)+\ex},{\ey-(\k/8)*(\ey/2-\by/2)})--({(\k/8)*(\bx/2-\dx/2)+\dx},{\dy-(\k/8)*(\dy/2-\by/2)});
}

{
\def\k{0};
\draw[<-,dotted, shorten <=0.125cm] ({(\k/8)*(\bx/2-\ex/2)+\ex},{\ey-(\k/8)*(\ey/2-\by/2)})--({(\k/8)*(\bx/2-\dx/2)+\dx},{\dy-(\k/8)*(\dy/2-\by/2)});
}

\foreach \k in {1,2,...,7}
{
\draw[<-, shorten <=0.125cm] ({(\k/8)*(\cx/2-\ex/2)+\ex},{\ey-(\k/8)*(\ey/2-\cy/2)})--({(\k/8)*(\ex/2+\cx/2-\dx)+\dx},{(\k/8)*(\ey/2+\cy/2-\dy)+\dy});
\draw[<-, shorten <=0.125cm] ({(\k/8)*(\ex/2+\cx/2-\dx)+\dx},{(\k/8)*(\ey/2+\cy/2-\dy)+\dy})--({(\k/8)*(\ex/2-\cx/2)+\cx},{\cy-(\k/8)*(\cy/2-\ey/2)});
\draw[<-, shorten <=0.125cm] ({(\k/8)*(\ex/2-\cx/2)+\cx},{\cy+(\k/8)*(\ey/2-\cy/2)})--({(\k/8)*(\ex/2-\bx/2)+\bx},{\by+(\k/8)*(\ey/2-\by/2)});
\draw[<-, shorten <=0.125cm] ({(\k/8)*(\ex/2-\bx/2)+\bx},{\by+(\k/8)*(\ey/2-\by/2)})--({(\k/8)*(\dx/2-\bx/2)+\bx},{\by+(\k/8)*(\dy/2-\by/2)});
}

{
\def\k{0};
\draw[<-, shorten <=0.125cm] ({(\k/8)*(\cx/2-\ex/2)+\ex},{\ey-(\k/8)*(\ey/2-\cy/2)})--({(\k/8)*(\ex/2+\cx/2-\dx)+\dx},{(\k/8)*(\ey/2+\cy/2-\dy)+\dy});
\draw[<-, shorten <=0.125cm] ({(\k/8)*(\ex/2+\cx/2-\dx)+\dx},{(\k/8)*(\ey/2+\cy/2-\dy)+\dy})--({(\k/8)*(\ex/2-\cx/2)+\cx},{\cy-(\k/8)*(\cy/2-\ey/2)});
\draw[<-, shorten <=0.125cm] ({(\k/8)*(\ex/2-\cx/2)+\cx},{\cy+(\k/8)*(\ey/2-\cy/2)})--({(\k/8)*(\ex/2-\bx/2)+\bx},{\by+(\k/8)*(\ey/2-\by/2)});
}

{
\def\k{8};
\draw[<-, shorten <=0.125cm] ({(\k/8)*(\ex/2-\cx/2)+\cx},{\cy+(\k/8)*(\ey/2-\cy/2)})--({(\k/8)*(\ex/2-\bx/2)+\bx},{\by+(\k/8)*(\ey/2-\by/2)});
\draw[<-, shorten <=0.125cm] ({(\k/8)*(\ex/2-\bx/2)+\bx},{\by+(\k/8)*(\ey/2-\by/2)})--({(\k/8)*(\dx/2-\bx/2)+\bx},{\by+(\k/8)*(\dy/2-\by/2)});
}

\draw[gray, very thin] (\bx,\by)--(\ax/2+\cx/2,\ay/2+\cy/2);

\end{tikzpicture}
\caption{Geometric symmetric chain decomposition for the projected polytope of $L(4)$.}
\end{figure}

The table below shows that the strong hyperplane condition is satisfied. Therefore we get a symmetric chain decomposition of $L(4,n)$ for all $n$ by Corollary 3.4.

\begin{center}
\begin{tabular}{| c | c | c | c | c| c |}
\hline
Turning Set & \# (Real)& \# (Fake)& Complexity & Directions In & Directions Out\\
\hline
$-x+y+z-w=0$ & 0,4,6& 0,2,6&1 & x,z & y,w \\
$2x-2y+z+w=1$ &  &  1&1  & w & z\\
$y-2z+w=0$ &  1&  3&1  & y & w\\
$x+w=1$ & 2& 4&1 & w & x\\
$x-2y+z=0$ &  3&  5&1  & x & z\\
$x+y-2z+2w=1$ & 5 & & 1&  y& x\\
\hline
\end{tabular}
\end{center}

This decomposition happens to be the one found in \cite{west}, where the method of finding the decomposition was not stated. Note that from our perspective, we clearly see the similarity of the decompositions of $L(3)$ and $L(4)$ coming from the heurstically similar natures of the decompositions of the associated two dimensional projected polytopes. Indeed, each two dimensional snake in the projected polytope of $L(4)$ is formed by gluing together two truncated snakes from the projected polytope of $L(3)$ (so using our method, one is inspired to find one of the decompositions from the other one, as we did).

\begin{rmk}
If we project from the points $\overline{\textbf{13}}$ and $\overline{\textbf{024}}$ instead, then there exists a similar decomposition of the projected polytope using again one real and one fake snake starting at the line segment $\textbf{0}$ $\overline{\textbf{02}}$ (see Figure 8). The table below shows that the strong hyperplane condition is satisfied, and so by Corollary 3.4 we get an alternative symmetric chain decomposition of the poset $L(4,n)$ as was found in \cite{xiangdong} by computer methods.
\end{rmk}

\begin{center}
\begin{tabular}{| c | c | c | c | c| c |}
\hline
Turning Set & \# (Real)& \# (Fake)& Complexity & Directions In & Directions Out\\
\hline
$-x+y+z-w=0$ & 0,2,4,6&0,2,4,6&1 &  x,z& y,w \\
$-y+z+w=1$ &  3&1,5  &1  & w & z\\
$x+y-z=0$ &  1,5&3  &1  & y & x\\
\hline
\end{tabular}
\end{center}

\begin{figure}[H]
\centering
\begin{tikzpicture}[scale=0.82]
\def\bx{1}; \def\by{8};
\def\ax{0}; \def\ay{3};
\def\cx{4.4}; \def\cy{1};
\def\ex{11}; \def\ey{3};
\def\dx{4.4}; \def\dy{-5};
\draw (\ax,\ay) node[left] {\textbf{0}};
\draw (\bx,\by) node[left] {\textbf{1}};
\draw (\cx,\cy) node[above] {\textbf{2}};
\draw (\dx,\dy) node[right] {\textbf{3}};
\draw (\ex,\ey) node[below] {\textbf{4}};

\draw[gray, very thin] (\ax,\ay)--(\ex,\ey);

\foreach \k in {1,2,...,8}
{
\draw[->, shorten >=0.125cm] ({(\k/8)*(\cx/2-\ax/2)+\ax},{\ay-(\k/8)*(\ay/2-\cy/2)})--({(\k/8)*(\dx/2-\ax/2)+\ax},{\ay-(\k/8)*(\ay/2-\dy/2)});
\draw[->, shorten >=0.125cm] ({(\k/8)*(\dx/2-\ax/2)+\ax},{\ay-(\k/8)*(\ay/2-\dy/2)})--({(\k/8)*(\ex/2-\ax/2)+\ax},{\ay-(\k/8)*(\ay/2-\ey/2)});
}

\draw[gray, very thin] (\ax,\ay)--(\bx,\by)--(\cx,\cy)--(\dx,\dy)--cycle;
\draw[gray, very thin] (\ax,\ay)--(\cx,\cy);

\draw[very thick] (\ax,\ay)--(\ax/2+\cx/2,\ay/2+\cy/2);
\draw[very thick] (\ax,\ay)--(\ax/2+\dx/2,\ay/2+\dy/2);
\draw[very thick] (\ax,\ay)--(\ax/2+\ex/2,\ay/2+\ey/2);
\draw[very thick] (\bx,\by)--(\bx/2+\ex/2,\by/2+\ey/2);

\foreach \k in {1,2,...,7}
{
\draw[->, dotted, shorten >=0.125cm] ({(\k/8)*(\cx/2-\ax/2)+\ax},{\ay-(\k/8)*(\ay/2-\cy/2)})--({(\k/8)*(\ax/2+\cx/2-\bx)+\bx},{(\k/8)*(\ay/2+\cy/2-\by)+\by});
\draw[->, dotted, shorten >=0.125cm] ({(\k/8)*(\ax/2+\cx/2-\bx)+\bx},{(\k/8)*(\ay/2+\cy/2-\by)+\by})--({(\k/8)*(\ax/2-\cx/2)+\cx},{\cy-(\k/8)*(\cy/2-\ay/2)});
\draw[->, dotted, shorten >=0.125cm] ({(\k/8)*(\ax/2-\cx/2)+\cx},{\cy+(\k/8)*(\ay/2-\cy/2)})--({(\k/8)*(\ax/2-\dx/2)+\dx},{\dy+(\k/8)*(\ay/2-\dy/2)});
\draw[->, dotted, shorten >=0.125cm] ({(\k/8)*(\ax/2-\dx/2)+\dx},{\dy+(\k/8)*(\ay/2-\dy/2)})--({(\k/8)*(\ax/2-\ex/2)+\ex},{\ey+(\k/8)*(\ay/2-\ey/2)});
}

{
\def\k{0};
\draw[->, dotted, shorten >=0.125cm] ({(\k/8)*(\cx/2-\ax/2)+\ax},{\ay-(\k/8)*(\ay/2-\cy/2)})--({(\k/8)*(\ax/2+\cx/2-\bx)+\bx},{(\k/8)*(\ay/2+\cy/2-\by)+\by});
\draw[->, dotted, shorten >=0.125cm] ({(\k/8)*(\ax/2+\cx/2-\bx)+\bx},{(\k/8)*(\ay/2+\cy/2-\by)+\by})--({(\k/8)*(\ax/2-\cx/2)+\cx},{\cy-(\k/8)*(\cy/2-\ay/2)});
\draw[->, dotted, shorten >=0.125cm] ({(\k/8)*(\ax/2-\cx/2)+\cx},{\cy+(\k/8)*(\ay/2-\cy/2)})--({(\k/8)*(\ax/2-\dx/2)+\dx},{\dy+(\k/8)*(\ay/2-\dy/2)});
}

\draw[gray, very thin] (\ex,\ey)--(\bx,\by)--(\cx,\cy)--(\dx,\dy)--cycle;
\draw[gray, very thin] (\ex,\ey)--(\cx,\cy);

\draw[very thick] (\dx,\dy)--(\ex/2+\cx/2,\ey/2+\cy/2);
\draw[very thick] (\ex,\ey)--(\cx,\cy);

\foreach \k in {1,2,...,8}
{
\draw[<-,dotted, shorten <=0.125cm] ({(\k/8)*(\cx/2-\ex/2)+\ex},{\ey-(\k/8)*(\ey/2-\cy/2)})--({(\k/8)*(\bx/2-\ex/2)+\ex},{\ey-(\k/8)*(\ey/2-\by/2)});
\draw[<-,dotted, shorten <=0.125cm] ({(\k/8)*(\bx/2-\ex/2)+\ex},{\ey-(\k/8)*(\ey/2-\by/2)})--({(\k/8)*(\ax/2-\ex/2)+\ex},{\ey-(\k/8)*(\ey/2-\ay/2)});
}

{
\def\k{0};
\draw[<-,dotted, shorten <=0.125cm] ({(\k/8)*(\bx/2-\ex/2)+\ex},{\ey-(\k/8)*(\ey/2-\by/2)})--({(\k/8)*(\bx/2-\dx/2)+\dx},{\dy-(\k/8)*(\dy/2-\by/2)});
}

\foreach \k in {1,2,...,7}
{
\draw[<-, shorten <=0.125cm] ({(\k/8)*(\cx/2-\ex/2)+\ex},{\ey-(\k/8)*(\ey/2-\cy/2)})--({(\k/8)*(\ex/2+\cx/2-\dx)+\dx},{(\k/8)*(\ey/2+\cy/2-\dy)+\dy});
\draw[<-, shorten <=0.125cm] ({(\k/8)*(\ex/2+\cx/2-\dx)+\dx},{(\k/8)*(\ey/2+\cy/2-\dy)+\dy})--({(\k/8)*(\ex/2-\cx/2)+\cx},{\cy-(\k/8)*(\cy/2-\ey/2)});
\draw[<-, shorten <=0.125cm] ({(\k/8)*(\ex/2-\cx/2)+\cx},{\cy+(\k/8)*(\ey/2-\cy/2)})--({(\k/8)*(\ex/2-\bx/2)+\bx},{\by+(\k/8)*(\ey/2-\by/2)});
\draw[<-, shorten <=0.125cm] ({(\k/8)*(\ex/2-\bx/2)+\bx},{\by+(\k/8)*(\ey/2-\by/2)})--({(\k/8)*(\ex/2-\ax/2)+\ax},{\ay+(\k/8)*(\ey/2-\ay/2)});
}

{
\def\k{0};
\draw[<-, shorten <=0.125cm] ({(\k/8)*(\cx/2-\ex/2)+\ex},{\ey-(\k/8)*(\ey/2-\cy/2)})--({(\k/8)*(\ex/2+\cx/2-\dx)+\dx},{(\k/8)*(\ey/2+\cy/2-\dy)+\dy});
\draw[<-, shorten <=0.125cm] ({(\k/8)*(\ex/2+\cx/2-\dx)+\dx},{(\k/8)*(\ey/2+\cy/2-\dy)+\dy})--({(\k/8)*(\ex/2-\cx/2)+\cx},{\cy-(\k/8)*(\cy/2-\ey/2)});
\draw[<-, shorten <=0.125cm] ({(\k/8)*(\ex/2-\cx/2)+\cx},{\cy+(\k/8)*(\ey/2-\cy/2)})--({(\k/8)*(\ex/2-\bx/2)+\bx},{\by+(\k/8)*(\ey/2-\by/2)});
}

{
\def\k{8};
\draw[<-, shorten <=0.125cm] ({(\k/8)*(\ex/2-\cx/2)+\cx},{\cy+(\k/8)*(\ey/2-\cy/2)})--({(\k/8)*(\ex/2-\bx/2)+\bx},{\by+(\k/8)*(\ey/2-\by/2)});
\draw[<-, shorten <=0.125cm] ({(\k/8)*(\ex/2-\bx/2)+\bx},{\by+(\k/8)*(\ey/2-\by/2)})--({(\k/8)*(\ex/2-\ax/2)+\ax},{\ay+(\k/8)*(\ey/2-\ay/2)});
}

\draw[gray, very thin] (\bx,\by)--(\ax/2+\cx/2,\ay/2+\cy/2);

\end{tikzpicture}
\caption{Alternative geometric symmetric chain decomposition for $L(4)$.}
\end{figure}
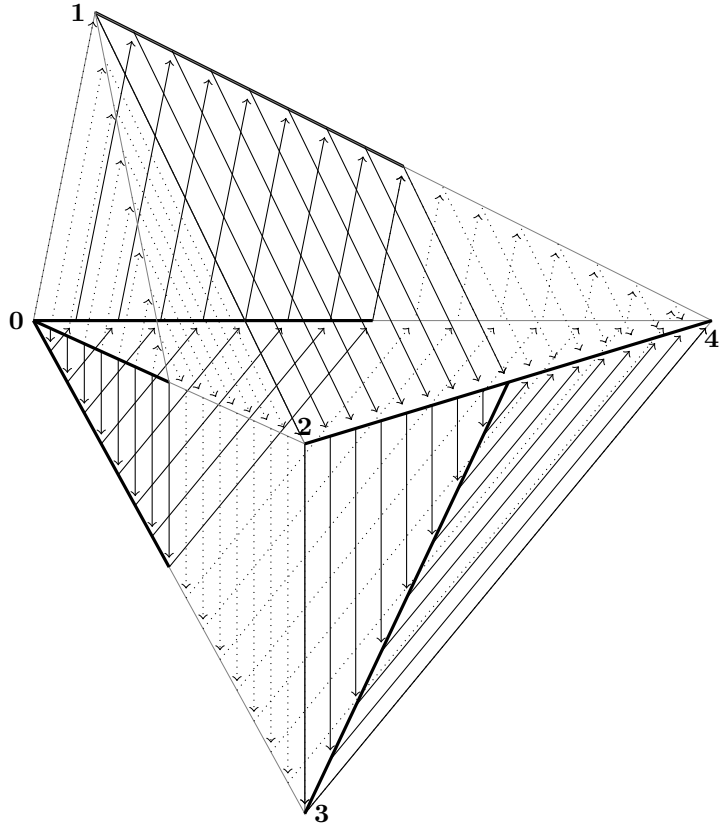

\subsection{Investigating the polytope $L(4)_{a,b,c,d}$}
Motivated by the geometric description above, we investigate the polytope $L(4)_{a,b,c,d}$, obtained by ``stretching'' the 4-dimensional polytope $L(4)$ in the coordinate directions by scaling the axes by $a,b,c,d$ respectively. As it turns out later, one necessary condition for $L(4)_{a,b,c,d}$ to admit an asymptotic geometric symmetric chain decomposition with the weak hyperplane condition is that $a=d$ and $b=c$ (otherwise, it is not even possible to get asymptotic decompositions for the posets $L(4,k)_{a,b,c,d}$). However, as we shall see, if we remove a suitable region to restore rank-symmetry from a certain one-parameter family of $L(4)_{a,b,c,d}$ specializing to $L(4)$, but not satisfying $b=c$ in general, we still get an asymptotic symmetric geometric chain decomposition with the weak hyperplane condition. This produces asymptotic decompositions of the underlying posets. 

The strong hyperplane condition involves a divisibility condition on the coefficients of the hyperplane containing the turning set, whereas the weak hyperplane condition can be expressed topologically by requiring that a certain direction is contained in the hyperplane. Consequently, we first aim to satisfy the weak hyperplane condition, and only after do we check if the strong hyperplane condition is satisfied.

We remark that for a turning set of both a swipe of direction $i$ and a swipe of direction $j$ ($i \ne j$), the weak hyperplane condition is equivalent to saying that the direction vector $e_i-e_j$ lies inside the turning set.

The strategy will be to use the geometric decomposition of the two dimensional projected polytope from Figure 7 as a skeleton of the decomposition that we seek, varying the vertices of the turning sets and in particular the coning off points, while trying to keep them on the same faces, seeking to ensure that the symmetry condition and the geometric condition from the previous paragraph are attained. The construction involves coning off the two dimensional surface at two points which will be constrained by the aforementioned conditions.

By abuse of notation, we use the previous bolded notation from Subsection 4.2 for both $L(4)$ and $L(4)_{a,b,c,d}$. One of the coning off points $\textbf{B}$ was the midpoint of $\textbf{04}$, and we constrain it to remain on this line segment. As it must have middle rank, it remains the midpoint of $\textbf{04}$. The other coning off point $\textbf{B'}$ was the barycenter of $\textbf{123}$, and we constrain it to lie on one of the two faces $\textbf{0123}$ and $\textbf{1234}$ on the boundary of $L(4)_{a,b,c,d}$ which do not include $\textbf{B}$.

What follows now is an investigation of the constraints on $a,b,c,d$ and a geometric construction of the desired point $\textbf{B'}$. As $\textbf{B'}$ might not lie on $\textbf{123}$, the resulting coned off polytope will miss a region of $L(4)_{a,b,c,d}$. We remark that we could have instead used linear algebra (possibly aided by computer algebra software) to produce the construction below in a rather straightforward way, but the geometric construction motivates a priori why such a construction is possible.

In the projected polytope of $L(4)$ in Figure 7, the second turning set of the fake snake was the line segment $\textbf{1p}$, and the second last turning set of the real snake was the line segment $\textbf{3q}$. We constrain $\textbf{p}$ and $\textbf{q}$ in $L(4)_{a,b,c,d}$ to lie on the line segments $\textbf{02}$ and $\textbf{24}$ respectively.

We now cone off the two dimensional surface at $\textbf{B}$ and $\textbf{B'}$. For the turning set $\textbf{1pBB'}$ to satisfy the weak hyperplane condition, $\textbf{1pBB'}$ must contain the direction vector $e_3-e_4$. This is equivalent to saying that $\textbf{1p}$ is the angle bisector of the right angle $\angle \textbf{012}$, or (see \cite{Johnson}) that $|\textbf{01}|/|\textbf{12}|=|\textbf{0p}|/|\textbf{p2}|$. Similarly, $\textbf{3q}$ is the angle bisector of the right angle $\angle\textbf{234}$, or equivalently $|\textbf{43}|/|\textbf{32}|=|\textbf{4q}|/|\textbf{q2}|$. By parallelism, the chain through $\textbf{p},\textbf{q}$ divides the line segments $\textbf{02},\textbf{03},\textbf{13},\textbf{14},\textbf{24}$ in the same ratio. Therefore $|\textbf{01}|/|\textbf{12}|=|\textbf{0p}|/|\textbf{p2}|=|\textbf{q2}|/|\textbf{4q}|=|\textbf{32}|/|\textbf{43}|$ (i.e. $ad=bc$).

As $\textbf{1p}$ is parallel to $e_3-e_4$, the rank of $\textbf{p}$ is the same as the rank of $\textbf{1}$. Similarly, the rank of $\textbf{q}$ is the same as the rank of $\textbf{3}$. As there is a symmetric chain with endpoints $\textbf{p}$ and $\textbf{q}$, we get the sum of the ranks of $\textbf{1}$ and $\textbf{3}$ is the same as the sum of the ranks of $\textbf{0}$ and $\textbf{4}$, which implies $|\textbf{01}|=|\textbf{34}|$ (i.e. $a=d$). Because of the relations $ad=bc$ and $a=d$, we restrict ourselves to the family of polytopes $L(4)_t=L(4)_{1,\frac{1}{t},t,1}$ for $t\ge 1$ ($0<t\le 1$ is identical by symmetry).

We now investigate the constraints on $\textbf{B'}$. There are up to $7$ distinct hyperplanes on which the turning sets lie. These are: $\textbf{1pBB'}$, $\textbf{02BB'}$, $\textbf{03BB'}$, $\textbf{13BB'}$, $\textbf{14BB'}$, $\textbf{24BB'}$, $\textbf{3qBB'}$, which must contain the direction vectors $e_4-e_3$, $e_3-e_2$, $e_2-e_4$, $e_4-e_1$, $e_1-e_3$, $e_3-e_2$, and $e_2-e_1$ respectively in order to satisfy the weak hyperplane condition. We also require that $\textbf{B'}$ has middle rank. Finally, as it turns out, since $t \ge 1$ we shall construct $\textbf{B'}$ to lie on the face $\textbf{0123}$ (if we had taken $0<t\le 1$, we would have constructed $\textbf{B'}$ to lie on $\textbf{1234}$).

Note that a two dimensional plane in $\mathbb{R}^4$ and a direction vector not parallel to the plane determine a hyperplane. The first and last hyperplane impose no constraint on $\textbf{B'}$ as $\textbf{1p}$ and $\textbf{3q}$ are parallel to $e_4-e_3$ and $e_2-e_1$ respectively. The hyperplane $\textbf{02BB'}$ is the same as the hyperplane $\textbf{24BB'}$, because $\textbf{B}$ lies on $\textbf{04}$. Also, the weak hyperplane condition for $\textbf{13BB'}$ is automatically satisfied, as the vector connecting the midpoint of $\textbf{04}$ (i.e. $\textbf{B}$) to the midpoint of $\textbf{13}$ is $\frac{1}{2}(e_4-e_1)$.

Therefore, we only have to consider the weak hyperplane conditions for $\textbf{14BB'}$, $\textbf{02BB'}$, and $\textbf{03BB'}$. As $\textbf{B}$ is the midpoint of $\textbf{04}$, these hyperplanes are equal to $\textbf{014B'}$, $\textbf{024B'}$, and $\textbf{034B'}$ respectively.

Define the hyperplanes $H_1,H_2,H_3$ as follows:
\begin{enumerate}
\item $H_1$ is generated by $\textbf{014}$ and direction vector $e_1-e_3$.
\item $H_2$ is generated by $\textbf{024}$ and direction vector $e_2-e_3$.
\item $H_3$ is generated by $\textbf{034}$ and direction vector $e_2-e_4$.
\end{enumerate}
The three hyperplane conditions above are equivalent to $\textbf{B'}\in H_1 \cap H_2 \cap H_3$.

We shall now construct by a purely geometric argument a point $\textbf{B'} \in H_1 \cap H_2 \cap H_3$ of middle rank lying inside $\textbf{0123}$.

\begin{figure}[H]
\centering
\begin{tikzpicture}[scale=0.82]
\def\ax{0}; \def\ay{8};
\def\bx{0}; \def\by{3};
\def\cx{7}; \def\cy{0};
\def\dx{11}; \def\dy{3};
\draw (\ax,\ay) node[left] {\textbf{0}};
\draw (\bx,\by) node[left] {\textbf{1}};
\draw (\cx,\cy) node[below] {\textbf{2}};
\draw (\dx,\dy) node[right] {\textbf{3}};
\draw (\ax,\ay)--(\bx,\by);
\draw (\bx,\by)--node[below] {$\textbf{r}_3$} (\cx,\cy);
\draw (\cx,\cy)--node[below] {$\textbf{r}_1$} (\dx,\dy);
\draw (\ax,\ay)--(\dx,\dy);
\draw (\bx,\by)-- node[above] {$\textbf{r}_2$}(\dx,\dy);
\draw (\ax,\ay)--(\cx,\cy);
\draw[dotted] (\cx,\cy)--(\bx/2+\dx/2,\by/2+\dy/2);
\draw[dotted] (\dx,\dy)--(\bx/2+\cx/2,\by/2+\cy/2);
\draw[dotted] (\bx,\by)--(\dx/2+\cx/2,\dy/2+\cy/2);
\draw[dotted] (\ax,\ay)--(\bx/2+\cx/2,\by/2+\cy/2);
\draw[dotted] (\ax,\ay)--(\bx/2+\dx/2,\by/2+\dy/2);
\draw[dotted] (\ax,\ay)--(\dx/2+\cx/2,\dy/2+\cy/2);
\draw[dotted] (\ax,\ay)--(\bx/3+\cx/3+\dx/3,\by/3+\cy/3+\dy/3) node[right] {$\textbf{r}$} ;
\end{tikzpicture}
\caption{Hyperplanes intersecting inside $\textbf{0123}$}
\end{figure}
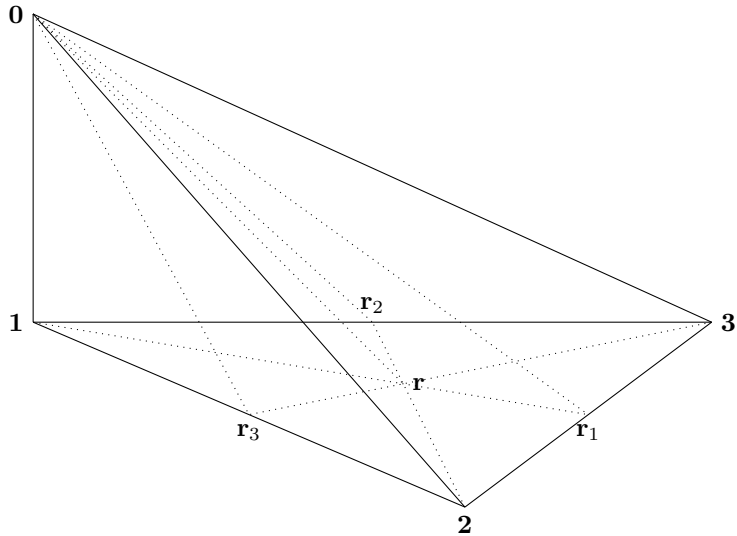

\begin{lem}
In the tetrahedron $\textbf{0123}$, the hyperplane $H_3$ intersects the line segment $\textbf{12}$ in a point $\textbf{r}_3$ such that $|\textbf{1}\textbf{r}_3|/|\textbf{r}_3\textbf{2}|=t$.
\end{lem}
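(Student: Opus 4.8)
The plan is to set up coordinates and directly compute where $H_3$ meets the edge $\textbf{12}$. Recall $\textbf{0}=(0,0,0,0)$, $\textbf{1}=(0,0,0,1)$, $\textbf{2}=(0,0,1,1)$, $\textbf{3}=(0,1,1,1)$, $\textbf{4}=(1,1,1,1)$ in $L(4)$, and in $L(4)_t=L(4)_{1,\frac{1}{t},t,1}$ the coordinate directions are stretched by $1,\frac{1}{t},t,1$; so the vertices become $\textbf{0}=(0,0,0,0)$, $\textbf{1}=(0,0,0,1)$, $\textbf{2}=(0,0,t,1)$, $\textbf{3}=(0,\frac{1}{t},t,1)$, $\textbf{4}=(1,\frac{1}{t},t,1)$. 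First I would write down the defining linear equation of $H_3$: it is the hyperplane through $\textbf{0}$, $\textbf{3}$, $\textbf{4}$ containing the direction $e_2-e_4$. Since $\textbf{0}$, $\textbf{3}$, $\textbf{4}$ all lie in it and $\textbf{0}$ is the origin, $H_3$ is the linear span of the three vectors $\textbf{3}-\textbf{0}=(0,\frac{1}{t},t,1)$, $\textbf{4}-\textbf{0}=(1,\frac{1}{t},t,1)$, and $e_2-e_4=(0,1,0,-1)$. I would find the normal covector $\sum a_k x_k = 0$ by solving the three orthogonality conditions; this is a routine $3\times 3$ linear solve (or a cross-product-type computation in $\mathbb{R}^4$).

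Next I would parametrize the segment $\textbf{12}$ as $\textbf{1}+s(\textbf{2}-\textbf{1}) = (0,0,st,1)$ for $s\in[0,1]$, substitute into the equation of $H_3$, and solve for $s$; then $|\textbf{1}\textbf{r}_3|/|\textbf{r}_3\textbf{2}| = s/(1-s)$, and the claim is that this equals $t$, i.e. $s = \frac{t}{t+1}$. The computation should collapse nicely: note $\textbf{2}-\textbf{1}=(0,0,t,0)$ is a multiple of $e_3$, so only the $e_3$-coefficient $a_3$ of the normal and the constant term (here $0$, plus the contribution of $\textbf{1}$) enter, making the solve essentially one equation in one unknown. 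I expect the normal to $H_3$ to be something like proportional to $(0, t, -1, 1)$ up to signs and scaling — one can sanity-check that $(0,\frac1t,t,1)$, $(1,\frac1t,t,1)$, $(0,1,0,-1)$ are all annihilated — after which plugging in $(0,0,st,1)$ gives $-st + 1 = 0$? That would give $s=1/t$, which is wrong for $t>1$, so I would recompute carefully; the correct normal must be chosen so that the $e_3$ and constant pieces produce $s = t/(t+1)$.

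The genuinely delicate point — the main obstacle — is bookkeeping the stretch factors correctly and not conflating $L(4)$ with $L(4)_t$; a sign or a reciprocal error in the coordinates of $\textbf{2}$, $\textbf{3}$, or in the stretched metric will throw off the ratio. I would double-check by an independent route: $H_3$ was defined precisely so that the turning set $\textbf{034B'}$ satisfies the weak hyperplane condition for the swipe-pair whose directions differ by $e_2-e_4$, and the geometric meaning (as in the angle-bisector discussion preceding the lemma, cf.\ \cite{Johnson}) is that $H_3$ cuts the relevant edges in the ratio governed by the side lengths $|\textbf{03}|$-type quantities; I would verify that this ratio, read off on the edge $\textbf{12}$ inside the face $\textbf{0123}$, indeed comes out to $t:1$. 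Once $s=\frac{t}{t+1}$ is confirmed, the statement $|\textbf{1}\textbf{r}_3|/|\textbf{r}_3\textbf{2}| = t$ follows immediately, and I would also remark that $\textbf{r}_3$ genuinely lies in the closed segment (i.e.\ $s\in[0,1]$), which holds for all $t\ge 1$, so $\textbf{r}_3$ is a legitimate point of the tetrahedron $\textbf{0123}$.
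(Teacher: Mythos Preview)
Your direct-computation approach is sound and, once executed correctly, yields the claim. Your guessed normal is indeed wrong; the correct normal to $H_3$ (through $\textbf{0}$, so a linear subspace) is proportional to $(0,\,t^2,\,-(t+1),\,t^2)$, as one checks against $\textbf{3}-\textbf{0}$, $\textbf{4}-\textbf{0}$, and $e_2-e_4$. Substituting your parametrization $(0,0,st,1)$ gives $-(t+1)st + t^2 = 0$, hence $s = t/(t+1)$ and $|\textbf{1r}_3|/|\textbf{r}_3\textbf{2}| = s/(1-s) = t$, with $s\in(0,1)$ for all $t\ge 1$.

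The paper takes a more synthetic route and avoids computing the normal of $H_3$ altogether. It \emph{defines} $\textbf{r}_3$ to be the point dividing $\textbf{12}$ in ratio $t:1$, introduces auxiliary points $\textbf{s}\in\textbf{02}$ and $\textbf{t}\in\textbf{03}$ dividing those segments in the same ratio $t:1$, and then computes directly (via similar triangles or coordinates) that $\textbf{r}_3 - \textbf{t} = (\textbf{r}_3-\textbf{s}) + (\textbf{s}-\textbf{t}) = \frac{1}{t+1}(e_4 - e_2)$. Since $\textbf{t}$ lies on $\textbf{03}\subset H_3$ and $e_2-e_4$ is a direction contained in $H_3$ by definition, this forces $\textbf{r}_3\in H_3$. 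The paper's argument is in keeping with its emphasis on purely geometric reasoning and transfers by symmetry to the companion statement for $\textbf{r}_1$; your coordinate approach is more mechanical but equally rigorous and arguably easier to verify line by line.
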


\begin{proof}

Let $H_3'$ be the restriction of $H_3$ to the affine span of $\textbf{0123}$. $H_3'$ is generated by $\textbf{03}$ and the direction vector $e_2-e_4$. Consider the point $\textbf{r}_3$ on the line segment $\textbf{12}$ such that $|\textbf{1}\textbf{r}_3|/|\textbf{r}_3\textbf{2}|=t$, and let $\textbf{s},\textbf{t}$ be the points on the line segments $\textbf{02}$ and $\textbf{03}$ respectively so that $|\textbf{0s}|/|\textbf{s2}|=t$, and $|\textbf{0t}|/|\textbf{t3}|=t$. By direct computation (or similar triangles), we get $\textbf{r}_3-\textbf{t}=(\textbf{r}_3-\textbf{s})+(\textbf{s}-\textbf{t})=\frac{1}{t+1}(e_4-e_2)$. The conclusion follows.

\end{proof}

By a similar argument in $\textbf{1234}$, we get that $\textbf{r}_1=H_1\cap\textbf{23}$ satisfies $|\textbf{2}\textbf{r}_1|/|\textbf{r}_1\textbf{3}|=t$.

Finally, $H_2$ contains the angle bisector $e_2-e_3$ of the right angle $\angle\textbf{123}$, so we get that $\textbf{r}_2=H_2 \cap \textbf{13}$ satisfies $|\textbf{3r}_2|/|\textbf{r}_2\textbf{1}|=|\textbf{32}|/|\textbf{21}|=\frac{1}{t^2}$.

Hence as $\frac{|\textbf{2}\textbf{r}_1|}{|\textbf{r}_1\textbf{3}|}\cdot\frac{|\textbf{3r}_2|}{|\textbf{r}_2\textbf{1}|}\cdot\frac{ |\textbf{1}\textbf{r}_3|}{|\textbf{r}_3\textbf{2}|}=1$, we get by Ceva's theorem (see \cite{Johnson}) that the line segments $\textbf{1r}_1$, $\textbf{2r}_2$, $\textbf{3r}_3$ intersect in a point $\textbf{r}$ (see Figure 9). Hence $H_1\cap H_2 \cap H_3\cap \textbf{0123}$ contains the line segment $\textbf{0r}$. As $\textbf{r}-\textbf{2}$ is a multiple of $e_2-e_3$, we get that the rank of $\textbf{r}$ is the rank of $\textbf{2}$, which is at least the middle rank as $t\ge 1$. Hence there is a point \textbf{B'} on the line segment $\textbf{0r}$ of middle rank.

We can now cone off at $\textbf{B}$ and $\textbf{B'}$, without self-intersection occuring, to get a geometric symmetric chain decomposition with the weak hyperplane condition of the resulting polytope $P_t=\overline{L(4)_t\setminus \text{conv}(\textbf{B'1234})}$ (where conv denotes convex hull). In particular if $t=1$ we get $P_1=L(4)$.

Even though the above geometric argument shows the weak hyperplane condition is satisfied for $P_t$, we can get to the same conclusion without geometric insight using the methods of linear algebra.

Set $\textbf{0}=(0,0,0,0)$, $\textbf{1}=(0,0,0,1)$, $\textbf{2}=(0,0,t,1)$, $\textbf{3}=(0,\frac{1}{t},t,1)$, $\textbf{4}=(1,\frac{1}{t},t,1)$, $\textbf{B}=(\frac{1}{2},\frac{1}{2t},\frac{t}{2},\frac{1}{2})$, $\textbf{B'}=(0,\frac{t+1}{2(t^2+t+1)},\frac{t(t+1)^2}{2(t^2+t+1)},\frac{t+1}{2t})$, and $P_t=\overline{L(4)_t\setminus \text{conv}(\textbf{B'1234})}$.

The table below shows that the weak hyperplane condition is satisfied. We put a $*$ above the directions corresponding to turning sets associated to a single swipe (they do not contribute to the weak hyperplane condition). Hence we get asymptotic decompositions of $P_t(n)$ by Theorem 3.6.

\begin{tabular}{| c | c | c | c | c| c |}
\hline
Turning Set & \# (Real)& \# (Fake)& Complexity & In & Out\\
\hline
$\frac{1}{t}x-y-z+tw=0$ & 0,4,6& 0,2,6&1 & $\text{x}^*$,z & y,$\text{w}^*$ \\
$\frac{t^3+2t^2+1}{t^2(t+1)}x-\frac{t^5+t^4+t^3+t}{t^2(t+1)}y+z+w=1$ &  &  1&1  & w & z\\
$y-\frac{t+1}{t^2}z+w=0$ &  1&  3&1  & y & w\\
$x+\frac{1-t^3}{1+t}y-\frac{1-t^3}{t^2(1+t)}z+w=1$ & 2& 4&1 & w & x\\
$x-t(1+t)y+z=0$ &  3&  5&1  & x & z\\
$x+y-\frac{t^4+t^3+t^2+1}{t^2(t+1)}z+\frac{t^5+2t^3+t^2}{t^2(t+1)}w=1$ & 5 &1 & 1&  y& x\\
\hline
\end{tabular}\\
This concludes the proof of Theorem 3.7. We now finish the subsection with the proof of a result alluded to at the beginning of the subsection.

\begin{thm}
A necessary condition for $L(4)_{a,b,c,d}$ to have an asymptotic geometric symmetric chain decomposition satisfying the weak hyperplane condition is that $a=d$ and $b=c$.
\end{thm}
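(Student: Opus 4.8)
The plan is to show that the hypothesised decomposition forces the rank distribution of $L(4)_{a,b,c,d}$ to be symmetric about its middle rank, and that for this particular simplex such symmetry already forces $a=d$ and $b=c$. \emph{Step 1.} By Theorem~3.6, an asymptotic geometric symmetric chain decomposition of $L(4)_{a,b,c,d}$ satisfying the weak hyperplane condition yields, for every $k$, a symmetric chain decomposition of $L(4,k)_{a,b,c,d}$ after deleting $O(k^3)$ points. Put $S=a+b+c+d$ and $\ell(x)=x_1+x_2+x_3+x_4$. A symmetric chain contributes one element to each rank level between its two endpoint ranks, which sum to the rank of the poset, so the rank profile of the surviving points is symmetric about half the poset's rank. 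Since $L(4,k)_{a,b,c,d}$ has $\Theta(k^4)$ points, with $\Theta(k^4)$ of them in any fixed rank window, the $O(k^3)$ deleted points are asymptotically negligible, the poset's middle rank tends to $S/2$, and letting $k\to\infty$ we conclude that the continuous density $g(\rho):=\frac{d}{d\rho}\operatorname{vol}\{x\in L(4)_{a,b,c,d}:\ell(x)\le\rho\}$ must satisfy $g(\rho)=g(S-\rho)$.

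\emph{Step 2.} Now $L(4)_{a,b,c,d}=\operatorname{diag}(a,b,c,d)\cdot L(4)$ is itself a $4$-simplex, with vertices $(0,0,0,0)$, $(0,0,0,d)$, $(0,0,c,d)$, $(0,b,c,d)$, $(a,b,c,d)$ of ranks $0<d<c+d<b+c+d<S$. By the classical fact that slicing a simplex along parallel hyperplanes produces a univariate B-spline, $g$ is, up to the positive constant $4!\operatorname{vol}(L(4)_{a,b,c,d})$, the B-spline whose knot multiset is exactly the set of vertex ranks $\{0,d,c+d,b+c+d,S\}$, and in particular its breakpoints are precisely these five numbers. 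Since the assignment (knot multiset)$\mapsto$(B-spline function) is injective, $g(\rho)=g(S-\rho)$ forces this multiset to be invariant under $\rho\mapsto S-\rho$, i.e.\ $\{0,d,c+d,b+c+d,S\}=\{0,a,a+b,a+b+c,S\}$; matching the two strictly increasing lists term by term gives $d=a$, $c+d=a+b$, and hence $b=c$.

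If one prefers to bypass B-spline theory, the same conclusion follows from the corner structure of the simplex: $F(\rho):=\operatorname{vol}\{x\in L(4)_{a,b,c,d}:\ell(x)\le\rho\}$ equals the single monomial $\frac{abc}{24\,S(b+c+d)(c+d)}\rho^4$ exactly on $[0,d]$ — this is a direct computation with the simplicial cone at the origin, the polynomial form changing once the slice first meets the opposite facet $\{x_4=d\}$ — and by symmetry $\operatorname{vol}(L(4)_{a,b,c,d})-F(\rho)$ is a single monomial in $S-\rho$ exactly on $[b+c+d,S]$. The identity $F(\rho)=\operatorname{vol}(L(4)_{a,b,c,d})-F(S-\rho)$ then equates these two monomial intervals, forcing $S-d=b+c+d$, i.e.\ $a=d$; a similar corner computation at $(0,0,c,d)$ shows $c+d$ is a genuine breakpoint of $g$, which must therefore be its own reflection, giving $c+d=S/2$ and hence $b=c$.

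The main obstacle is the injectivity invoked in Step~2 — equivalently, the fact that every vertex rank is a genuine breakpoint of $g$, so that no two polynomial pieces of $g$ accidentally coincide across a vertex. This is the one place where the argument is not purely formal: one must verify that as the slicing hyperplane $\{\ell=\rho\}$ sweeps past a vertex $v$, the volume swept out near $v$ is proportional to $(\rho-\ell(v))^4$ with a strictly positive coefficient, which holds because the tangent cone of the simplex at $v$ is full-dimensional and $\ell$ is non-constant on it. Granting this, Step~1 (the passage through Theorem~3.6) and the final bookkeeping with the five vertex ranks are routine.
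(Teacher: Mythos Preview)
Your proof is correct and follows essentially the same strategy as the paper's: pass through Theorem~3.6 to asymptotic rank-symmetry, deduce that the piecewise-polynomial volume profile of the simplex is symmetric about the middle rank, and conclude that the set of vertex ranks must itself be symmetric, forcing $a=d$ and $b=c$. You supply considerably more detail than the paper does (the B-spline framing and the verification that each vertex rank is a genuine breakpoint), but the underlying idea is identical.
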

\begin{proof}
By hypothesis, it follows that the underlying posets of $L(4)_{a,b,c,d}$ have an asymptotic chain decomposition. (Asymptotic) rank-symmetry of the underlying posets then forces the volume $f(\lambda)$ of $L(4)_{a,b,c,d}\cap \{x+y+z+w\le\lambda\}$ to be complementary about the middle rank. As $f(\lambda)$ is piecewise cubic with break points at the 5 vertices, this forces the ranks of the vertices to be complementary.
\end{proof}

\begin{rmk}
The polytope $L(4)_t$ with $t>1$ does not satisfy $a=d$ and $b=c$, and the proof above shows that the obstruction to creating an asymptotic geometric symmetric chain decomposition with the weak hyperplane condition is a failure of ``volume rank-symmetry''. Hence we needed to remove a region to restore this symmetry. It is interesting to note that therefore, the choice of $\textbf{B'}$ happens to be so that the volume function doesn't change when $\lambda$ passes the rank of $\textbf{2}$.
\end{rmk}

\subsection{The polytope $L(5)$}

Set $\textbf{0}=(0,0,0,0,0)$, $\textbf{1}=(0,0,0,0,1)$, $\textbf{2}=(0,0,0,1,1)$, $\textbf{3}=(0,0,1,1,1)$,
$\textbf{4}=(0,1,1,1,1)$,
$\textbf{5}=(1,1,1,1,1)$, and write $\overline{\textbf{ij}}$ for the midpoint of $\textbf{ij}$.

We create a geometric symmetric chain decomposition of $L(5)$ by projecting from the unique points of rank $\frac{5}{2}$ on the line segments $\textbf{13}$, $\textbf{24}$, and $\textbf{05}$ (see Figure 10). The projected polytope is the union of the 8 faces $\textbf{012}$, $\textbf{023}$, $\textbf{034}$, $\textbf{014}$, $\textbf{125}$, $\textbf{235}$, $\textbf{345}$, $\textbf{145}$. This has a geometric symmetric chain decomposition with one real snake $R$, and one fake snake $F$.

The real snake has turning sets $\textbf{0 }\overline{\textbf{02}}$, $\textbf{0 }\overline{\textbf{03}}$, $\textbf{0 }\overline{\textbf{04}}$, $\textbf{1 }\overline{\textbf{14}}$, 
$\textbf{1 }\overline{\textbf{15}}$,
$\textbf{2 }\overline{\textbf{25}}$,
$\textbf{3 }\overline{\textbf{35}}$,
$\textbf{4 }\overline{\textbf{35}}$, $\textbf{5 }\overline{\textbf{35}}$.

The fake snake has turning sets
$\textbf{0 }\overline{\textbf{02}}$, $\textbf{1 }\overline{\textbf{02}}$, $\textbf{2 }\overline{\textbf{02}}$, $\textbf{3 }\overline{\textbf{03}}$, 
$\textbf{4 }\overline{\textbf{04}}$, $\textbf{4 }\overline{\textbf{14}}$,
$\textbf{5 }\overline{\textbf{15}}$,
$\textbf{5 }\overline{\textbf{25}}$,
$\textbf{5 }\overline{\textbf{35}}$.

We note that each of the pairs $\textbf{0}$ and $\textbf{5}$, and $\overline{\textbf{02}}$ and $\overline{\textbf{35}}$ has complementary ranks.

\begin{figure}[H]
\centering
\begin{tikzpicture}[scale=0.96]
\def\ax{0}; \def\ay{8};
\def\bx{0}; \def\by{3};
\def\ccx{4}; \def\ccy{0};
\def\cx{7}; \def\cy{0};
\def\dx{11}; \def\dy{3};
\def\ex{4}; \def\ey{-5};
\draw (\ax,\ay) node[left] {\textbf{0}};
\draw (\bx,\by) node[left] {\textbf{1}};
\draw (\ccx,\ccy) node[above] {\textbf{2}};
\draw (\cx,\cy) node[above] {\textbf{3}};
\draw (\dx,\dy) node[right] {\textbf{4}};
\draw (\ex,\ey) node[below] {\textbf{5}};

\draw[gray, very thin] (\ax,\ay)--(\bx,\by);
\draw[gray, very thin] (\ax,\ay)--(\ccx,\ccy);
\draw[gray, very thin] (\ax,\ay)--(\cx,\cy);
\draw[gray, very thin] (\ax,\ay)--(\dx,\dy);

\draw[gray, very thin] (\ex,\ey)--(\bx,\by);
\draw[gray, very thin] (\ex,\ey)--(\ccx,\ccy);
\draw[gray, very thin] (\ex,\ey)--(\cx,\cy);
\draw[gray, very thin] (\ex,\ey)--(\dx,\dy);

\draw[gray, very thin] (\bx,\by)--(\ccx,\ccy);
\draw[gray, very thin] (\ccx,\ccy)--(\cx,\cy);
\draw[gray, very thin] (\cx,\cy)--(\dx,\dy);
\draw[gray, very thin] (\dx,\dy)--(\bx,\by);

\draw[very thick] (\ax,\ay)--(\ax/2+\ccx/2,\ay/2+\ccy/2);
\draw[very thick] (\ax,\ay)--(\ax/2+\cx/2,\ay/2+\cy/2);
\draw[very thick] (\ax,\ay)--(\ax/2+\dx/2,\ay/2+\dy/2);
\draw[very thick] (\bx,\by)--(\bx/2+\dx/2,\by/2+\dy/2);
\draw[very thick] (\bx,\by)--(\bx/2+\ex/2,\by/2+\ey/2);
\draw[very thick] (\ccx,\ccy)--(\ccx/2+\ex/2,\ccy/2+\ey/2);
\draw[very thick] (\dx,\dy)--(\cx/2+\ex/2,\cy/2+\ey/2);
\draw[very thick] (\cx,\cy)--(\ex,\ey);

\foreach \k in {1,2,...,8}
{
\draw[->, shorten >=0.125cm] ({(\k/8)*(\ccx/2-\ax/2)+\ax},{\ay-(\k/8)*(\ay/2-\ccy/2)})--({(\k/8)*(\cx/2-\ax/2)+\ax},{\ay-(\k/8)*(\ay/2-\cy/2)});
\draw[->, shorten >=0.125cm] ({(\k/8)*(\cx/2-\ax/2)+\ax},{\ay-(\k/8)*(\ay/2-\cy/2)})--({(\k/8)*(\dx/2-\ax/2)+\ax},{\ay-(\k/8)*(\ay/2-\dy/2)});
\draw[->, shorten >=0.125cm] ({(\k/8)*(\dx/2-\ax/2)+\ax},{\ay-(\k/8)*(\ay/2-\dy/2)})--({(\k/8)*(\dx/2-\bx/2)+\bx},{\by-(\k/8)*(\by/2-\dy/2)});
}

{
\def\k{0};
\draw[->, shorten >=0.125cm] ({(\k/8)*(\dx/2-\ax/2)+\ax},{\ay-(\k/8)*(\ay/2-\dy/2)})--({(\k/8)*(\dx/2-\bx/2)+\bx},{\by-(\k/8)*(\by/2-\dy/2)});
}

\foreach \k in {1,2,...,7}
{
\draw[->, dotted, shorten >=0.125cm] ({(\k/8)*(\ccx/2-\ax/2)+\ax},{\ay-(\k/8)*(\ay/2-\ccy/2)})--({(\k/8)*(\ax/2+\ccx/2-\bx)+\bx},{(\k/8)*(\ay/2+\ccy/2-\by)+\by});
\draw[->, dotted, shorten >=0.125cm] ({(\k/8)*(\ax/2+\ccx/2-\bx)+\bx},{(\k/8)*(\ay/2+\ccy/2-\by)+\by})--({(\k/8)*(\ax/2-\ccx/2)+\ccx},{\ccy-(\k/8)*(\ccy/2-\ay/2)});
\draw[->, dotted, shorten >=0.125cm] ({(\k/8)*(\ax/2-\ccx/2)+\ccx},{\ccy+(\k/8)*(\ay/2-\ccy/2)})--({(\k/8)*(\ax/2-\cx/2)+\cx},{\cy+(\k/8)*(\ay/2-\cy/2)});
\draw[->, dotted, shorten >=0.125cm] ({(\k/8)*(\ax/2-\cx/2)+\cx},{\cy+(\k/8)*(\ay/2-\cy/2)})--({(\k/8)*(\ax/2-\dx/2)+\dx},{\dy+(\k/8)*(\ay/2-\dy/2)});
\draw[->, dotted, shorten >=0.125cm] ({(\k/8)*(\ax/2-\dx/2)+\dx},{\dy+(\k/8)*(\ay/2-\dy/2)})--({(\k/8)*(\bx/2-\dx/2)+\dx},{\dy+(\k/8)*(\by/2-\dy/2)});
}

{
\def\k{0};
\draw[->, dotted, shorten >=0.125cm] ({(\k/8)*(\ccx/2-\ax/2)+\ax},{\ay-(\k/8)*(\ay/2-\ccy/2)})--({(\k/8)*(\ax/2+\ccx/2-\bx)+\bx},{(\k/8)*(\ay/2+\ccy/2-\by)+\by});
\draw[->, dotted, shorten >=0.125cm] ({(\k/8)*(\ax/2+\ccx/2-\bx)+\bx},{(\k/8)*(\ay/2+\ccy/2-\by)+\by})--({(\k/8)*(\ax/2-\ccx/2)+\ccx},{\ccy-(\k/8)*(\ccy/2-\ay/2)});
\draw[->, dotted, shorten >=0.125cm] ({(\k/8)*(\ax/2-\ccx/2)+\ccx},{\ccy+(\k/8)*(\ay/2-\ccy/2)})--({(\k/8)*(\ax/2-\cx/2)+\cx},{\cy+(\k/8)*(\ay/2-\cy/2)});
\draw[->, dotted, shorten >=0.125cm] ({(\k/8)*(\ax/2-\cx/2)+\cx},{\cy+(\k/8)*(\ay/2-\cy/2)})--({(\k/8)*(\ax/2-\dx/2)+\dx},{\dy+(\k/8)*(\ay/2-\dy/2)});
}

{
\def\k{8};
\draw[->, dotted, shorten >=0.125cm] ({(\k/8)*(\ax/2-\ccx/2)+\ccx},{\ccy+(\k/8)*(\ay/2-\ccy/2)})--({(\k/8)*(\ax/2-\cx/2)+\cx},{\cy+(\k/8)*(\ay/2-\cy/2)});
\draw[->, dotted, shorten >=0.125cm] ({(\k/8)*(\ax/2-\cx/2)+\cx},{\cy+(\k/8)*(\ay/2-\cy/2)})--({(\k/8)*(\ax/2-\dx/2)+\dx},{\dy+(\k/8)*(\ay/2-\dy/2)});
\draw[->, dotted, shorten >=0.125cm] ({(\k/8)*(\ax/2-\dx/2)+\dx},{\dy+(\k/8)*(\ay/2-\dy/2)})--({(\k/8)*(\bx/2-\dx/2)+\dx},{\dy+(\k/8)*(\by/2-\dy/2)});
}

\foreach \k in {1,2,...,8}
{
\draw[->,dotted, shorten >=0.125cm] ({(\k/8)*(\ccx/2-\ex/2)+\ex},{\ey-(\k/8)*(\ey/2-\ccy/2)})--({(\k/8)*(\cx/2-\ex/2)+\ex},{\ey-(\k/8)*(\ey/2-\cy/2)});
\draw[<-,dotted, shorten <=0.125cm] ({(\k/8)*(\ccx/2-\ex/2)+\ex},{\ey-(\k/8)*(\ey/2-\ccy/2)})--({(\k/8)*(\bx/2-\ex/2)+\ex},{\ey-(\k/8)*(\ey/2-\by/2)});
\draw[<-,dotted, shorten <=0.125cm] ({(\k/8)*(\bx/2-\ex/2)+\ex},{\ey-(\k/8)*(\ey/2-\by/2)})--({(\k/8)*(\bx/2-\dx/2)+\dx},{\dy-(\k/8)*(\dy/2-\by/2)});
}

{
\def\k{0}
\draw[<-,dotted, shorten <=0.125cm] ({(\k/8)*(\bx/2-\ex/2)+\ex},{\ey-(\k/8)*(\ey/2-\by/2)})--({(\k/8)*(\bx/2-\dx/2)+\dx},{\dy-(\k/8)*(\dy/2-\by/2)});
}

\foreach \k in {1,2,...,7}
{
\draw[<-, shorten <=0.125cm] ({(\k/8)*(\cx/2-\ex/2)+\ex},{\ey-(\k/8)*(\ey/2-\cy/2)})--({(\k/8)*(\ex/2+\cx/2-\dx)+\dx},{(\k/8)*(\ey/2+\cy/2-\dy)+\dy});
\draw[<-, shorten <=0.125cm] ({(\k/8)*(\ex/2+\cx/2-\dx)+\dx},{(\k/8)*(\ey/2+\cy/2-\dy)+\dy})--({(\k/8)*(\ex/2-\cx/2)+\cx},{\cy-(\k/8)*(\cy/2-\ey/2)});
\draw[<-, shorten <=0.125cm] ({(\k/8)*(\ex/2-\cx/2)+\cx},{\cy+(\k/8)*(\ey/2-\cy/2)})--({(\k/8)*(\ex/2-\ccx/2)+\ccx},{\ccy+(\k/8)*(\ey/2-\ccy/2)});
\draw[<-, shorten <=0.125cm] ({(\k/8)*(\ex/2-\ccx/2)+\ccx},{\ccy+(\k/8)*(\ey/2-\ccy/2)})--({(\k/8)*(\ex/2-\bx/2)+\bx},{\by+(\k/8)*(\ey/2-\by/2)});
\draw[<-, shorten <=0.125cm] ({(\k/8)*(\ex/2-\bx/2)+\bx},{\by+(\k/8)*(\ey/2-\by/2)})--({(\k/8)*(\dx/2-\bx/2)+\bx},{\by+(\k/8)*(\dy/2-\by/2)});
}

{
\def\k{0};
\draw[<-, shorten <=0.125cm] ({(\k/8)*(\cx/2-\ex/2)+\ex},{\ey-(\k/8)*(\ey/2-\cy/2)})--({(\k/8)*(\ex/2+\cx/2-\dx)+\dx},{(\k/8)*(\ey/2+\cy/2-\dy)+\dy});
\draw[<-, shorten <=0.125cm] ({(\k/8)*(\ex/2+\cx/2-\dx)+\dx},{(\k/8)*(\ey/2+\cy/2-\dy)+\dy})--({(\k/8)*(\ex/2-\cx/2)+\cx},{\cy-(\k/8)*(\cy/2-\ey/2)});
\draw[<-, shorten <=0.125cm] ({(\k/8)*(\ex/2-\cx/2)+\cx},{\cy+(\k/8)*(\ey/2-\cy/2)})--({(\k/8)*(\ex/2-\ccx/2)+\ccx},{\ccy+(\k/8)*(\ey/2-\ccy/2)});
\draw[<-, shorten <=0.125cm] ({(\k/8)*(\ex/2-\ccx/2)+\ccx},{\ccy+(\k/8)*(\ey/2-\ccy/2)})--({(\k/8)*(\ex/2-\bx/2)+\bx},{\by+(\k/8)*(\ey/2-\by/2)});
}

{
\def\k{8};
\draw[<-, shorten <=0.125cm] ({(\k/8)*(\ex/2-\cx/2)+\cx},{\cy+(\k/8)*(\ey/2-\cy/2)})--({(\k/8)*(\ex/2-\ccx/2)+\ccx},{\ccy+(\k/8)*(\ey/2-\ccy/2)});
\draw[<-, shorten <=0.125cm] ({(\k/8)*(\ex/2-\ccx/2)+\ccx},{\ccy+(\k/8)*(\ey/2-\ccy/2)})--({(\k/8)*(\ex/2-\bx/2)+\bx},{\by+(\k/8)*(\ey/2-\by/2)});
\draw[<-, shorten <=0.125cm] ({(\k/8)*(\ex/2-\bx/2)+\bx},{\by+(\k/8)*(\ey/2-\by/2)})--({(\k/8)*(\dx/2-\bx/2)+\bx},{\by+(\k/8)*(\dy/2-\by/2)});
}

\draw[gray, very thin] (\bx,\by)--(\ax/2+\ccx/2,\ay/2+\ccy/2);

\end{tikzpicture}
\caption{Geometric symmetric chain decomposition for the projected polytope of $L(5)$.}
\end{figure}

We give the equations of the hyperplanes in the table below.

\begin{center}
\begin{tabular}{| c | c | c | c | c| c |}
\hline
Turning Set & \# (Real)& \# (Fake)& Complexity & Directions In & Directions Out\\
\hline
$b-c-3d+3e=0$ &0,2,5&0,2,4,7&1&b,d&c,e \\
$4a-3b-c+d+e=1$ &&1&1&e&d \\
$3a-3b-c+d=0$ &1,4,6,8&3,6,8&1&a,c&b,d \\
$a+e=1$ &3&5&1&a&e \\
$a+b-c-3d+4e=1$ &7&&1&b&a \\
\hline
\end{tabular}
\end{center}

From Theorem 3.5, we deduce that there is a constant $M$ (which can be taken to be $27$ by careful inspection of the proof of Theorem 3.5), such that there is a disjoint collection of symmetric chains in $L(5,Mn)$ which pass through all points of $L(5,n)\subset L(5,Mn)$. Notice that neither the weak nor the strong hyperplane condition is satisfied.

This concludes the proof of Theorem 3.1.
\section{Proofs of the Main Results}
\begin{proof}[Proof of Proposition 3.2]
For each real snake, we construct closed chains starting on all points of $S_0^s$, and for each fake snake, we construct open chains similarly. We make the chains follow each of the swipes, turning at each turning set, until we hit the last ending set $S_k^e$. As the snakes cover the polytope, each point is covered by some chain. Any two chains from different snakes are obviously disjoint, and because of the non self-intersecting condition on the swipes within a snake, we get the chains inside a given snake are disjoint. Finally, the chains are symmetric by linear interpolation of the ranks of the vertices of $\bar{S_0^s}$ and $\bar{S_k^e}$ within a given snake.
\end{proof}

\begin{proof}[Proof of Theorem 3.3]
By scaling up the polytope by the least common multiple of all the complexities, we may assume that all complexities are $1$. Consider $P(n)$, and restrict the symmetric chain decomposition of $P$ to this discrete poset. To show that we get a symmetric chain decomposition of the discrete poset, we claim it suffices to show that any (continuous) symmetric chain which intersects $P(n)$ has its starting point, ending point, and all turning points contained in $P(n)$.

Indeed, requiring each such continuous chain to have endpoints in $P(n)$ means that the restriction of the chains to $P(n)$ have the sum of the maximal and minimal ranks correct to be symmetric. Furthermore, if we have that two consecutive turning points (i.e. points which lie on turning sets) of the continuous chain lie in $P(n)$, then on the segment between them, we skip no ranks of points in $P(n)$, which shows the (discrete) chain is symmetric.

Given a point $p$ in $P(n)$ contained in a swipe $S$, we draw a line segment through $p$ in the direction $e_i$ of the swipe, which has endpoints $x^s$ and $x^e$ on the two turning sets of the swipe. We claim that these endpoints are in fact points of $P(n)$. If $x^s=x^e$, then we are done. Otherwise, choose one of these turning sets, say the one through $x^e$ (the other case proceeds similarly). The strong hyperplane condition implies there exists a hyperplane of complexity $1$ containing the turning set which can be written as $H(x)=\sum a_jx_j=b$ with $b$ integral, $a_j$ integers, and $a_i=1$.

Writing $x^e=p+\lambda e_i$, we get $b=H(x^e)=H(p+\lambda e_i)=H(p)+\lambda$. As $b$ and all $a_j$ are integers, and all coordinates of $p$ lie in $\frac{1}{n}\mathbb{Z}$, this shows $\lambda \in \frac{1}{n}\mathbb{Z}$. Hence from the equation $x^e=p+\lambda e_i$, we get $x^e \in P(n)$. The same argument shows $x^s \in P(n)$, and by the discussion above, the conclusion follows.

\end{proof}

\begin{proof}[Proof of Theorem 3.5]
Consider $P(kM)$, where $M$ is an integer to be specified later, and restrict the symmetric chain decomposition of $P$ to this discrete poset. Throw away any chains which don't pass through a point of $P(k)$. We need to show that all chains that remain are symmetric chains. As in the proof of Theorem 3.3, it suffices to show that any geometric chain which intersects $P(k)$ has its starting point, ending point, and all turning points contained in $P(kM)$.

To show this, it is sufficient to show that for any swipe $S$, there exists an integer constant $M_S$ which depends only on $S$ (and not on $k$), such that if $p$ is a point in $P(k)$ contained in the swipe $S$, and we draw a line segment through $p$ in the direction $e_i$ of the swipe, which has endpoints $x^s$ and $x^e$ on the two turning sets of the swipe, then both $x^s$ and $x^e$ lie inside $P(kM_S)$.

Indeed, we can then just take $M$ to be the product of the $M_S$'s over all swipes $S$.

If $x^s=x^e$, then any integral $M_S$ would work for this particular $x$. Otherwise, choose one of these turning sets, say the one through $x^e$ (the other case is identical). As the affine span of the ending turning set is rational, there exists a rational hyperplane containing the affine span and not containing the direction of the swipe $e_i$ which is given by an equation $H(x)=\sum a_jx_j=b$ with all $a_j$ and $b$ integral, and $a_i\ne 0$.

Writing $x^e=p+\lambda e_i$, we get $b=H(x^e)=H(p+\lambda e_i)=H(p)+\lambda a_i$. As all $b$ and all $a_j$ are integers, and all coordinates of $p$ lie in $\frac{1}{k}\mathbb{Z}$, this shows $\lambda \in \frac{1}{ka_i}\mathbb{Z}$. Hence from the equation $x^e=p+\lambda e_i$, we get $x^e \in P(ka_i)$. Let $a_i'$ be the analogous constant for $x^s$, and take $M_S=a_ia_i'$. From the discussion above, the conclusion follows.
\end{proof}

\begin{proof}[Proof of Theorem 3.6]

For each point $x$ lying inside $P(k)$, we construct a chain $C_x$ as follows. If $x$ does not lie inside a snake, then let $C_x=\{ x \}$; otherwise, let $T$ be the snake that $x$ lies inside. Let $S_i$ be the earliest swipe that $x$ lies inside. Start the chain $C_x$ at the point $x$, and then proceed by extending the chain $C_x$ in the direction of the current swipe until it strictly surpasses the ending turning set. If we are not in the next swipe (or if there is no next swipe), then stop the chain right before it exits the current swipe; otherwise, repeat the process of extending the chain.

We claim that the weak hyperplane condition implies that for any two of these chains, either they are disjoint, or one is contained inside the other. To check this, we can restrict ourselves to a single snake and suppose we have chains $C_x$ and $C_y$ associated to the points $x,y$. Suppose $C_x$ and $C_y$ intersect and one is not contained inside the other. Look at the first time they intersect, say at the point $z$ in the swipe $S_r$. Note that $z$ cannot be the first point of either $C_x$ or $C_y$.

Clearly, $z$ is the first point inside $S_r$ of one of the chains, say $C_x$. As $z$ does not start $C_x$, $S_r$ is not the first swipe in the snake. Let $x'$ be the predecessor of $z$ in $C_x$, and $y'$ the predecessor of $z$ in $C_y$ (which both exist). Let $H(x)=\sum a_kx_k=b$ with $a_i=a_j$, where $e_i,e_j$ are the directions of $S_{r-1}$ and $S_r$ respectively, be the equation of the hyperplane containing $S_{r-1}^e=S_r^s$.

Note that $x'=z-\frac{1}{k}e_i$ belongs to $S_{r-1}$. Either $y'$ is in $S_{r-1}$ or $y'$ is in $S_r\setminus S_r^s$. If $y'$ is in $S_{r-1}$, then $y'=z-\frac{1}{k}e_i=x'$, which can't happen by the minimality of $z$. If $y'$ lies in $S_r\setminus S_{r}^s$, then $y'=x'+\frac{1}{k}e_{i}-\frac{1}{k}e_j$, from which we immediately see that $H(x')=H(y')$, which can't happen as $S_{r-1}$ and $S_r$ are on opposite sides of $H$. The claim now follows, i.e. every two such chains are either disjoint, or one is contained inside the other.

In the collection of all chains $C_x$, consider the maximal chains with respect to inclusion of sets. From the discussion above, all such chains are disjoint and partition $P(k)$.

Fix a snake $T$. First, throw away any chains that don't start in the first swipe $S_0$, and don't end in the last swipe $S_l$. From each remaining chain, throw away as few elements as possible from the ends to make the chain a symmetric chain.

We claim that we have thrown away at most $O(\frac{1}{k})$ of the points in $T$. Once we show this, then as there are only finitely many such snakes, and the number of points not in any snake is $O(\frac{1}{k})$ of the number of points in $P(k)$, the theorem follows. To start, we make the following three observations.

\begin{obs}
Take a full dimensional lattice $L$ with basis $\{f_1,f_2,\ldots f_n\}$. Let $H$ be a hyperplane not containing the direction $f_1$. Then if we project the lattice onto $H$ in the direction $f_1$, we get a full dimensional lattice $L'$ inside $H$. Furthermore, if we refine the lattice $L$ by a constant integer factor $C$, then $L'$ is refined by $C$ as well.
\end{obs}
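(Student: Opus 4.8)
The plan is to make the projection map explicit and reduce everything to linear algebra over $\mathbb{Z}$. First I would write $H_0$ for the linear hyperplane parallel to $H$. Since $H$ does not contain the direction $f_1$, we have $f_1\notin H_0$, hence $\mathbb{R}^n=H_0\oplus\mathbb{R}f_1$, and the projection $\pi$ onto $H$ along $f_1$ is a well-defined affine map whose linear part $\pi_0\colon\mathbb{R}^n\to H_0$ is the corresponding linear projection. Because $\pi_0$ is a group homomorphism and $L=\mathbb{Z}f_1+\cdots+\mathbb{Z}f_n$ is generated by the $f_i$, the image $\pi_0(L)$ is generated as a group by $\pi_0(f_1)=0$ and $\pi_0(f_2),\ldots,\pi_0(f_n)$, so $\pi_0(L)=\mathbb{Z}\pi_0(f_2)+\cdots+\mathbb{Z}\pi_0(f_n)$, and $\pi(L)=\pi(0)+\pi_0(L)$ is a coset of this set inside $H$. (If one prefers $H$ to pass through the origin, then $\pi=\pi_0$ and $\pi(L)$ is literally a subgroup.)

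The key step is to show the vectors $\pi_0(f_2),\ldots,\pi_0(f_n)$ are linearly independent over $\mathbb{R}$. Suppose $\sum_{i=2}^{n}c_i\pi_0(f_i)=0$; then $\sum_{i=2}^{n}c_if_i\in\ker\pi_0=\mathbb{R}f_1$, so $\sum_{i=2}^{n}c_if_i=c_1f_1$ for some scalar $c_1$, and linear independence of $f_1,\ldots,f_n$ forces every $c_i=0$. Since $\dim H_0=n-1$, these $n-1$ vectors form a basis of $H_0$; hence $\pi_0(L)$ is a full-dimensional (rank $n-1$) lattice in $H_0$, and therefore $L':=\pi(L)$ is a full-dimensional lattice in $H$ (a coset of a rank $n-1$ lattice spanning the direction space of $H$), as claimed.

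For the refinement statement I would simply observe that $\pi$, and hence $\pi_0$, does not depend on $L$ at all. Refining $L$ by the integer factor $C$ replaces it by $\tfrac1C L$, with basis $\tfrac1C f_1,\ldots,\tfrac1C f_n$; applying $\pi_0$ and using linearity gives $\pi_0\!\left(\tfrac1C L\right)=\tfrac1C\pi_0(L)$, so the projected lattice is refined by exactly the same factor $C$. This is just commutativity of a linear map with scalar multiplication.

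I do not expect a genuine obstacle here. The only point needing care is the \emph{discreteness} of $\pi_0(L)$, which is precisely the linear independence of the images $\pi_0(f_2),\ldots,\pi_0(f_n)$ established above; once that is in hand, everything else is formal. A minor bookkeeping issue is the affine-versus-linear distinction for $H$, which I would dispatch in one sentence by passing to the direction space $H_0$ as above.
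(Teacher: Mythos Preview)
Your argument is correct. The paper itself does not supply a proof of this observation at all; it is simply asserted inside the proof of Theorem~3.6 and used as a black box. Your write-up therefore goes beyond what the paper provides, and the route you take---passing to the linear projection $\pi_0$ onto the direction space $H_0$, noting $\pi_0(L)=\mathbb{Z}\pi_0(f_2)+\cdots+\mathbb{Z}\pi_0(f_n)$, and then checking linear independence of $\pi_0(f_2),\ldots,\pi_0(f_n)$ using $\ker\pi_0=\mathbb{R}f_1$---is exactly the natural one and is what the authors presumably had in mind when calling this an ``observation.'' Your handling of the refinement claim via $\pi_0(\tfrac1C L)=\tfrac1C\pi_0(L)$ is likewise the obvious one-line argument, and your remark distinguishing the affine hyperplane $H$ from its direction space $H_0$ is a nice bit of hygiene that the paper elides.
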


\begin{obs}
By the weak hyperplane condition, we would have attained the same family of chains $C_x$ by traveling backwards instead, and stopping strictly before we hit the turning sets before changing directions.
\end{obs}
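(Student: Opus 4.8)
The plan is to verify, one turning set at a time, that the forward construction and the backward construction bend a chain at exactly the same lattice point; granting this, following the snake structure forward from a point $x\in P(k)$ and following it backward from $x$ trace out the same broken-line path, so the maximal chain through $x$ is the same for both constructions, and since the maximal chains of each construction partition $P(k)$ the two families coincide.

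The heart of the matter is the local analysis at a single turning set. Fix $\tau=S_{r-1}^e=S_r^s$ lying in a hyperplane $\sum a_\ell x_\ell=b$, where $S_{r-1}$ has direction $e_i$ and $S_r$ has direction $e_j$; by the weak hyperplane condition $a_i=a_j$, and after orienting the equation we may assume $a_i=a_j=a>0$, so that $S_{r-1}$ lies on the side $\sum a_\ell x_\ell\le b$ and $S_r$ on the side $\ge b$. Let $p\in P(k)\cap S_{r-1}$ be the last lattice point on its $e_i$-fibre still in $S_{r-1}$ and put $w=p+\tfrac1k e_i$. Then on the one hand $\sum a_\ell w_\ell\in(b,\,b+\tfrac ak]$, so $w$ is strictly past $\tau$; on the other hand $\sum a_\ell(w-\tfrac1k e_j)_\ell=\sum a_\ell w_\ell-\tfrac{a_j}{k}=\sum a_\ell p_\ell\le b$ — and this is the single place the equality $a_i=a_j$ is used — so $w-\tfrac1k e_j\notin S_r$. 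Hence, whenever $w$ happens to lie in $S_r$, the point $w$ is simultaneously the first lattice point of $S_r$ strictly past $\tau$ along the $e_i$-fibre of $p$ and the last lattice point of $S_r$ before $\tau$ along its own $e_j$-fibre. The forward procedure, arriving at $p$, ``strictly surpasses $\tau$'', lands on $w$, and continues along $e_j$ precisely when $w\in S_r$; the backward procedure, travelling along $S_r$ in direction $-e_j$, ``stops strictly before hitting $\tau$'', i.e.\ at $w$, turns into direction $-e_i$, and moves to $w-\tfrac1k e_i=p\in S_{r-1}$. Thus both procedures produce the identical transition $p\xrightarrow{e_i}w\xrightarrow{e_j}\cdots$; and when $w\notin S_r$ both refuse to cross, since the entire $e_j$-fibre through $w$ has the same image under projection along $e_j$ as $w$ and so misses $S_r$, leaving no backward chain running from $S_r$ through $w$ into $S_{r-1}$ either.

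With the local statement in hand I would then induct along a chain, propagating the agreement of the two procedures from one turning set to the next, and finally run the same argument from the two ends of a chain to conclude that every maximal forward chain is a maximal backward chain and vice versa. The step I expect to be the main obstacle is the bookkeeping at turning sets where a chain terminates rather than continues — one must check that a forward chain ending at $p\in S_{r-1}$ is matched exactly by a backward chain beginning at $p$ — together with disposing of the $O(\tfrac1k)$-proportion of lattice points lying precisely on some turning set (where the inequalities above degenerate) and of chains failing to reach the first or last swipe; all of this is harmless in the asymptotic regime, and it is also exactly the point at which, via the lattice-projection observation above, the equality $a_i=a_j$ expresses that equally many discrete chains enter and leave each turning set.
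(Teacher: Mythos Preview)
The paper records this as an observation with no proof whatsoever; it is used only to justify applying the ``chains that stop too early'' count symmetrically to obtain a ``chains that start too late'' count. So your write-up is not being compared against anything---you have supplied an argument where the paper supplies none.

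Your core local computation is exactly the right idea and is correct. With $a_i=a_j=a$ and $p$ the last lattice point of the $e_i$-fibre with $\sum a_\ell p_\ell\le b$, the point $w=p+\tfrac1k e_i$ has hyperplane value in $(b,b+\tfrac ak]$, and the identity $\sum a_\ell(w-\tfrac1k e_j)_\ell=\sum a_\ell p_\ell$ pins down $w$ as the unique lattice point on its $e_j$-fibre with value in that same interval. This is precisely why the forward rule ``first lattice step strictly past $\tau$ in direction $e_i$'' and the backward rule ``last lattice step strictly before $\tau$ in direction $-e_j$'' single out the same point, and it is the only place $a_i=a_j$ is needed. A cleaner way to phrase it: projecting $\tfrac1k\mathbb{Z}^m$ onto the hyperplane $H$ along $e_i$ and along $e_j$ yield the \emph{same} sublattice of $H$ (this is Observation~5.1 together with $a_i=a_j$), so the forward and backward discretizations of the continuous chains agree fibre by fibre.

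One sentence to tighten: your justification of the termination case (``the entire $e_j$-fibre through $w$ \ldots\ misses $S_r$'') is not literally what is needed. The $e_j$-fibre through $w$ can meet $S_r$ even when $w\notin S_r$; what you actually want is that no backward chain in $S_r$ can turn into the $e_i$-fibre of $p$. Your own computation already gives this: any backward turning point on the $e_i$-fibre of $p$ must have hyperplane value in $(b,b+\tfrac ak]$ and be a lattice point, hence must equal $w$, and $w\notin S_r$ rules it out. Replace the fibre-projection sentence with this uniqueness remark and the argument is clean. The residual bookkeeping you flag (lattice points lying exactly on a turning set, chains that terminate mid-snake) is genuinely negligible in the asymptotic regime, exactly as you say.
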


\begin{obs}
If we have two hyperplanes $H_1$ and $H_2$ whose non-empty intersection is $H'$ and a direction vector $e$ not parallel to $H_1$, then if $x_2$ is a point on $H_2$, $x_1$ is the projection in direction $e$ of $x_2$ on $H_1$, then $d(x_1,H')\le C\cdot d(x_1,x_2)$ for some constant $C$ independent of $x_2$.
\end{obs}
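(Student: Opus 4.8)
The plan is to reduce the statement to an elementary planar fact via orthogonal projection onto a suitable two-dimensional subspace. First I would dispose of the degenerate cases: if $H_1=H_2$ then $H'=H_1$, so $d(x_1,H')=0$ and any constant works; and since $H_1\cap H_2=H'\neq\varnothing$, the only remaining case is that $H_1,H_2$ are distinct hyperplanes and $H'$ is an affine subspace of codimension $2$. I would also note up front that the hypothesis ``$e$ not parallel to $H_1$'' is used only to guarantee that $x_1$ — the point where the line through $x_2$ in direction $e$ meets $H_1$ — exists and lies on $H_1$; once that is granted, $e$ plays no further role, and I would in fact prove the sharper statement that $d(x_1,H')\le C\,d(x_1,x_2)$ for \emph{every} $x_1\in H_1$ and \emph{every} $x_2\in H_2$, with $C$ depending only on $H_1$ and $H_2$.

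The main step is the following reduction. Pick a point of $H'$ and translate so that it becomes the origin; then $H_1,H_2$ are linear hyperplanes and $H'=H_1\cap H_2$ is a linear subspace of dimension $m-2$. Let $\pi$ denote orthogonal projection onto $V:=(H')^{\perp}$, which is $2$-dimensional. Then $d(x_1,H')=|\pi(x_1)|$, the images $\ell_i:=\pi(H_i)$ are two \emph{distinct} lines through the origin in $V$ (distinct because $H_1\neq H_2$ while both contain $H'$, so $\pi^{-1}(\ell_1)=H_1\neq H_2=\pi^{-1}(\ell_2)$), making some angle $\theta\in(0,\tfrac{\pi}{2}]$, and $\pi(x_1)\in\ell_1$ while $\pi(x_2)\in\ell_2$. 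Since orthogonal projection is $1$-Lipschitz,
\[
|x_1-x_2|\;\ge\;|\pi(x_1)-\pi(x_2)|\;\ge\;d\!\left(\pi(x_1),\ell_2\right)\;=\;|\pi(x_1)|\sin\theta\;=\;d(x_1,H')\sin\theta ,
\]
so $C:=1/\sin\theta$ works, and $\theta$ depends only on $H_1$ and $H_2$. (Equivalently, with unit normals $n_1,n_2$ to $H_1,H_2$ one has the explicit identity $d(x_1,H')=d(x_1,H_2)/|\,n_2-\langle n_1,n_2\rangle n_1\,|$ for $x_1\in H_1$, and then $d(x_1,H_2)\le d(x_1,x_2)$ since $x_2\in H_2$; here $\sin\theta=|\,n_2-\langle n_1,n_2\rangle n_1\,|$.)

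I do not anticipate a genuine obstacle: the content is routine linear algebra. The two points that warrant a line of care are the claim that the distance from $x_1$ to $H'$ measured \emph{within} $H_1$ agrees with the ambient distance (true because the in-$H_1$ direction of steepest ascent of $\langle n_2,\cdot\rangle$ is $n_2-\langle n_1,n_2\rangle n_1$, which is orthogonal to $H'$ in the ambient space as well), and the bookkeeping showing that the constant $C$ is independent of $x_2$ (it is, being determined entirely by the angle between the two fixed hyperplanes $H_1$ and $H_2$).
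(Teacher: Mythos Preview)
Your argument is correct. In the paper this statement is recorded as an observation with no proof supplied, so there is no ``paper's own proof'' to compare against; your reduction to the two-dimensional plane $(H')^\perp$ via orthogonal projection is a clean way to fill the gap, and it even yields the explicit constant $C=1/\sin\theta$ where $\theta$ is the angle between $H_1$ and $H_2$. Your remark that $e$ is only used to ensure $x_1$ exists, and that the inequality in fact holds for all $x_1\in H_1$, $x_2\in H_2$, is a nice sharpening.
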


We want to estimate how many chains $C_x$ stop in some swipe $S_i$, say of direction $e_j$, strictly before the last swipe (similarly, we want to estimate the number of chains which start strictly after the first swipe). Denote by $\alpha_x \in S_i$ the final element of a chain $C_x$ which stops in $S_i$.

Consider the line segment from $\alpha_x$ to $\alpha_x+\frac{1}{k}e_j$. Let $\alpha_1$ be the point where the line segment enters $S_{i+1}$, and $\alpha_2$ the point when it leaves $S_{i+1}$ (these exist as $\alpha_x$ is the last point of $C_x$, and as $S_i$ and $S_{i+1}$ are of full dimension). Then $\alpha_1$ lies on $S_{i+1}^s$, and $\alpha_2$ lies on some facet $\overline{W}$ of $\overline{S_{i+1}}$. For $k$ sufficiently large, we must have the facet $\overline{W}$ intersect the facet $\overline{S_{i+1}^s}$, as if $\overline{W}\cap\overline{S_{i+1}^s}=\varnothing$, then there is a positive distance between these facets. Let the hyperplane containing $S_{i+1}^s$ be denoted $H_1$, and the hyperplane containing $\overline{W}$ be denoted $H_2$. We also note $\overline{W}\cap\overline{S_{i+1}^s}$ is a part of the boundary of $\overline{S_{i+1}^s}$; let $H'$ be the affine span of $\overline{W}\cap\overline{S_{i+1}^s}$. Finally, take $R$ to be the orthogonal projection of $\overline{S_{i+1}^s}$ onto $H'$.

Then by Observation 5.3, taking $x_i=\alpha_i$ and $e=-e_j$ (not parallel to $H_1$), we have $d(\alpha_1,R)=d(\alpha_1,H') \le C\cdot d(\alpha_1,\alpha_2) \le \frac{C}{k}$ for some constant $C$ independent of $k$ and $x$.

By Observation 5.1, if we project $\frac{1}{k}\mathbb{Z}^m$ in the direction $e_j$ onto $H_1$, then we obtain a full-dimensional lattice $\frac{1}{k}\Lambda$ on $H_1$, which $\alpha_1$ belongs to.

\begin{lem}
If we have a full-dimensional lattice $\Lambda$ inside $\mathbb{R}^d$, and a bounded region $R$ of an affine space $H'$ of codimension $\ge 1$, then the number of points of $\frac{1}{k}\Lambda$ within distance $\frac{C}{k}$ of $R$ is at most $O(k^{d-1})$.
\end{lem}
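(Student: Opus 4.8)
The plan is a standard volume--packing count. Since $\Lambda$ is a full-dimensional lattice, I would fix a linear isomorphism $T\colon\mathbb{R}^d\to\mathbb{R}^d$ with $T\Lambda=\mathbb{Z}^d$; it multiplies all distances and all volumes by bounded factors, carries $H'$ to an affine subspace $H''$ of the same dimension $m=\dim H'\le d-1$, and carries $R$ to a bounded subset $R''\subseteq H''$. So it suffices to treat the case $\Lambda=\mathbb{Z}^d$, at the cost of replacing $C$ and $R$ by $T$-distorted versions, which only affects the implied constant. Note that the codimension hypothesis is exactly $d-m\ge 1$.

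First I would estimate the volume of a tubular neighbourhood. Enclose $R''$ in a ball $B$ of radius $\rho$ lying inside $H''$. For $x\in\mathbb{R}^d$, Pythagoras applied to the orthogonal projection $\pi$ onto the affine span $H''$ gives $d(x,B)^2=|x-\pi(x)|^2+d(\pi(x),B)^2$, so for any $r>0$ the neighbourhood $N_r(R'')=\{x:d(x,R'')\le r\}$ is contained in $\pi^{-1}(B')\cap\{x:|x-\pi(x)|\le r\}$, where $B'$ is the $r$-enlargement of $B$ inside $H''$, a radius $\rho+r$ ball of dimension $m$; this set fibres over $B'$ with $(d-m)$-dimensional balls of radius $r$. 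Hence, writing $\omega_j$ for the volume of the unit $j$-ball, $\mathrm{vol}(N_r(R''))\le \omega_m(\rho+r)^m\,\omega_{d-m}\,r^{d-m}$, and since $d-m\ge 1$ this gives $\mathrm{vol}(N_r(R''))\le\kappa\, r$ for all $r\le 1$, where $\kappa=\omega_m(\rho+1)^m\omega_{d-m}$ depends only on $d$, $m$, $\rho$.

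Now the packing step: for $p\in\tfrac1k\mathbb{Z}^d$ the half-open cube $Q_p=p+[0,\tfrac1k)^d$ has volume $k^{-d}$ and diameter $\sqrt{d}/k$, and the cubes $Q_p$ are pairwise disjoint and tile $\mathbb{R}^d$. If $d(p,R'')\le C/k$ then every point of $Q_p$ is within $(C+\sqrt{d})/k$ of $R''$, so $Q_p\subseteq N_{(C+\sqrt{d})/k}(R'')$; assuming $k\ge C+\sqrt{d}$ so that $(C+\sqrt{d})/k\le 1$, disjointness together with the volume bound yields
\[
\#\bigl\{p\in\tfrac1k\mathbb{Z}^d:d(p,R'')\le C/k\bigr\}\cdot k^{-d}\ \le\ \mathrm{vol}\bigl(N_{(C+\sqrt{d})/k}(R'')\bigr)\ \le\ \kappa\,\frac{C+\sqrt{d}}{k},
\]
so the count is at most $\kappa(C+\sqrt{d})\,k^{d-1}=O(k^{d-1})$. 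The finitely many $k<C+\sqrt{d}$ each contribute a finite amount, since $R''$ is bounded and $\tfrac1k\mathbb{Z}^d$ is discrete, and this is absorbed into the implied constant; undoing $T$ only rescales constants.

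There is no real obstacle here: the entire content is the observation that codimension $\ge 1$ forces the tube volume to decay like $1/k$ rather than remain bounded. The only thing to be careful about is that every constant ($\rho$, $\kappa$, the distortion factors of $T$, and the threshold $C+\sqrt{d}$) is independent of $k$, so that the bound is genuinely uniform in $k$ --- which is precisely what is needed when the lemma is applied, swipe by swipe, in the proof of Theorem 3.6.
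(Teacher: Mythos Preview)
Your proof is correct, but the paper argues differently. Instead of a volume--packing estimate, the paper reduces to codimension $1$, picks a basis vector $e$ of $\Lambda$ not parallel to $H'$, and projects $\frac{1}{k}\Lambda$ onto $H'$ along $e$; by Observation~5.1 this yields a full-dimensional lattice $\frac{1}{k}\Lambda'$ in $H'$. It then enlarges $R$ to a bounded region $R'\subseteq H'$ catching all projections of points within distance $C$ of $R$, so that $|R'\cap\frac{1}{k}\Lambda'|=O(k^{d-1})$, and finally checks that each fibre (a line in direction $e$) meets the $\frac{C}{k}$-neighbourhood of $R$ in a segment of length $O(1/k)$, hence contains only boundedly many $\frac{1}{k}\Lambda$-points. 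Your tubular-volume approach is cleaner and entirely self-contained: it avoids the reduction to codimension $1$, does not rely on Observation~5.1, and makes the role of codimension $\ge 1$ transparent (it is exactly what forces the tube volume to scale like $1/k$). The paper's approach, by contrast, stays in the discrete idiom of the surrounding argument and reuses the projection machinery already set up for Theorem~3.6.
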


\begin{proof}
We can assume $H'$ is of codimension 1. Pick a basis vector $e$ of $\Lambda$ not parallel to the hyperplane $H'$. Project the lattice $\frac{1}{k}\Lambda$ onto the hyperplane $H'$ in the direction $e$ to obtain by Observation 5.1 a lattice $\frac{1}{k}\Lambda'$ on $H'$. Now, there exists a bounded region $R'$ of $H'$ containing $R$ such that a point $x$ within distance $C$ of $R$ gets projected via $e$ inside $R'$. This $R'$ trivially satisfies that if $x \in \frac{1}{k}\Lambda$, and $d(x,R) \le \frac{C}{k}$, then the projection of $x$ via $e$ lies inside $R'\cap \frac{1}{k}\Lambda'$.

There exists a constant $D>0$ such that for any point $x' \in H'$ and any point $x$ lying on the line $L_{x'}$ through $x'$ with direction vector $e$, we have $d(x,x')=D \cdot d(x,H')$. Take a point $x' \in R' \cap \frac{1}{k} \Lambda$. Then if we take a point $x\in L_{x'}$ with $d(x,R) \le \frac{C}{k}$, then $d(x,x') =D \cdot d(x,H') \le D\cdot d(x,R) \le D\frac{C}{k}$. Hence, the number of points in $\frac{1}{k}\Lambda$ within distance $\frac{C}{k}$ of $R$ is bounded above by $(1+2DC/|e|)\cdot |\frac{1}{k}\Lambda' \cap R'|=O(k^{d-1})$.
\end{proof}

To continue the proof of Theorem 3.6, by Lemma 5.4, applied with ambient space $H_1$, the number of points of $\frac{1}{k}\Lambda$ inside $S_{i+1}^s$ of distance at most $\frac{C}{k}$ from $R$ is $O(k^{m-2})$. These points correspond to chains that we potentially throw away. We obtain such a bound for all pairs of a swipe $S_i$ and a facet $W$ contained in the swipe. Combining these bounds, we see that the number of chains that we throw away from the snake that don't make it to the last swipe is at most $O(k^{m-2})$. Since each chain has at most $O(k)$ points, this implies we throw away at most $O(k^{m-1})$ points.

Now, using Observation 5.2, we can apply a similar argument to show the number of chains which don't start in $S_0$ is at most $O(k^{m-2})$. We remark that for this case, one must use the weak hyperplane condition to show that when a chain (viewed as traveling backwards) turns, it leaves its current swipe. Again we throw away at most an additional $O(k^{m-1})$ points. All remaining chains start in the first swipe and end in the last swipe.

We claim there are only $O(k^{m-1})$ chains which we have not thrown away. Indeed, project $\frac{1}{k}\mathbb{Z}^m$ onto (the affine space containing) $S_0^s$ in the direction of $S_0$ to get a full dimensional lattice $\frac{1}{k}\Lambda_0$ by Observation 5.1. As there are $O(k^{m-1})$ points of this lattice contained in $S_0^s$ itself, and each point corresponds to at most one chain, the claim follows.

Take one of the remaining chains $C_x$, and treat it as a continuous curve parametrized by speed $\frac{1}{k}$. Take the continuous chain $G_x$ through $x$ from the snake containing $x$, and again parametrize it by speed $\frac{1}{k}$. The two chains start at distance at most $\frac{1}{k}$ from each other. Because the two curves are not traveling in the same direction only for a bounded amount of time, they deviate from each other at most $\frac{D}{k}$ for some universal constant $D$ (not depending on $k$ or the chain). Hence we only have to throw away $D+1$ points from one of the ends of the chain to make it symmetric. This shows we throw away $O(k^{m-1})$ points from the remaining set.

Thus the total number of points in the snake we need to throw away so that the remaining chains form a symmetric chain decomposition is at most $O(k^{m-1})$. The conclusion of Theorem 3.6 follows.

\end{proof}

\section{Tools and Techniques}
The following tools should help in the pursuit of polytope decompositions. We investigate afterwards products of polytopes with geometric and asymptotic geometric symmetric chain decompositions. For the remainder of this section, we let $P$ be a polytope of dimension $d$ in $\mathbb{R}^m$ with either
\begin{itemize}
\item a geometric symmetric chain decomposition with the strong hyperplane condition, or
\item an asymptotic geometric symmetric chain decomposition with the weak hyperplane condition (so $d=m$).
\end{itemize}

\begin{defn}
Given a $(d-1)$-dimensional subset $S$ of $\mathbb{R}^m$ and a coordinate direction $e_i$ such that the projection of $S$ onto the hyperplane $x_i=0$ is injective, define the \textit{normalized volume in direction $e_i$} $N_i(S)$ to be the ($(d-1)$-dimensional) Lebesgue Measure of the projection of $S$ onto the hyperplane $x_i=0$. For the starting turning set $T$ of a snake, we denote $T_\lambda$ to be the part of the starting set of rank at most $\lambda$, and $N(T_\lambda)$ to be the \textit{normalized volume} in the direction of the first swipe.
\end{defn}

Note that if a subset $S$ lies inside a hyperplane $\sum a_ix_i=b$ with $a_i=a_j$, then the normalized volume is the same in directions $e_i$ and $e_j$. Also, we remark that shearing a subset in the $e_i$ direction doesn't change its normalized volume. Hence in a given $d$-dimensional snake, by the corresponding hyperplane condition, the normalized volume of the turning sets in the direction of the swipes they are adjacent to are the same. The following observation motivates Definition 6.1.

\begin{obs}
As $n \to \infty$, the number of symmetric chains in $P(n)$ which start below rank $\lambda$ in a given d-dimensional snake is asymptotically $N(T_\lambda)n^{d-1}$.
\end{obs}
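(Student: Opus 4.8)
The plan is to count the discrete symmetric chains by counting their starting points, which all lie in the starting turning set $T$. The key point is that a symmetric chain in $P(n)$ that starts below rank $\lambda$ corresponds, after passing to the geometric picture, to a geometric chain in the snake whose starting point lies in $T_\lambda$; and by the strong (resp.\ weak) hyperplane condition together with Theorem~3.3 (resp.\ the argument in Theorem~3.6), the discrete chains are precisely the restrictions of geometric chains whose turning points — in particular the starting point — lie in $P(n)$. So the count we want is essentially $|T_\lambda \cap P(n)|$, up to lower-order corrections.

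First I would observe that $T$ is (a partial simplex, hence up to a set of lower dimension) a $(d-1)$-dimensional polytope lying in some hyperplane, and $T_\lambda$ is the part of it cut off by the half-space $\sum x_i \le \lambda$; since $\lambda$ is not the rank of a vertex (or one treats that case by continuity) $T_\lambda$ is again a $(d-1)$-dimensional polytope. Next I would invoke Observation~5.1: projecting $\frac{1}{n}\mathbb{Z}^m$ in the direction $e_{i_0}$ of the first swipe onto the hyperplane $x_{i_0}=0$ gives a full-dimensional lattice $\frac{1}{n}\Lambda_0$ in that hyperplane, and this projection is injective on $T$ by the definition of a swipe (the points of $S^s$ biject via translation by multiples of $e_{i_0}$ to $S^e$, so no two points of $T$ differ by a multiple of $e_{i_0}$). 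Hence the number of starting points of discrete chains starting below rank $\lambda$ equals $|\,\pi(T_\lambda)\cap \frac{1}{n}\Lambda_0\,|$, where $\pi$ is this coordinate projection.

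Then I would apply the standard lattice-point asymptotics: for a fixed full-dimensional lattice $\Lambda_0$ in $\mathbb{R}^{d-1}$ and a fixed $(d-1)$-dimensional polytope $Q = \pi(T_\lambda)$, one has $|\,Q\cap \frac{1}{n}\Lambda_0\,| = \frac{\operatorname{vol}_{d-1}(Q)}{\operatorname{covol}(\Lambda_0)}\, n^{d-1} + O(n^{d-2})$, the error term coming from boundary effects (this is exactly the kind of estimate used in Lemma~5.4). By rescaling so that $\Lambda_0 = \mathbb{Z}^{d-1}$ — equivalently, absorbing the covolume into the definition — the leading coefficient is exactly the Lebesgue measure of $\pi(T_\lambda)$, which is by Definition~6.1 the normalized volume $N(T_\lambda)$ in the direction of the first swipe. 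Thus the number of such chains is $N(T_\lambda)\,n^{d-1} + O(n^{d-2})$, which is asymptotically $N(T_\lambda)n^{d-1}$ as claimed.

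The main obstacle — really the only subtlety — is making precise the correspondence between \emph{discrete} symmetric chains in $P(n)$ and starting points in $T_\lambda\cap P(n)$, handling the boundary/halted points of fake snakes and the chains that (in the asymptotic setting) do not quite start in the first swipe. In the strong-hyperplane case Theorem~3.3 already gives a clean bijection between discrete chains and geometric chains through lattice points, so the starting point lies in $T\cap\frac1n\mathbb Z^m$ and one is done up to the harmless issue of $\lambda$ hitting a vertex rank; in the weak-hyperplane (asymptotic) case one appeals to the analysis in the proof of Theorem~3.6, where all but $O(n^{d-2})$ of the discrete chains are shown to start genuinely in $S_0$ at a lattice point of the projected lattice, so the discrepancy is absorbed into the error term. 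Everything else is the routine lattice-point count above.
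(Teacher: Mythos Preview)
The paper does not actually prove this observation; it is stated without argument as the motivation for Definition~6.1 and then immediately used to read off Theorem~6.3. Your write-up supplies exactly the details the paper leaves implicit --- counting chains by their starting points in $T_\lambda$, projecting in the swipe direction onto $x_{i_0}=0$ (using Observation~5.1), and applying the standard lattice-point asymptotic --- and is correct. One small simplification: the projection of $\frac{1}{n}\mathbb{Z}^m$ onto $x_{i_0}=0$ is literally $\frac{1}{n}\mathbb{Z}^{m-1}$, so the covolume is $1$ and no rescaling is needed; in the strong-hyperplane case the condition $a_{i_0}=\pm 1$ with integral $b$ is exactly what makes a lattice point of $\pi(T_\lambda)$ lift to a lattice point of $T_\lambda$, so the bijection is exact and not merely asymptotic there.
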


From the observation $T_\lambda=T$ when $\lambda$ is the middle rank, Theorem 6.3 immediately follows.

\begin{thm}
The normalized volume of the middle rank slice of $P$ (in any coordinate direction) is equal to $\sum_T N(T)$ over all starting sets $T$ of all snakes.
\end{thm}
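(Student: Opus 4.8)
The plan is to show that each symmetric snake $S=S_0\cup\dots\cup S_k$ of the decomposition contributes exactly $N(S_0^s)$ to the normalized volume of the middle‑rank slice of $P$, and then sum over the partition of $P$ into snakes. Write $\lambda_0=\tfrac12\operatorname{rank}(P)$ and let $M=P\cap\{\sum x_i=\lambda_0\}$ be the middle‑rank slice. First I would record two general facts about $M$: since $M$ lies in the hyperplane $\sum x_i=\lambda_0$, all of whose coefficients are equal, the remark after Definition 6.1 shows that the normalized volume of $M$, and of any subset of $M$, is the same in every coordinate direction; and since the kernel of each coordinate projection $\pi_i$ is spanned by $e_i$, which does not lie in the direction space $\{\sum x_i=0\}$ of this hyperplane, $\pi_i$ is injective on $M$, so normalized volume (in a fixed direction) is additive over essentially disjoint subsets of $M$. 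The snakes partition $P$, so the sets $M\cap S$ partition $M$ up to a set of dimension $<d-1$; hence it suffices to prove $N(M\cap S)=N(S_0^s)$ for each snake.

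Second, I would analyse a single symmetric snake. Along any chain of the snake rank is non‑decreasing (it increases on each swipe segment of positive length and is continuous at turning sets), and by symmetry and linearity the ranks of the two endpoints of the chain sum to $\operatorname{rank}(P)$, so the chain meets the level $\{\operatorname{rank}=\lambda_0\}$ in essentially a single point, lying in exactly one swipe. This partitions the starting set $S_0^s$ into pieces $A_0,\dots,A_k$ according to which swipe $S_j$ contains the middle‑rank point; let $B_j\subseteq S_j^s$ be the image of $A_j$ under the canonical bijection $S_0^s\to S_j^s$, which is the composition $\sigma_{j-1}\circ\dots\circ\sigma_0$ of the shears defining the swipes $S_0,\dots,S_{j-1}$. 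Inside the swipe $S_j$, of direction $e_{d_j}$, the fibre over a point $s\in S_j^s$ has rank ranging over an interval starting at $\operatorname{rank}(s)$, so $M\cap S_j$ is the graph of a function over $B_j$ in the direction $e_{d_j}$; hence $N_{d_j}(M\cap S_j)=N_{d_j}(B_j)$.

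Third — the crux — I would transport normalized volume along the snake to show $N_{d_j}(B_j)=N(A_j)$. Each shear $\sigma_i$ is a translation by multiples of the coordinate vector $e_{d_i}$, so it preserves normalized volume in the direction $e_{d_i}$; and on each intermediate turning set $S_{i}^{e}=S_{i+1}^{s}$ the hyperplane condition, in the form packaged in the remark preceding Theorem 6.3, forces the coefficients of $x_{d_i}$ and $x_{d_{i+1}}$ in the containing hyperplane to agree, so normalized volume in direction $e_{d_i}$ equals that in direction $e_{d_{i+1}}$ on every subset of that turning set. Chaining these two facts telescopically, $N(A_j)=N_{d_0}(A_j)=N_{d_1}(\sigma_0 A_j)=\dots=N_{d_j}(B_j)$. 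Summing over $j$ and using additivity of $N_{d_0}$ over the partition $S_0^s=\bigsqcup_j A_j$ and additivity of normalized volume over the partition $M\cap S=\bigsqcup_j(M\cap S_j)$ (both valid since the relevant projections are injective on the relevant affine space), we get $N(M\cap S)=\sum_j N_{d_j}(B_j)=\sum_j N(A_j)=N(S_0^s)$. Summing over all snakes then gives $N(M)=\sum_T N(T)$.

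The main obstacle I anticipate is the bookkeeping in the telescoping step: one must verify that the canonical bijection between turning sets really is the composition of the swipe shears, that each swipe direction is a genuine coordinate direction (so that shear‑invariance of normalized volume applies in that direction), and that the hyperplane condition is invoked in exactly the "incoming coefficient $=$ outgoing coefficient" form. A secondary point to handle cleanly is the lower‑dimensional overlap sets — chains whose middle‑rank point falls on a turning set, and the halted points that fake snakes omit — which have dimension $<d-1$ and so do not affect normalized volume. Alternatively, one can sidestep the geometry entirely and deduce the statement from Observation 6.2: since $T_{\lambda_0}=T$, the number of symmetric chains of $P(n)$ starting below rank $\lambda_0$ in a snake $T$ is asymptotically $N(T)\,n^{d-1}$; summing over snakes and comparing with the total number of middle‑rank points of $P(n)$, which is asymptotically $N(M)\,n^{d-1}$, gives the equality after dividing by $n^{d-1}$ and letting $n\to\infty$.
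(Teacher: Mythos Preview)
Your proposal is correct, and the brief alternative you give in your last paragraph is exactly the paper's proof: the paper derives Theorem~6.3 in one line from Observation~6.2 by noting that $T_{\lambda_0}=T$ at the middle rank, so the asymptotic count of chains in each snake is $N(T)\,n^{d-1}$, and comparing with the asymptotic count of middle-rank lattice points.

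Your primary argument takes a genuinely different route: rather than passing through the discrete posets $P(n)$ and a limiting argument, you work entirely at the level of the continuous polytope, slicing each snake at the middle rank and transporting normalized volume along the swipes via the shear-invariance and hyperplane-condition remarks preceding Observation~6.2. This is more self-contained --- it does not rely on Observation~6.2 (which the paper leaves as an observation without proof) or on the machinery relating $P$ to $P(n)$ --- and it makes explicit why the hyperplane condition is exactly what is needed for the volumes to telescope. The paper's route is shorter once Observation~6.2 is granted, and has the advantage of making the combinatorial meaning of $N(T)$ (as an asymptotic chain count) the driving idea; your route has the advantage of being a purely measure-theoretic statement with a purely measure-theoretic proof. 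Both are valid, and your handling of the lower-dimensional exceptional sets (turning-set intersections, halted points in fake snakes, the codimension-$\ge 1$ complement in the asymptotic case) is adequate since normalized volume ignores them.
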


Taking the statement ``the symmetric chains (asymptotically) partition the entire poset'' in the limit, we get the volume analogue in Theorem 6.4. The key fact used is that the integral of a linear function over a region is given by the volume of the region times the value at the barycenter. In this case, the linear function is the ``length of the chain''.

\begin{thm}
The volume of a snake with starting set $T$ is equal to $N(T)$ times the length of the chain through the barycenter of the starting set. The volume of $P$ is equal to the sum of this quantity over all snakes.
\end{thm}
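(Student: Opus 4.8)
The plan is to prove the formula for one snake first, and then deduce the statement for $P$ by additivity of volume. In a geometric symmetric chain decomposition the snakes partition $P$, overlapping only along turning sets, which are lower-dimensional and hence null; in an asymptotic decomposition the snakes are pairwise disjoint and miss only a set of codimension $\ge 1$. Either way $\mathrm{vol}(P)=\sum_{\text{snakes}}\mathrm{vol}(\text{snake})$, so it suffices to treat a snake built from compatible swipes $S_0,\dots,S_k$, where $S_i$ has coordinate direction $e_{j_i}$ and the starting set is $T=S_0^s$, and show its volume is $N(T)$ times the length of the chain threading the barycenter of $T$. Since the swipes meet only along turning sets, $\mathrm{vol}(\text{snake})=\sum_{i=0}^k \mathrm{vol}(S_i)$, so I would compute each $\mathrm{vol}(S_i)$ separately; the partial-simplex deletions in the turning sets are null and can be ignored throughout.

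Fix $i$ and project orthogonally onto the hyperplane $x_{j_i}=0$. By hypothesis this projection is injective on $S_i^s$, with image a region $R_i$ of $(d-1)$-dimensional measure $N_{j_i}(S_i^s)$. The hyperplane condition forces the $e_{j_i}$-coefficient of the hyperplane of each turning set of $S_i$ to be nonzero, so the two endpoints of the fiber of $\overline{S_i}$ over a point $q\in R_i$ depend affinely on $q$, whence the fiber length $\ell_i(q)$ is an affine function of $q$. By Fubini, $\mathrm{vol}(S_i)=\int_{R_i}\ell_i(q)\,dq$, and because $\ell_i$ is affine this integral equals $N_{j_i}(S_i^s)$ times the value of $\ell_i$ at the barycenter of $R_i$ (the ``integral of an affine function equals volume times value at the barycenter''). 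Orthogonal projection being affine, the barycenter of $R_i$ is the image of the barycenter of $S_i^s$; thus $\mathrm{vol}(S_i)=N_{j_i}(S_i^s)$ times the length of the fiber of $S_i$ over the barycenter of $S_i^s$.

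Two observations then finish the argument. First, all $N_{j_i}(S_i^s)$ equal $N(T)$: passing from $S_i^s=S_{i-1}^e$ back to $S_{i-1}^s$ is a shear in the direction $e_{j_{i-1}}$, which preserves the normalized volume in that direction, so $N_{j_{i-1}}(S_i^s)=N_{j_{i-1}}(S_{i-1}^s)$; and the weak (hence also strong) hyperplane condition at the turning set $S_i^s$, whose hyperplane is $\sum a_\ell x_\ell=b$ with $a_{j_{i-1}}=a_{j_i}$, gives $N_{j_{i-1}}(S_i^s)=N_{j_i}(S_i^s)$. Chaining these equalities down to $i=0$ yields $N_{j_i}(S_i^s)=N_{j_0}(S_0^s)=N(T)$. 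Second, the shear bijection $S_i^s\to S_i^e$ is affine, so it carries the barycenter of $S_i^s$ to the barycenter of $S_i^e=S_{i+1}^s$; hence the chain through the barycenter of $T$ passes through the barycenter of every $S_i^s$, and the fiber of $S_i$ over that barycenter is precisely the $i$-th segment of this chain. Summing over $i$ gives $\mathrm{vol}(\text{snake})=N(T)\cdot(\text{length of the barycenter chain})$, and summing over snakes gives the formula for $P$.

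I expect the main obstacle to be the bookkeeping in the first observation: propagating the normalized-volume identity down the snake requires alternately invoking shear-invariance and the hyperplane condition, and one must be scrupulous about which coordinate direction each normalized volume refers to, since the hyperplane condition only equates the two normalized volumes in the directions of the two swipes adjacent to a given turning set (and one should first check these are all well-defined, which is exactly what the nonvanishing of the relevant hyperplane coefficient guarantees). A minor additional point is to note that snakes which fail to be $d$-dimensional contribute $0$ to both sides, so the identity holds trivially for them.
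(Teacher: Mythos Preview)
Your proof is correct and follows the same core idea the paper gestures at: the volume is an integral of the chain-length function over the starting set, and since that function is affine, the integral equals normalized volume times the value at the barycenter. The paper only sketches this in one sentence (and omits the proof entirely for the refined versions, Theorems~6.5 and~6.6), whereas you carry it out rigorously swipe-by-swipe via Fubini; the two ingredients you isolate---that all $N_{j_i}(S_i^s)$ agree with $N(T)$, and that the shear maps transport barycenters---are exactly what the paper has already recorded in the paragraph preceding Observation~6.2, so your argument is a faithful expansion of the intended proof rather than a new route. One tiny phrasing slip: you write ``the weak (hence also strong) hyperplane condition'', but the implication runs the other way (strong $\Rightarrow$ weak); the substance of your argument is unaffected since the paper already asserts that under either hypothesis the normalized volumes of all turning sets coincide.
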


Theorem 6.4 is especially easy to use in practice if one notices that the linearity of the ``length of chains'' function on the starting set of a snake, along with the fact that the barycenter of a simplex is the average of its vertices, implies that we can replace the length of the chain through the barycenter with the average of the lengths of the chains starting at each vertex of the starting set. More precise versions of Theorems 6.3 and 6.4 are given in Theorems 6.5 and 6.6, whose proofs we again omit.

\begin{thm}
The normalized volume of the rank $\lambda$ slice of $P$ (in any coordinate direction), with $\lambda$ at most the middle rank, is equal to $\sum_TN(T_\lambda)$ over all starting sets $T$ of all snakes.
\end{thm}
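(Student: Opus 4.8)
The plan is to prove the identity by computing $\frac{d}{d\lambda}\operatorname{vol}\bigl(P\cap\{\rho\le\lambda\}\bigr)$ in two different ways, where $\rho(x)=\sum_j x_j$ denotes the rank function; one computation produces the left-hand side and the other the right-hand side, after which continuity of both sides in $\lambda$ finishes the argument. Write $\rho_P$ for the rank of $P$, so the middle rank is $\rho_P/2$, and note that $\lambda\mapsto\operatorname{vol}(P\cap\{\rho\le\lambda\})$ is piecewise polynomial, hence differentiable off a finite set, and similarly for each snake in place of $P$.

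First I would compute the left-hand side. Fix a coordinate direction $e_i$ for which $\pi_i\colon\mathbb R^m\to\{x_i=0\}$ is injective on $P$ (for instance a direction of some swipe); the identity for every other admissible coordinate direction is then automatic, since $\{\rho=\lambda\}$ has all its coefficients equal and so its slices have the same normalized volume in every coordinate direction (the remark after Definition 6.1). Slicing $P\cap\{\rho\le\lambda\}$ along the fibres of $\pi_i$ and applying Fubini, for a fixed $x'=(x_j)_{j\ne i}$ the admissible values of $x_i$ form the interval cut from the $e_i$-fibre of $P$ by the constraint $x_i\le\lambda-\sum_{j\ne i}x_j$; differentiating the length of this interval in $\lambda$ produces the indicator that $x'$ lies in $\pi_i\bigl(P\cap\{\rho=\lambda\}\bigr)$, so $\frac{d}{d\lambda}\operatorname{vol}(P\cap\{\rho\le\lambda\})=N_i\bigl(P\cap\{\rho=\lambda\}\bigr)$ at each point of differentiability.

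Next I would compute the right-hand side. Since the snakes are pairwise disjoint and their complement in $P$ has codimension $\ge1$ --- hence measure zero, and in the strong-hyperplane case empty --- we have $\operatorname{vol}(P\cap\{\rho\le\lambda\})=\sum_{\text{snakes }Q}\operatorname{vol}(Q\cap\{\rho\le\lambda\})$ exactly (a fake snake differs from the corresponding union of swipes only in a lower-dimensional set, so it has the same volume). Fix a snake $Q$ with starting turning set $T=S_0^s$ and first swipe direction $e_{i_0}$. Each $v_0\in T$ determines a chain of $Q$; write $\ell_\lambda(v_0)$ for the length of the portion of that chain of rank at most $\lambda$. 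Running the Fubini computation above one swipe at a time and pasting the swipes together gives $\operatorname{vol}(Q\cap\{\rho\le\lambda\})=\int_T\ell_\lambda(v_0)\,dN(v_0)$, where $N$ is the normalized volume on $T$ in the direction $e_{i_0}$. The crux is that this pasting into a single integral over $T$ is legitimate: translation along a coordinate axis preserves normalized volume in that axis, and the (weak, resp.\ strong) hyperplane condition guarantees that at each intermediate turning set the normalized volumes in the directions of the two adjacent swipes coincide --- not merely in total mass but as measures --- so the canonical identifications $\bar S_0^s\to\bar S_1^s\to\cdots\to\bar S_k^e$ are measure-preserving for the relevant normalized volumes.

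Finally, since every chain of $Q$ is symmetric --- running from rank $\rho(v_0)$ to rank $\rho_P-\rho(v_0)$ and skipping no ranks --- and since $\lambda\le\rho_P/2$, the chain through $v_0$ attains rank $\lambda$ exactly when $\rho(v_0)\le\lambda$, the inequality $\lambda\le\rho_P-\rho(v_0)$ being automatic once $\rho(v_0)\le\lambda\le\rho_P/2$. Hence $\frac{d}{d\lambda}\ell_\lambda(v_0)=\mathbf 1[\rho(v_0)\le\lambda]$, so $\frac{d}{d\lambda}\operatorname{vol}(Q\cap\{\rho\le\lambda\})=\int_T\mathbf 1[\rho(v_0)\le\lambda]\,dN(v_0)=N(T_\lambda)$. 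Summing over all snakes, equating with the left-hand computation at every point of differentiability, and extending to all $\lambda\le\rho_P/2$ by continuity of both sides, yields $N_i\bigl(P\cap\{\rho=\lambda\}\bigr)=\sum_T N(T_\lambda)$. I expect the parametrization step of the third paragraph to be the main obstacle: one must verify carefully that the hyperplane condition is precisely what makes the successive turning-set identifications normalized-volume-preserving, so that the volume of a snake genuinely splits as $\int_T\ell_\lambda\,dN$. (Alternatively one can argue discretely, in the spirit of Observation 6.2 and Theorem 6.3 --- count the rank-$\lambda$ points of $P(n)$, observe that each symmetric chain of $P(n)$ meets rank $\lambda$ in at most one point and does so iff it starts at rank $\le\lambda$, then divide by $n^{d-1}$ and let $n\to\infty$ --- but in the asymptotic case that route additionally requires discarding the finitely many exceptional $\lambda$ for which the uncovered set meets $\{\rho=\lambda\}$ in codimension $0$, a nuisance the volume argument sidesteps.)
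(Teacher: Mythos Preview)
The paper explicitly omits the proof of this theorem (``More precise versions of Theorems 6.3 and 6.4 are given in Theorems 6.5 and 6.6, whose proofs we again omit''), so there is no written argument to compare against. That said, the surrounding text makes clear what the authors have in mind: Theorem 6.3 is deduced in one line from Observation 6.2, which counts discrete chains starting below rank $\lambda$ and passes to the limit $n\to\infty$. The intended proof of Theorem 6.5 is evidently the same discrete counting argument with $\lambda$ arbitrary rather than equal to the middle rank --- precisely the alternative you sketch in your parenthetical remark at the end.

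Your primary argument takes a genuinely different route: rather than counting lattice points and taking a limit, you work directly with Lebesgue measure, computing $\frac{d}{d\lambda}\operatorname{vol}(P\cap\{\rho\le\lambda\})$ in two ways via Fubini. This is sound and arguably cleaner, since it avoids the lattice-point approximation entirely and handles the asymptotic case without needing to discard the exceptional ranks you mention. The key step --- that the swipe-to-swipe identifications preserve normalized volume under the hyperplane condition --- is exactly the content of the paragraph following Definition 6.1, so your concern about that step is already addressed in the paper.

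One small slip: you write ``fix a coordinate direction $e_i$ for which $\pi_i$ is injective on $P$ (for instance a direction of some swipe)'', but if $e_i$ is a swipe direction then $\pi_i$ is certainly \emph{not} injective on $P$. What you actually use is that the $e_i$-fibres of $P$ are intervals (so Fubini applies) and that $\pi_i$ is injective on the rank-$\lambda$ slice (automatic, since $\{\rho=\lambda\}$ is transverse to every $e_i$); you should rephrase accordingly. With that correction the argument is fine.
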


\begin{thm}
The volume of a snake with starting set $T$ below rank $\lambda$ (with $\lambda$ less than middle rank) is equal to $N(T_\lambda)$ times the difference of $\lambda$ and the rank of the barycenter of $T_\lambda$. The volume of $P$ below rank $\lambda$ is equal to the sum of this quantity over all snakes.
\end{thm}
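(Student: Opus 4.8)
The plan is to reduce to a single snake and compute its volume below rank $\lambda$ by slicing it into its constituent chains --- the same method that underlies Theorems 6.3 and 6.4 --- while keeping careful track of where each chain crosses the hyperplane $\{\sum_i x_i = \lambda\}$. Since the snakes cover $P$ up to a set of measure zero and intersection with the half-space $\{\sum_i x_i \le \lambda\}$ distributes over unions, the statement for $P$ is immediate once we have it for one snake. So fix a snake with starting set $T = S_0^s$ and compatible swipes $S_0,\dots,S_k$ of directions $e_{d_0},\dots,e_{d_k}$. Because rank is increasing along every chain and the snake is symmetric, each $x \in T$ satisfies $\mathrm{rank}(x) \le \tfrac{1}{2}\mathrm{rank}(P)$, and the chain $\gamma_x$ through $x$ runs from rank $\mathrm{rank}(x)$ up to rank $\mathrm{rank}(P)-\mathrm{rank}(x) \ge \tfrac{1}{2}\mathrm{rank}(P) > \lambda$; hence $\gamma_x$ meets $\{\mathrm{rank} \le \lambda\}$ exactly when $\mathrm{rank}(x) \le \lambda$, i.e. when $x \in T_\lambda$, and in that case the part of $\gamma_x$ below rank $\lambda$ is its initial arc, of length $\lambda - \mathrm{rank}(x)$.

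Next I set up the bookkeeping. Parametrize the chains of the snake by the (injective) projection $B_0$ of $T$ onto $\{x_{d_0}=0\}$, a region of Lebesgue measure $N(T)$, writing $x(y) \in T$ for the start of the chain over $y \in B_0$. Following a chain through the swipes yields, for each $p$, an affine bijection $\phi_p \colon B_0 \to \pi_{d_p}(S_p^s)$ (each shear $S_q^s \to S_q^e$ translates a point by an affine scalar times $e_{d_q}$, hence is affine; compose these with projections), together with affine functions $R_p(y)$ = the rank at which $\gamma_{x(y)}$ enters the $p$-th swipe and $G_p(y) \ge 0$ = the length of $\gamma_{x(y)}$ inside it, with $R_{p+1} = R_p + G_p$, $R_0(y) = \mathrm{rank}(x(y))$ and $R_{k+1}(y) = \mathrm{rank}(P) - \mathrm{rank}(x(y))$. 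The one substantive input beyond bookkeeping is that each $\phi_p$ is \emph{volume-preserving}: by the hyperplane condition the normalized volume $N_{d_p}(S_p^s)$ is the same for every $p$ (shearing preserves normalized volume, and at an interior turning set the two adjacent swipe directions give equal normalized volumes, as observed after Definition 6.1), so the image $\pi_{d_p}(S_p^s)$ of the affine bijection $\phi_p$ has the same measure $N(T)$ as $B_0$, which forces $|\det \phi_p| = 1$.

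Now compute. Slicing the $p$-th swipe into its segments --- which point along $+e_{d_p}$, a direction in which rank strictly increases --- Fubini gives $\mathrm{vol}(S_p \cap \{\mathrm{rank} \le \lambda\}) = \int_{\pi_{d_p}(S_p^s)} (\text{truncated fibre length})\,dz$, and pulling back along the volume-preserving map $\phi_p$ this becomes $\int_{B_0} \bigl(\min(\lambda, R_{p+1}(y)) - \min(\lambda, R_p(y))\bigr)\,dy$. Summing over $p$ the sum telescopes to $\int_{B_0} \bigl(\min(\lambda, R_{k+1}(y)) - \min(\lambda, R_0(y))\bigr)\,dy$, and since $R_{k+1}(y) > \lambda$ for all $y$ the first term is the constant $\lambda$, so the integrand equals $\lambda - \min(\lambda, \mathrm{rank}(x(y)))$, which vanishes when $\mathrm{rank}(x(y)) > \lambda$ and equals the affine function $\lambda - \mathrm{rank}(x(y))$ on $B_0^\lambda := \pi_{d_0}(T_\lambda)$. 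Integrating this affine function over $B_0^\lambda$ gives $\mathrm{Leb}(B_0^\lambda)$ times its value at the centroid of $B_0^\lambda$, that is $N(T_\lambda)$ times $\lambda - \mathrm{rank}(\mathrm{bary}(T_\lambda))$ (an affine projection carries barycenters to barycenters). Summing over all snakes yields the formula for $P$.

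The main obstacle --- and essentially the only place the hyperplane condition is used --- is the volume-preservation of the reparametrizations $\phi_p$, equivalently the constancy of the normalized volume along the snake; without it the per-swipe contributions would carry differing Jacobians and the telescoping would break. Everything else is routine Fubini plus the elementary fact that integrating an affine function amounts to evaluating at the barycenter. One should also note that passing between a real and a fake snake discards only halted points, which lie on lower-dimensional turning sets, hence form a set of measure zero that does not affect any of the volumes above.
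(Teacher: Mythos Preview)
Your proof is correct. The paper explicitly omits the proof of this theorem (``More precise versions of Theorems 6.3 and 6.4 are given in Theorems 6.5 and 6.6, whose proofs we again omit''), indicating only, in the context of Theorem~6.4, that the key fact is that the integral of a linear function over a region equals the volume times the value at the barycenter, together with the observation following Definition~6.1 that the hyperplane condition forces all turning sets in a snake to have the same normalized volume. Your argument is precisely a fleshed-out version of that sketch: you slice the snake into chains via Fubini, use the volume-preservation of the reparametrizations $\phi_p$ (which is exactly the equal-normalized-volume observation) to telescope the swipe contributions, and then apply the affine-function-barycenter fact on $T_\lambda$. There is no genuine difference in approach; you have simply supplied the details the paper leaves to the reader.
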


To end the section we prove the following natural result about products of polytopes with asymptotic geometric symmetric chain decompositions (remarking afterwards on the analogous result for geometric symmetric chain decompositions).

\begin{thm}
The product of two polytopes with asymptotic geometric symmetric chain decompositions with the weak hyperplane condition has an asymptotic geometric symmetric chain decomposition with the weak hyperplane condition.
\end{thm}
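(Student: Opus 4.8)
The plan is to carry out the classical chain-by-chain product construction --- decompose each product of a chain of $P$ with a chain of $Q$ by the standard symmetric chain decomposition of a rectangle, and assemble the results --- and to exploit the fact that in the asymptotic regime the usual sources of trouble (the open/closed distinction between fake and real snakes, the behaviour of halted points, exact matching along turning sets) all occur in codimension $\ge 1$ and may be ignored. Since the hypotheses force $\dim P=m$ and $\dim Q=m'$, we have $\dim(P\times Q)=m+m'$, the ambient dimension, so an asymptotic geometric symmetric chain decomposition of $P\times Q$ is what we should produce. Replacing each fake snake by its closure changes each given decomposition only on a set of codimension $\ge 1$, so we may assume both decompositions consist of real snakes, say $\{A_\alpha\}$ covering $P$ and $\{B_\beta\}$ covering $Q$ outside codimension $\ge 1$. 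The products $A_\alpha\times B_\beta$ are pairwise disjoint, and their union is all of $P\times Q$ outside the codimension-$\ge 1$ set $\bigl(P\setminus\bigcup A_\alpha\bigr)\times Q\,\cup\,P\times\bigl(Q\setminus\bigcup B_\beta\bigr)$. So it suffices to decompose a single product $S\times S'$ of real snakes into symmetric snakes satisfying the weak hyperplane condition that cover $S\times S'$ outside codimension $\ge 1$.

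Parametrise $S$ by its starting set $T=S_0^s$: each $p\in T$ gives a chain $c_p$ in $S$ of length $\ell(p)$, and because shears compose to affine maps, both $\ell(p)$ and $c_p(u)$ are affine --- the latter jointly in $(p,u)$ as long as $u$ is restricted to a single swipe of $S$. Parametrise $S'$ similarly, with lengths $\ell'(p')$. To a triple $(p,p',t)$ with $0\le t\le\min(\ell(p),\ell'(p'))$ assign the $t$-th L-shaped chain of the standard symmetric chain decomposition of the rectangle $c_p\times c'_{p'}$: when $\ell(p)\le\ell'(p')$ it starts at $(c_p(t),p')$, winds along $S'$ raising the $Q$-progress from $0$ to $\ell'(p')-t$, then winds along $S$ raising the $P$-progress from $t$ to $\ell(p)$ (and the mirrored construction when $\ell(p)\ge\ell'(p')$). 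As $(p,p',t)$ range over this $(m+m'-1)$-dimensional parameter space these chains are pairwise disjoint and cover $S\times S'$ outside codimension $\ge 1$. Now subdivide the parameter space by the affine conditions ``sign of $\ell-\ell'$'', ``which swipe of $S$ contains $t$'', and ``which swipe of $S'$ contains $\ell'(p')-t$ (resp.\ $\ell(p)-t$)'', discard the codimension-$\ge 1$ locus $\ell=\ell'$, and triangulate each resulting polytope. Over a simplex $\Delta$ of this triangulation the assigned chains form a genuine snake: the start-point map and all turning-point maps are affine and injective on $\Delta$ (injectivity because, the snakes being full-dimensional, a swipe direction never lies in the hyperplane spanned by its turning set), so they carry $\Delta$ to partial simplices, and the intervening swipes are convex since the shear amounts are affine.

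It remains to check the two conditions. For symmetry, the $t$-th L-chain in $c_p\times c'_{p'}$ has endpoint ranks summing to $\bigl(2\,\mathrm{rank}(p)+\ell(p)\bigr)+\bigl(2\,\mathrm{rank}(p')+\ell'(p')\bigr)=\mathrm{rank}(P)+\mathrm{rank}(Q)=\mathrm{rank}(P\times Q)$, using that $S$ and $S'$ are symmetric; note this is independent of $t,p,p'$, so every assigned chain is symmetric. For the weak hyperplane condition, take a turning set of a $\Delta$-snake and let $e,e'$ be the two adjacent swipe directions. If both come from $S'$, the turning set lies inside $\mathbb{R}^m\times(\text{the hyperplane spanned by the relevant turning set of }S')$, whose equation already has equal coefficients on $e$ and $e'$ by the weak hyperplane condition for $S'$; the case of two directions from $S$ is symmetric. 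In the remaining case --- the unique ``corner'' turning set, where the $Q$-winding gives way to $P$-winding --- the turning set is genuinely $(m+m'-1)$-dimensional (injectivity again), so its affine span is a hyperplane $H$; moreover the difference of the two adjacent swipe directions is exactly the velocity of the corner point as $t$ alone varies (the $S$-part moves by $+e'$, the $S'$-part by $-e$), hence lies in the direction space of $H$, which is precisely the assertion that $H$'s equation has equal coefficients on $e$ and $e'$. (Turning sets with equal directions on both sides, introduced by the triangulation, satisfy the condition trivially.) The nondegenerate $\Delta$-snakes are $(m+m')$-dimensional and cover $S\times S'$ outside codimension $\ge 1$, which together with the first paragraph produces the required decomposition of $P\times Q$.

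The main obstacle is the bookkeeping in the second paragraph: organising the subdivision and triangulation of the parameter space and verifying that each piece yields a snake in the strict sense of the definitions, the fussiest point being the partial-simplex requirement on turning sets (a priori one is gluing products of simplices, which are not simplices). The symmetry and weak-hyperplane verifications, by contrast, are short, and the reason the statement is true at all --- whereas its non-asymptotic counterpart is not, cf.\ Example 6.10 and the restricted scope of Theorem 6.8 --- is exactly that in the asymptotic regime the open/closed distinction and all codimension-one matching issues disappear into the discarded codimension-$\ge 1$ set.
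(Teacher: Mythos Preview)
Your proposal is correct and takes essentially the same approach as the paper: reduce to a product of two full-dimensional snakes, decompose each chain-by-chain product rectangle via the standard L-shaped symmetric chain decomposition, subdivide by combinatorial type (which swipes the corner lands in) and triangulate to obtain new snakes, then verify symmetry and the weak hyperplane condition --- using the original hyperplanes for the within-$S$ and within-$S'$ turns, and the ``slope $-1$'' phenomenon at the corner. Your $(p,p',t)$ parametrisation and the velocity-in-$t$ argument for the corner are a mild repackaging of the paper's $(\mathbf{x},\mathbf{y}')$ parametrisation and its observation that $l(\mathbf{x},\mathbf{y})$ has slope $-1$ in the $2$-dimensional slice $R_{rs}$, but the content is the same.
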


\begin{proof}
Clearly we can reduce the problem to decomposing the product of two (full-dimensional) fake snakes. Consequently, all chains regarded in this proof are open symmetric chains. The strategy is as follows. Regard each snake as a disjoint family of chains, disregarding any further structure. The product of the two snakes is then a disjoint union of open rectangles, formed by taking the product of a chain in the first snake with a chain in the second snake. We then further decompose each rectangle into a family of open symmetric chains as in Figure 11. Finally, we group together chains which move similarly into snakes, and check the weak hyperplane condition.

Given a chain $C_\textbf{x}$ with starting point $\textbf{x}=\textbf{x}_s$ and ending point $\textbf{x}_e$ in the first snake and a chain $C_\textbf{y}$ with starting point $\textbf{y}=\textbf{y}_s$ and ending point $\textbf{y}_e$ in the second snake, we consider the product $C_\textbf{x} \times C_\textbf{y}$. This can be embedded into the plane as an open rectangle $R(\textbf{x},\textbf{y})$ by mapping the point $(\textbf{x'},\textbf{y'}) \in C_\textbf{x} \times C_\textbf{y}$ to the point $(rk(\textbf{x'}),rk(\textbf{y'}))$. We decompose $C_\textbf{x} \times C_\textbf{y}$ into open symmetric chains by decomposing $R(\textbf{x},\textbf{y})$ as in Figure 11, making all the chains first go right, and then up. In the rectangle $R(\textbf{x},\textbf{y})$, consider the angle bisector $l(\textbf{x},\textbf{y}) \subseteq R(\textbf{x},\textbf{y})$ of the bottom right vertex $(rk(\textbf{x}_e), rk(\textbf{y}_s))$ of $R$. This corresponds in $C_\textbf{x} \times C_\textbf{y}$ to the set of points of the same rank as $(\textbf{x}_e, \textbf{y}_s)$. In $R(\textbf{x},\textbf{y})$, we consider the vertical line $v_i$ whose $x$-coordinate is the rank of the point where $C_\textbf{x}$ intersects the $i$'th turning set of the first snake. Similarly, we consider the horizontal line $h_j$ whose $y$-coordinate is the rank of the point where $C_\textbf{y}$ intersects the $j$'th turning set of the second snake. Define $R_{ij}(\textbf{x},\textbf{y})$ to be the region of $R(\textbf{x},\textbf{y})$ bounded by the lines $v_i,v_{i+1},h_j,h_{j+1}$. Notice that in $C_\textbf{x} \times C_\textbf{y}$, the preimage of $R_{ij}(\textbf{x},\textbf{y})$ is a (possibly degenerate) 2-dimensional planar rectangle identical to $R_{ij}(\textbf{x},\textbf{y})$ in the directions of the corresponding swipes.

\begin{figure}[H]
\centering
\begin{tikzpicture}
\draw[gray, very thin] (0,0)--(6,0)--(6,4)--(0,4)--cycle;
\draw[very thin] (1,0)-- node[below] {Chain 1} (5,0);
\draw[very thick] (2,4)--(6,0);
\draw[very thick] (0,0)-- node[left] {Chain 2} (0,4);
\draw[very thick] (2,4)--(6,4);
\foreach \x in {0,1,2,3,4,5,6,7,8}
{
\draw [->, shorten >=0.125cm] (0,\x/2)--(6-\x/2,\x/2);
}
\foreach \x in {1,2,3,4,5,6,7,8}
{
\draw [->, shorten >=0.125cm] (2+\x/2,4-\x/2)--(2+\x/2,4);
}
\foreach \x in {0,6,10,12,26,34}
{
\draw [dashed, very thin] (\x/7,0)--(\x/7,4);
}
\foreach \x in {0,3,6,12,20,24}
{
\draw [dashed, very thin] (0,\x/7)--(6,\x/7);
}
\draw   (0,0) node[left] {$h_0$};
\draw   (0,3/7) node[left] {$h_1,h_2,h_3$};
\draw   (0,6/7) node[left] {$h_4$};

\draw   (0,0) node[below] {$v_0$};
\draw   (6/7,0) node[below] {$v_1$};
\draw   (10/7,0) node[below] {$v_2$};
\end{tikzpicture}
\caption{Product of two chains, decomposed in the same way as in Figure 3.}
\end{figure}
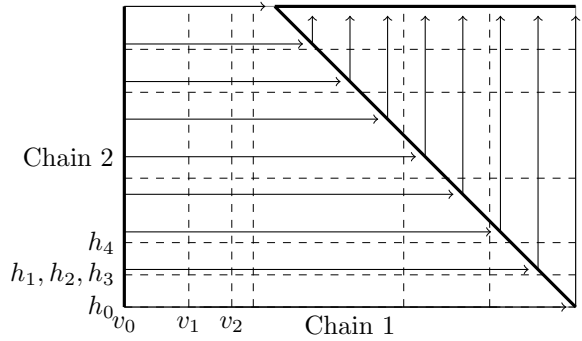

The open symmetric chains in $C_\textbf{x} \times C_\textbf{y}$, when viewed in $R(\textbf{x},\textbf{y})$, make a right angled turn precisely at the intersections with the lines $v_i,h_j$, and $l(\textbf{x},\textbf{y})$. Note that a starting point of a chain in the product space is of the form $(\textbf{x},\textbf{y'})$, where $\textbf{x}$ is a starting point of a chain $C_\textbf{x}$ in the first snake, and $\textbf{y'}$ is a point on a chain $C_\textbf{y}$ in the second snake. This in particular implies that the set of all starting points lies on a hyperplane. Consider the set of starting points $(\textbf{x},\textbf{y'})$ with $\textbf{y'}$ lying in the $i$'th swipe of the second snake, i.e. in $R(\textbf{x},\textbf{y})$, $(\textbf{x},\textbf{y'})$ lies between $h_i$ and $h_{i+1}$. Then as a function of the starting point $(\textbf{x},\textbf{y'})$, the ranks of the start and end points of $C_\textbf{x}$ and $C_\textbf{y}$ are (affine)-linear, as is the rank of $(\textbf{x},\textbf{y'})$ (the coordinates of $v_i$ and $h_j$ are also (affine)-linear, but we don't need to use this). Indeed, $\textbf{x}$ and $\textbf{y'}$ depend linearly on $(\textbf{x},\textbf{y'})$, and $\textbf{y}$ depends (affine)-linearly on $\textbf{y'}$ when $\textbf{y'}$ is restricted to the $i$'th swipe of the second snake. In particular, the distance a chain travels until it hits $l(\textbf{x},\textbf{y})$ is (affine)-linear in the starting position, as this equals $rk(\textbf{x}_e)+rk(\textbf{y}_s)-rk(\textbf{x})-rk(\textbf{y'})$. Group together the chains which cross the same $v_i$'s and $h_j$'s (the chains which are flush with a segment of $v_i$ or $h_j$ form a codimension 1 set). Notice that this in particular forces all of the chains to lie for some fixed $s$ between $h_s$ and $h_{s+1}$. By construction, for some $r,s$, the chains will intersect in order the lines $v_0,v_1,\ldots, v_r, l(\textbf{x},\textbf{y}), h_{s+1}, h_{s+2}, \ldots, h_k$. The set of starting points of each such group of chains is cut out by linear inequalities, so is a polytope. These polytopes cover the starting set, so there exists a finer partition of the starting set into partial simplices.

Consider the set of chains starting at one such partial simplex. We claim this is a snake satisfying the weak hyperplane condition. Indeed, we will show that the collection of turning points of these chains at $v_i$, at $l(\textbf{x},\textbf{y})$, and at $h_j$, are contained in hyperplanes satisfying the weak hyperplane condition.

First, for a fixed $i$, we note that the $v_i$ are contained in the hyperplane $V_{i}$ containing the product of the $i$'th turning set of the first snake with the ambient space of the second snake. This hyperplane clearly satisfies the weak hyperplane condition since the $i$'th turning set of the first snake satisfied the condition. Similar considerations apply for $h_j$. All that remains now is to consider the set of turning points on $l(\textbf{x},\textbf{y})$.

For the set of chains we're considering, $v_r$ is the last vertical segment the chains hit before hitting $l$. By what we described above, the starting simplex is sheared from one $V_i$ to the next in the directions of the corresponding swipes until it hits $V_{r}$, mapping the starting simplex linearly onto $V_r$. If we can show that the distance the chains travel in the direction of the corresponding swipe from $v_r$ to $l$ is a linear function of the starting position in the starting simplex, this will show that the turning points on $l$ are contained in a hyperplane. But this distance is the difference of the distance the chain travels to $l$ (which we've shown to be linear), and the distance the chain travels to $v_{r}$ (which is linear as the swipes transport the starting simplex on $V_0$ linearly to $V_{r}$). Hence, we get the set of turning points on $l(\textbf{x},\textbf{y})$ is contained in a hyperplane. This hyperplane satisfies the weak hyperplane condition because the 2-dimensional slices in the going in and going out directions are given by $R_{rs}(\textbf{x},\textbf{y})$, where $h_{s+1}$ is the next place the chains turn, and the turning set is $l$, which has slope $-1$ so contains the required direction vector.
\end{proof}

The above proof can be applied exactly as written to show the following theorem.

\begin{thm}
Suppose we have two polytopes $P,Q$ with geometric symmetric chain decompositions with the strong hyperplane condition. Suppose further that all snakes in the decompositions of $P$ and $Q$ are real, or all are fake. Then the product $P \times Q$ has a geometric symmetric chain decomposition with the strong hyperplane condition.
\end{thm}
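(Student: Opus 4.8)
The plan is to run the construction in the proof of Theorem~6.7 essentially verbatim, and then to verify the \emph{strong} hyperplane condition wherever that proof verified the weak one; the only genuinely new input is an elementary lemma describing how an integer coordinate‑shear acts on the hyperplanes witnessing the strong hyperplane condition. Exactly as in Theorem~6.7, the problem reduces to decomposing the product $R\times R'$ of a single snake $R$ of $P$ with a single snake $R'$ of $Q$. The hypothesis that each factor's decomposition consists entirely of real snakes or entirely of fake snakes is used precisely here: it forces $R$ and $R'$ to be of the same type, so that $R\times R'$ is a disjoint union of products (closed chain)$\times$(closed chain) or (open chain)$\times$(open chain), and never of a closed chain with an open chain --- the case which, by Example~6.10, can produce a polytope with no symmetric chain decomposition whatsoever (note that the boundary of the square in Example~6.10 is necessarily decomposed using one real and one fake snake, so that example is no obstruction to the present theorem). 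Granting the reduction, one embeds each product of chains $C_{\mathbf x}\times C_{\mathbf y}$ as a planar rectangle $R(\mathbf x,\mathbf y)$, decomposes it ``first right, then up'' with turns along the verticals $v_i$, the horizontals $h_j$, and the angle bisector $l(\mathbf x,\mathbf y)$, and groups the resulting chains by crossing pattern into snakes whose starting sets are partial simplices --- all exactly as in Theorem~6.7. That the resulting family covers $P\times Q$ without self‑intersection and with the correct symmetry is the verbatim content of that proof, so it remains only to check the strong hyperplane condition for the three kinds of turning set that occur.

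The key observation is a shear lemma: if $H=\{\sum_j a_j x_j=b\}$ has $a_j\in\mathbb{Z}$, $b\in\mathbb{Q}$, $a_{i_0}=\pm1$, and we shear a subset $S\subseteq H$ in the direction $e_{i_0}$ by a shift $\ell$ which, written as a function of the free coordinates $x_m$, $m\ne i_0$, on $H$, has the form $\ell=\ell_0+\sum_{m\ne i_0}\ell_m x_m$ with all $\ell_m\in\mathbb{Z}$ and $\ell_0\in\mathbb{Q}$, then the sheared set again lies in a hyperplane with integer coefficients, rational right‑hand side, and $x_{i_0}$‑coefficient $\pm1$ (for $a_{i_0}=1$ the equation is $x_{i_0}+\sum_{m\ne i_0}(a_m-\ell_m)x_m=b+\ell_0$). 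Moreover the shears and unshears that build the snakes of $P$ and $Q$ have exactly this form: by the strong hyperplane condition applied to the ending turning set of a swipe of direction $e_{i_0}$, the shift $\mu$ carrying its starting turning set to its ending turning set equals $\pm(b-\sum_j a_j x_j)$ for the witnessing hyperplane, hence is integer‑linear plus a rational constant in the position on the starting turning set; composing these (the relevant transition matrices are unipotent, so integrally invertible) the same holds for the full shearing and unshearing maps of each snake. It follows at once that a $v_i$‑turning set of a product snake lies in $V_i:=(\text{witnessing hyperplane of the $i$-th turning set of }R)\times\mathbb{R}^{m_Q}$, written by the very same integer/rational equation; since the two swipe directions adjacent to it are both coordinate directions of the first factor, each with its own witnessing hyperplane in $P$, lifting both yields the strong hyperplane condition there, with complexity inherited from $R$. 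The $h_j$‑turning sets are treated symmetrically.

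There remain the turning sets on an angle bisector $l=l(\mathbf x,\mathbf y)$, where the incoming swipe direction is a first‑factor coordinate $e_p$ (a swipe direction of $R$) and the outgoing one is a second‑factor coordinate $e_q$ (a swipe direction of $R'$); this is the crux. Let $v_r$ be the last vertical crossed before $l$. In $R(\mathbf x,\mathbf y)$ the line $l$ is $u+w=\mathrm{rk}(\mathbf x_e)+\mathrm{rk}(\mathbf y_s)$ and the chains move along this stretch with $w=\mathrm{rk}(\mathbf y')$ fixed, so from $v_r$ they travel in direction $e_p$ by the amount $\mathrm{rk}(\mathbf x_e)+\mathrm{rk}(\mathbf y_s)-\mathrm{rk}(\mathbf y')-\mathrm{rk}(\mathbf x'_r)$, where $\mathbf x'_r$ is the point of $C_{\mathbf x}$ on the $r$-th turning set of $R$. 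As a function of the point $(\mathbf x'_r,\mathbf y')$ of the lifted hyperplane $V_r$, each term is integer‑linear plus a rational constant: $\mathrm{rk}(\mathbf y')$ and $\mathrm{rk}(\mathbf x'_r)$ are sums of coordinates, while $\mathrm{rk}(\mathbf x_e)=\mathrm{rk}(P)-\mathrm{rk}(\mathbf x_s)$ and $\mathrm{rk}(\mathbf y_s)$ are obtained by applying the integer‑linear‑plus‑rational unshearing maps of $R$ and $R'$ and then summing coordinates. Hence the $l$‑turning set is the image of the turning set of $V_r$ under an $e_p$‑shear of the form in the lemma, and so lies in a hyperplane with integer coefficients, rational right‑hand side, and $x_p$‑coefficient $\pm1$. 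Viewing the same turning set instead as the starting turning set of the going‑up swipe of direction $e_q$ and shearing the lifted hyperplane $W_{s+1}$ of the next horizontal $h_{s+1}$ backward in direction $e_q$ by an analogous integer‑linear‑plus‑rational amount, it also lies in a (possibly different) hyperplane with integer coefficients, rational right‑hand side, and $x_q$‑coefficient $\pm1$. These two facts are exactly what the strong hyperplane condition requires at the $l$‑turning set, whereas the weak‑condition verification in Theorem~6.7 extracted only that $l$ has slope $-1$, i.e.\ that $e_p-e_q$ lies in the hyperplane.

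The step I expect to demand the most care is this last one: one must be confident that the distance‑to‑$l$ function really is integer‑linear plus a rational constant, which rests on the fact that the shears and unshears of the factor snakes are \emph{integral} --- a genuine consequence of the strong (not merely the weak) hyperplane condition together with the unipotence of shear matrices --- and that the only further contributions come from rank functionals, whose coefficients are all~$1$. Everything else --- the reduction to products of same‑type snakes, the structural properties of the product decomposition, and the $v_i$/$h_j$ verifications --- is a direct transcription of the proof of Theorem~6.7 combined with the shear lemma, and the explicit form $b+\ell_0$ of the right‑hand sides produced shows that all complexities remain finite (indeed bounded in terms of the complexities of the factors).
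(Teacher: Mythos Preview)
Your proposal is correct and follows the paper's approach: the paper's entire proof of this theorem is the single sentence ``The above proof can be applied exactly as written to show the following theorem,'' referring back to the proof of Theorem~6.7, and you have carried this out in detail, supplying the shear lemma that makes the strong hyperplane verification explicit. One small simplification: at the $l$-turning set you produce two witnessing hyperplanes (one sheared forward from $V_r$, one backward from $W_{s+1}$), but in fact the single equation $\mathrm{rk}(\mathbf{x}')+\mathrm{rk}(\mathbf{y}')=\mathrm{rk}(\mathbf{x}_e)+\mathrm{rk}(\mathbf{y}_s)$ already has integer coefficients, rational constant, and coefficient $+1$ on \emph{both} $x_p$ and $x_q$ simultaneously, since moving $\mathbf{x}'$ along $e_p$ (respectively $\mathbf{y}'$ along $e_q$) stays on the same chain of the first (respectively second) snake and hence leaves $\mathbf{x}_e$ (respectively $\mathbf{y}_s$) unchanged.
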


\begin{cor}
An axis-parallel cuboid with rational vertices has a geometric symmetric chain decomposition with the strong hyperplane condition.
\end{cor}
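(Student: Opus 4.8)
The plan is to exhibit the cuboid as an iterated product of intervals and then invoke Theorem 6.8. Write the axis-parallel cuboid with rational vertices as $C=\prod_{i=1}^{m}[a_i,b_i]$, where each $a_i\le b_i$ is rational. I will first treat a single interval, then bootstrap to the product by induction, carefully tracking the hypothesis that all snakes involved are real.

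First I would describe a geometric symmetric chain decomposition of a single rational interval $I=[a,b]\subseteq\mathbb{R}^{1}$ with the strong hyperplane condition. Take one swipe in the unique coordinate direction $e_1$, with starting turning set $\{a\}$ and ending turning set $\{b\}$; since $a$ is carried to $b$ by translation through the non-negative multiple $(b-a)e_1$, this is a legitimate swipe, and the swipe is precisely the segment $[a,b]=I$. A single point is a $0$-dimensional partial simplex, a one-term sequence of swipes is vacuously compatible and non self-intersecting, so this is a real snake covering $I$. It is symmetric: the rank of the polytope $I$ is $a+b$ (the sum of its largest and smallest point-ranks), which equals the rank sum of the corresponding pair of vertices $a$ and $b$. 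The strong hyperplane condition holds because the turning set $\{a\}$ lies on the hyperplane $x_1=a$, which has the form $\sum a_jx_j=b$ with single coefficient $a_1=1=\pm1$ and right-hand side $a\in\mathbb{Q}$ (of complexity the denominator of $a$), and likewise for $\{b\}$. In particular, $I$ has such a decomposition in which every snake is real.

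Next I would induct on $m$, the case $m=1$ being the previous paragraph. For the step, write $C=[a_1,b_1]\times C'$ with $C'=\prod_{i=2}^{m}[a_i,b_i]$. By the inductive hypothesis $C'$ carries a geometric symmetric chain decomposition with the strong hyperplane condition all of whose snakes are real, and the same is true of $[a_1,b_1]$. Applying Theorem 6.8 (the all-real case) then gives a geometric symmetric chain decomposition of $C$ with the strong hyperplane condition. Crucially, the construction in the proof of Theorem 6.8 forms the snakes of $C$ out of products of chains of $[a_1,b_1]$ with chains of $C'$: a product of two closed chains is a closed rectangle, which is then cut into closed chains, so every snake of $C$ is again real. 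This keeps the induction running, and after $m-1$ steps we obtain the claimed decomposition of $C$. Degenerate factors $a_i=b_i$ are harmless since a single point counts as a closed chain with $k=0$.

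The proof is mostly bookkeeping, and the one point demanding care is exactly the invariant just highlighted: Theorem 6.8 is stated only as producing a decomposition with the strong hyperplane condition, so to iterate it one must check that it also preserves the ``all snakes real'' hypothesis. This follows at once from the product construction, but I would state it explicitly rather than leave it implicit. No genuinely hard step remains beyond verifying these routine compatibilities.
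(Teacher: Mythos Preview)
Your proposal is correct and matches the paper's intended argument: the corollary is stated immediately after Theorem~6.8 with no proof, the implication being that a rational axis-parallel cuboid is a product of rational intervals, each of which is a single real snake satisfying the strong hyperplane condition, so Theorem~6.8 applies iteratively. Your explicit check that the product construction preserves the ``all snakes real'' hypothesis is a detail the paper leaves implicit but which is indeed needed to iterate, and your verification of it is sound.
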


One might think that given two polytopes $P,Q$ with geometric symmetric chain decompositions satisfying the strong hyperplane condition, the product $P\times Q$ has such a decomposition, but this is not always the case. This is surprising, as given a symmetric chain decomposition of $P(n)$ and of $Q(n)$, we get an induced decomposition of $(P \times Q)(n)$ given by decomposing the products of the symmetric chains in a ``consistent way'' as in Theorem 6.7. However, in certain cases there is an obstruction to even creating a symmetric chain decomposition of $P\times Q$, which happens when we try to decompose the product of an open chain with a closed chain. Indeed, we see this phenomenon happen in Example 6.10.

\begin{exmp}
Let $P$ be the boundary of the unit square in $\mathbb{R}^2$ with vertices $(0,0)$, $(0,1)$, $(1,1)$, $(1,0)$, and $Q$ be the unit line segment $[0,1] \subseteq \mathbb{R}$. Then $P$ and $Q$ have geometric symmetric chain decompositions satisfying the strong hyperplane condition, but their product does not even have a symmetric chain decomposition.

\begin{proof}
Clearly the boundary of a unit square has a geometric symmetric chain decomposition with the strong hyperplane condition given by one closed chain and one open chain, and the unit line segment has one trivially as well. The product polytope $P \times Q$ is given by the boundary of the unit cube in $\mathbb{R}^3$ with the interior of the top and bottom faces omitted. Assume $P\times Q$ has a symmetric chain decomposition. Consider the vertex $(0,0,1)$. It has rank $1$, so there must be a chain which passes through it that attains rank at least 2. The part of the chain which is past $(0,0,1)$ either follows one side of the top boundary square, or the other. Suppose it contains the line segment from $(0,0,1)$ to $(0,1,1)$. A chain which contains a point of the form $(\epsilon,0,1)$ for $\epsilon$ very small must contain the line segment from $(\epsilon,0,1)$ to $(1-\epsilon,0,1)$. As the point $(0,0,1)$ was already used in a chain, the only way for this to happen for all arbitrarily small $\epsilon$ is for there to be an open chain from $(0,0,1)$ to $(1,0,1)$. But then the point $(1,0,1)$ must be used in a different chain, and following it backwards, it must zig-zag inside the rectangle with vertices $(0,0,0), (1,0,0), (0,0,1),(1,0,1)$ until it hits rank 1 at the point $(1-t,0,t)$ for some $0\le t<1$. But then any point $(1-s,0,s)$ with $t<s< 1$ cannot be extended to rank $2$ without intersecting an existing chain, contradiction!
\end{proof}
\end{exmp}

\section{Concluding Remarks}
As we are working with lattice points inside polytopes, the theory of Ehrhart polynomials is obviously applicable to our situations --- in particular, such tools could be of great use in finding geometric symmetric chain decompositions with the strong hyperplane condition, e.g. by looking at numbers of lattice points on rank hyperplane slices in more detail than was done in Section 6 (which only took into count the highest order information).

Also, given the obstruction to decomposing products of open chains with closed chains, it would be interesting to know if products such as $L(3) \times L(3)$, $L(3)\times L(4)$, or $L(4) \times L(4)$ have geometric symmetric chain decompositions (or even symmetric chain decompositions), as the only known decompositions of each factor contain both open and closed chains. It would be interesting as well to see our framework expanded to include ``zigzag'' moves, repeatedly alternating coordinate directions to accomplish a diagonal movement.

Finally, it would be interesting to extend the methods used in this paper to prove results about possibly weighted versions of $L(m)$ for $m \ge 5$. As the dimension increases, the technical obstruction becomes not only ensuring the hyperplane conditions are satisfied, but also that no self-intersection occurs when we cone off low dimensional shells. In fact, for $m \ge 6$, we can find at most $\frac{m+1}{2}$ points inside $L(m)$ that we can project from due to the geometrical constraint that the smallest faces two projection points are contained in have no common vertices. Because of this, we do not know if there is a (geometric) symmetric chain decomposition of the polytope $L(m)$ for $m \ge 6$. An alternative direction to explore would be to try and find points to cone off particularly nice low dimensional shells (like the analogue of the two dimensional shells that appear in this paper in higher dimensions) to produce interesting families of polytopes with various types of decompositions.

\end{document}